\newtheorem{theorem}{Theorem}[section]
\newtheorem{conjecture}[theorem]{Conjecture}
\newtheorem{corollary}[theorem]{Corollary}
\newtheorem{proposition}[theorem]{Proposition}
\newtheorem{problem}[theorem]{Problem}
\newtheorem{lemma}[theorem]{Lemma}
\theoremstyle{definition}
\newtheorem{definition}[theorem]{Definition}
\theoremstyle{remark}
\newtheorem{remark}[theorem]{Remark}
\newtheorem{example}[theorem]{Example}
\newcommand{\dist}{\operatorname{dist}}
\newcommand{\diam}{\operatorname{diam}}
\newcommand{\NN}{\mathbb{N}_0}
\newcommand{\CC}{\mathbb{C}}
\let\ol\overline
\newcommand{\card}{\operatorname{card}}
\numberwithin{equation}{section}
\renewcommand{\p@enumii}{}
\date{}
\journal{}
\begin{document}

\begin{frontmatter}
\title{Uniqueness of best proximity pairs and rigidity of semimetric spaces}

\author[1]{Oleksiy Dovgoshey\corref{cor1}}
\ead{oleksiy.dovgoshey@gmail.com}

\author[2]{Ruslan Shanin}
\ead{ruslanshanin@gmail.com}

\address[1]{Department of Theory of Functions, Institute of Applied Mathematics and Mechanics of NASU, Dobrovolskogo str. 1, Slovyansk 84100, Ukraine and Institut f\"{u}r Mathematik Universit\"{a}t zu L\"{u}beck, Ratzeburger Allee 160, D-23562 L\"{u}beck, Deutschland}

\address[2]{Department of Mathematical Analysis, Odesa I.~I.~Mechnikov National University, Dvoryanskaya str., 2, Odesa 65082, Ukraine}

\cortext[cor1]{Corresponding author}

\begin{abstract}
For arbitrary semimetric space \((X, d)\) and disjoint proximinal subsets \(A\), \(B\) of \(X\) we define the proximinal graph as a bipartite graph with parts \(A\) and \(B\) whose edges \(\{a, b\}\) satisfy the equality \(d(a, b) = \dist(A, B)\). We characterize the semimetric spaces whose proximinal graphs have at most one edge and the semimetric spaces whose proximinal graphs have the vertices with degree at most \(1\) only. This allows us to describe the necessary and sufficient conditions for uniqueness of the best proximity pairs and best approximations.
\end{abstract}

\begin{keyword}
Best proximity pair \sep best approximation \sep bipartite graph \sep proximinal set \sep rigidness of semimetric spaces

\MSC[2020] Primary: 05C60 \sep Secondary: 54E35 \sep 41A50
\end{keyword}
\end{frontmatter}

\section{Introduction}

Let \(X\) be a set. A \emph{semimetric} on \(X\) is a function \(d\colon X \times X \to [0, \infty)\) such that \(d(x, y) = d(y, x)\) and \((d(x, y) = 0) \Leftrightarrow (x = y)\) for all \(x\), \(y \in X\). A pair \((X, d)\), where \(d\) is a semimetric on \(X\), is called a \emph{semimetric space} (see, for example, \cite[p.~7]{Blumenthal1953}). A semimetric \(d\) is a \emph{metric} if the \emph{triangle inequality}
\[
d(x, y) \leqslant d(x, z) + d(z, y)
\]
holds for all \(x\), \(y\), \(z \in X\). In this paper, we only consider the nonempty semimetric and metric spaces.

The following definition is well-known for the case of metric spaces. See, for example, Definition~2.1 in~\cite{Sin1974}.

\begin{definition}\label{d1.1}
Let \((X, d)\) be a semimetric space. A set \(A \subseteq X\) is said to be \emph{proximinal} in \((X, d)\) if, for every \(x\in X\), there exists \(a_0 = a_0(x) \in A\) such that
\begin{equation*}
d(x,a_0) = \inf\{d(x, a)\colon a\in A\}.
\end{equation*}
The point \(a_0\) is called a \emph{best approximation} to \(x\) in \(A\).
\end{definition}

\begin{remark}
In~\cite{Sin1974} Ivan Singer wrote: ``The term <<proximinal>> set (a combination of <<proximity>> and <<minimal>>) was proposed by R. Killgrove and used first by R. R. Phelps \cite{Phe1957PotAMS}.''
\end{remark}

For nonempty subsets \(A\) and \(B\) of a semimetric space \((X,d)\), we define a distance from \(A\) to \(B\) as
\begin{equation}\label{e1.2}
\dist(A,B) := \inf\{d(a,b)\colon a\in A\ \text{and}\ b\in B\}.
\end{equation}
If \(A\) is a one-point set, \(A = \{a\}\), then we write \(\dist(a, B)\) instead of \(\dist(\{a\}, B)\).

The next is a semimetric modification of Definition~1.1 from~\cite{ref12}.

\begin{definition}\label{d1.2}
Let \((X, d)\) be a semimetric space, and let \(A\), \(B \subseteq X\) be nonempty. A pair \((a_0, b_0) \in A \times B\) is called a \emph{best proximity pair} for the sets \(A\) and \(B\) if \(d(a_0, b_0) = \dist(A, B)\).
\end{definition}

Some results connected with existence of the best approximations and the best proximity pairs in metric spaces can be found in \cite{ref9,ref10,ref11, ref12, Sch1985, Phe1957PotAMS, CDL2021pNUAA, SLA2020IJoMaMS, SV2017AGT, Sin1974, CDL2021a}. The purpose of the present paper is to find conditions for the uniqueness of best proximity pairs and best approximations in semimetric spaces. In particular, Theorem~\ref{t4.5}, from Section~\ref{sec5} of the paper, provides the necessary and sufficient four-points conditions on a semimetric space \((X, d)\) under which, for any two disjoint proximinal \(A\), \(B \subseteq X\) there is at most one best proximity pair \((a_0, b_0) \in A \times B\). Moreover, in Theorem~\ref{t4.2} of Section~\ref{sec4} we characterize the semimetric spaces whose points have exactly one best approximation in each proximinal subspace.

A more detailed description of the results of the paper will be given in the next section after introduction of the relevant terminology.

\section{Preliminaries}

We will use some concepts from Graph Theory to formulate the results of the paper. For the convenience of the reader, these concepts are recalled below. The section also contains the definitions of some classes of ``rigid'' semimetric spaces related to the uniqueness of the best proximity pairs and best approximations, and the definition of the so-called weak similarities used in the formulation of Theorem~\ref{t4.5}.

A \emph{simple graph} is a pair \((V, E)\) consisting of a nonempty set \(V\) and a set \(E\) whose elements are unordered pairs \(\{u, v\}\) of different elements \(u\), \(v \in V\). For brevity, we will say that \(G\) is a graph if \(G\) is a simple graph.

For a graph \(G = (V, E)\), the sets \(V = V (G)\) and \(E = E(G)\) are called the \emph{set of vertices} and the \emph{set of edges}, respectively. A graph whose edge set is empty is called a \emph{null graph}. Two vertices \(u\), \(v \in V\) are \emph{adjacent} if \(\{u, v\}\) is an edge in \(G\). The \emph{degree} of a vertex \(v_0\) in a graph \(G\), denoted \(\deg(v_0) = \deg_{G} (v_0)\), is the number of all vertices which are adjacent with \(v_0\) in \(G\). A graph in which each pair of distinct vertices are adjacent is a \emph{complete graph}. We will denote the complete graph with a vertex set \(X\) by \(K_{|X|}\) (cf.~\cite[p.~17]{Wil1996}).

A graph \(H\) is, by definition, a \emph{subgraph} of a graph \(G\) if the inclusions \(V (H) \subseteq V (G) \) and \(E(H) \subseteq E(G)\) hold.

A graph \(G\) is \emph{finite} if \(V (G)\) is a finite set, \(|V (G)| < \infty\). We will consider graphs having the vertex sets of arbitrary cardinality.

If \(\{G_i \colon i \in I\}\) is a nonempty family of graphs, then the \emph{union} of the graphs \(G_i\), \(i \in I\), is a graph \(G^*\) such that
\[
V(G^*) = \bigcup_{i \in I} V(G_i) \quad \text{and} \quad E(G^*) = \bigcup_{i \in I} E(G_i).
\]
The union \(G^*\) is \emph{disjoint} if \(V(G_{i_1}) \cap V(G_{i_2}) = \varnothing\) holds for all different \(i_1\), \(i_2 \in I\).

\begin{definition}\label{d1.4}
A graph \(G\) is \emph{bipartite} if the vertex set \(V(G)\) can be partitioned into two nonvoid disjoint sets, or \emph{parts}, in such a way that no edge has both ends in the same part. A bipartite graph in which every two vertices from different parts are adjacent is called \emph{complete bipartite}.
\end{definition}

An important subclass of complete bipartite graphs is formed by the so-called stars. We shall say that a graph \(S\) is a \emph{star} if \(|V(S)| \geqslant 2\) and there is a vertex \(c \in V(S)\), the \emph{center} of \(S\), such that \(S\) is complete bipartite with the parts \(\{c\}\) and \(V(S) \setminus \{c\}\). We will use the concept of stars at the end of Section~\ref{sec4} of the paper.

\begin{definition}\label{d1.5}
A bipartite graph \(G\) with fixed parts \(A\) and \(B\) is \emph{proximinal} if there exists a semimetric space \((X, d)\) such that \(A\) and \(B\) are disjoint proximinal subsets of \(X\), and the equivalence
\begin{equation}\label{d1.5:e1}
\bigl(\{a, b\} \in E(G)\bigr) \Leftrightarrow \bigl(d(a, b) = \dist(A, B)\bigr)
\end{equation}
is valid for every \(a \in A\) and every \(b \in B\). In this case we write \(G = G_X(A, B) = G_{X, d}(A, B)\) and say that \(G\) is proximinal for \((X, d)\).
\end{definition}

Investigations of proximinal graphs were started in~\cite{CDL2021a}.

Let us recall now the fundamental concept of graph isomorphism.

\begin{definition}\label{d1.6}
Let \(G_1\) and \(G_2\) be simple graphs. A bijection \(f \colon V(G_1) \to V(G_2)\) is an \emph{isomorphism} of \(G_1\) and \(G_2\) if
\[
(\{u, v\} \in E(G_1)) \Leftrightarrow (\{f(u), f(v)\} \in E(G_2))
\]
is valid for all \(u\), \(v \in V(G_1)\). Two graphs are \emph{isomorphic} if there exists an isomorphism of these graphs.
\end{definition}

We also need the notion of \emph{digraph isomorphism}. Following~\cite{CL1996} we shall say that a \emph{digraph} \(D\) is a nonempty set \(V(D)\) of \emph{vertices} together with a (possible empty) set \(E(D)\) of ordered pairs of distinct vertices of \(D\) called \emph{arcs}.

A digraph \(D_1\) is isomorphic to a digraph \(D_2\) if there exists a bijection \(f \colon V(D_1) \to V(D_2)\) such that \((u, v) \in E(D_1)\) if and only if \((f(u), f(v)) \in E(D_2)\).

\begin{definition}\label{d1.9}
Let \((X, d)\) be a finite semimetric space with \(|X| \geqslant 2\). Then we denote by \(Di_{X}\) a digraph with \(V(Di_{X}) = E(K_{|X|})\), where \(K_{|X|}\) is the complete graph with the vertex set \(X\), and such that, for any \(u = \{p, q\} \in E(K_{|X|})\) and \(v = \{l, m\} \in E(K_{|X|})\), the relationship
\[
(u, v) \in E(Di_{X})
\]
holds if and only if \(d(p, q) > d(l, m)\) and, for every \(\{x, y\} \in E(K_{|X|})\), the double inequality
\[
d(p, q) \geqslant d(x, y) \geqslant d(l, m)
\]
implies either \(\{x, y\} = u\) or \(\{x, y\} = v\).
\end{definition}

We will use the notion of isomorphic \(Di_X\) in the case $|X| = 4$ to formulate Theorem~\ref{t4.5}.

\begin{remark}\label{r1.9}
Let \((X, d)\) be a finite semimetric space with $|X| \geqslant 2$. Let us define a partial order \(\preccurlyeq_d\) on the set \(E(K_{|X|})\) such that
\[
\bigl(\{p, q\} \prec_d \{l, m\}\bigr) \Leftrightarrow \bigl(d(p, q) < d(l, m)\bigr).
\]
Then \(Di_X\) is the \emph{Hasse diagram} of the poset \((E(K_{|X|}), {\preccurlyeq}_d)\). The definition of Hasse diagrams can be found, for example, in \cite{Schr2003}, page~7.
\end{remark}

Here is an example to illustrate some of concepts introduced above.

\begin{example}\label{ex1.10}
Let \(X = \{p, q, l, m\}\) be the four-point subset of the complex plane \(\CC\) depicted in Figure~\ref{fig1}. Write \(A = \{p, q\}\), \(B = \{l, m\}\) and let \(d\) be the restriction of the usual Euclidean metric on \(X \times X\).
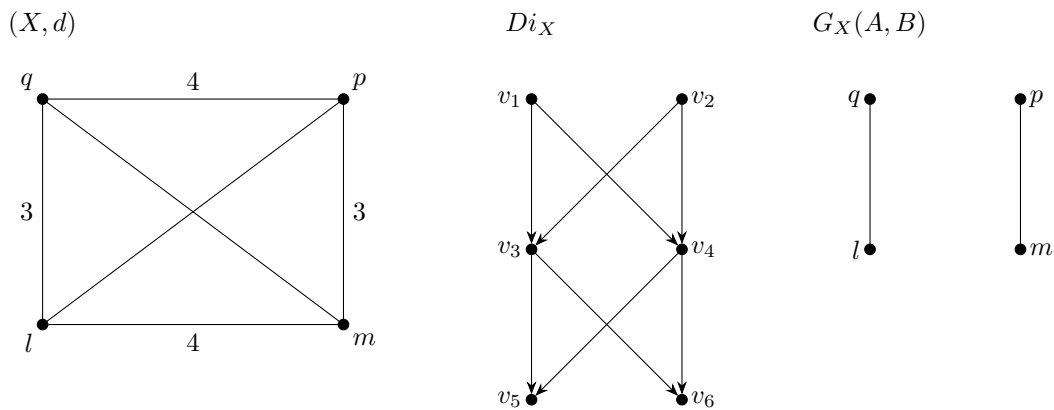
\begin{figure}[ht]
\centering
\begin{tikzpicture}[scale=1,
arrow/.style = {-{Stealth[length=5pt]}, shorten >=2pt}]
\coordinate [label=below left:{$l$}] (A) at (-2, 0);
\coordinate [label=above left:{$q$}] (B) at (-2, 3);
\coordinate [label=above right:{$p$}] (C) at (2, 3);
\coordinate [label=below right:{$m$}] (D) at (2, 0);
\draw [fill, black] (A) circle (2pt);
\draw [fill, black] (B) circle (2pt);
\draw [fill, black] (C) circle (2pt);
\draw [fill, black] (D) circle (2pt);
\draw (A) -- node [left] {\(3\)} (B) -- node [above] {\(4\)} (C) -- node [right] {\(3\)} (D) -- node [below] {\(4\)} (A);
\draw (A) -- (C);
\draw (B) -- (D);
\draw (-2, 3) node[yshift=1cm] {\((X, d)\)};

\begin{scope}[xshift=5.5cm, yshift=3cm]
\def\xx{1cm}
\def\yy{-2cm}
\def\dr{2pt}
\coordinate [label= left:{$v_1$}] (v1) at (-\xx, 0);
\coordinate [label= left:{$v_3$}] (v3) at (-\xx, \yy);
\coordinate [label= left:{$v_5$}] (v5) at (-\xx, 2*\yy);
\coordinate [label= right:{$v_2$}] (v2) at (\xx, 0);
\coordinate [label= right:{$v_4$}] (v4) at (\xx, \yy);
\coordinate [label= right:{$v_6$}] (v6) at (\xx, 2*\yy);
\draw [fill, black] (v1) circle (\dr);
\draw [fill, black] (v2) circle (\dr);
\draw [fill, black] (v3) circle (\dr);
\draw [fill, black] (v4) circle (\dr);
\draw [fill, black] (v5) circle (\dr);
\draw [fill, black] (v6) circle (\dr);
\draw [arrow] (v1) -- (v3);
\draw [arrow] (v1) -- (v4);
\draw [arrow] (v2) -- (v3);
\draw [arrow] (v2) -- (v4);
\draw [arrow] (v3) -- (v5);
\draw [arrow] (v3) -- (v6);
\draw [arrow] (v4) -- (v5);
\draw [arrow] (v4) -- (v6);
\draw (-\xx, 0) node[yshift=\xx] {\(Di_{X}\)};
\end{scope}
\begin{scope}[xshift=10cm, yshift=3cm]
\def\xx{1cm}
\def\yy{-2cm}
\def\dr{2pt}
\coordinate [label= left:{$q$}] (v1) at (-\xx, 0);
\coordinate [label= left:{$l$}] (v3) at (-\xx, \yy);
\coordinate [label= right:{$p$}] (v2) at (\xx, 0);
\coordinate [label= right:{$m$}] (v4) at (\xx, \yy);
\draw [fill, black] (v1) circle (\dr);
\draw [fill, black] (v2) circle (\dr);
\draw [fill, black] (v3) circle (\dr);
\draw [fill, black] (v4) circle (\dr);
\draw (v1) -- (v3);
\draw (v2) -- (v4);
\draw (-\xx, 0) node[yshift=\xx] {\(G_X(A, B)\)};
\end{scope}
\end{tikzpicture}
\caption{The space \((X, d)\), its digraph \(Di_X\), and the proximinal graph \(G_X(A, B)\) for \(A = \{p, q\}\) and \(B = \{l, m\}\).}\label{fig1}
\end{figure}

Then we have:
\begin{itemize}
\item \(\dist (A, B) = 3\);
\item \(V(Di_{X}) = \{v_1, \ldots, v_6\}\),
\end{itemize}
where \(v_1 = \{p, l\}\), \(v_2 = \{q, m\}\), \(v_3 = \{p, q\}\), \(v_4 = \{l, m\}\), \(v_5 = \{m, p\}\), \(v_6 = \{q, l\}\);
\begin{itemize}
\item \(E(Di_{X}) = \bigl\{(v_1, v_3), (v_1, v_4), (v_2, v_3), (v_2, v_4), (v_3, v_5), (v_3, v_5), (v_4, v_5), (v_4, v_6)\bigr\}\);
\item \(V(G_X(A, B)) = X\) and \(E(G_X(A, B)) = \bigl\{\{q, l\}, \{p, m\}\bigr\}\).
\end{itemize}
\end{example}

For every semimetric space \((X, d)\), we denote by \(D(X)\) the set of all nonzero distances between points of the set \(X\),
\[
D(X) = \bigl\{d(x, y) \colon x \neq y \text{ and } x, y \in X\bigr\}.
\]

The following definition is an equivalent form of Definition~1.1 from~\cite{DP2013AMH}.

\begin{definition}\label{d1.11}
Let \((X, d)\) and \((Y, \rho)\) be semimetric spaces with \(|X|\), \(|Y| \geqslant 2\). A mapping \(\Phi \colon X \to Y\) is a \emph{weak similarity} of \((X, d)\) and \((Y, \rho)\) if \(\Phi\) is bijective and there is a bijective strictly increasing function \(\psi \colon D(Y) \to D(X)\) such that the equality
\begin{equation*}
d(x, y) = \psi\left(\rho\bigl(\Phi(x), \Phi(y)\bigr)\right)
\end{equation*}
holds for all \(x\), \(y \in X\).

We say that two semimetric spaces are \emph{weakly similar} is there is a weak similarity of these spaces.
\end{definition}

Definition~\ref{d1.11} generalizes the concept of the similarity of spaces. Let \((X, d)\) and \((Y, \rho)\) be semimetric spaces. A bijective mapping \(\Phi \colon X \to Y\) is a \emph{similarity}, if there is a strictly positive number \(r\), the \emph{ratio} of \(\Phi\), such that
\[
\rho\bigl(\Phi(x), \Phi(y)\bigr) = rd(x, y)
\]
for all \(x\), \(y \in X\) (see, for example, \cite[p.~45]{Edg1992} for metric case). We will say that \((X, d)\) and \((Y, \rho)\) are \emph{isometric} and \(\Phi \colon X \to Y\) is an \emph{isometry} of \((X, d)\) and \((Y, \rho)\) if \(\Phi\) is a similarity with the ratio \(r = 1\).

Some questions connected with the weak similarities were studied in \cite{DovBBMSSS2020, DLAMH2020, Dov2019IEJA, BDS2021pNUAA}. The weak similarities of finite ultrametric and semimetric spaces were considered by E.~Petrov in~\cite{Pet2018pNUAA}.
\medskip

Let us introduce now the classes of semimetric spaces which will be used future.

\begin{definition}\label{d1.0}
A semimetric space \((X, d)\) is said to be \emph{strongly rigid} if \(d(x, y) = d(u, v)\) and \(x \neq y\) imply \(\{x, y\} = \{u, v\}\) for all \(x\), \(y\), \(u\), \(v \in X\). We will denote by \(\mathbf{SR}\) (Strong Rigidity) the class of all strongly rigid semimetric spaces.
\end{definition}

Some properties of strongly rigid metric spaces are described in \cite{Martin1977, DLAMH2020, BDKP2017AASFM, Janos1972}.

\begin{definition}\label{d3.1}
A semimetric space \((X, d)\) is \emph{weakly rigid} if every three-point subspace of \((X, d)\) is strongly rigid. We will denote by \(\mathbf{WR}\) (Weak Rigidity) the class of all weakly rigid semimetric spaces.
\end{definition}

\begin{definition}\label{d2.11}
A semimetric space \((X, d)\) belongs to the class \(\mathbf{UBPP}\) (Unique Best Proximity Pair) if for any two disjoint proximinal subsets \(A\) and \(B\) of \(X\) there is at most one best proximity pair \((a_0, b_0) \in A \times B\). We will say that \((X, d)\) is an \(\mathbf{UBPP}\)-space if \((X, d) \in \mathbf{UBPP}\).
\end{definition}

To the best of our knowledge, the classes \(\mathbf{WR}\) and \(\mathbf{UBPP}\) have not previously been considered either in the Fixed Point Theory or in the Distance Geometry.

The results of the paper are presented as follows.

A characterization of graphs which are proximinal for \(\mathbf{SR}\)-spaces and \(\mathbf{UBPP}\)-spaces is obtained in Theorem~\ref{t3.2}. In Example~\ref{ex2.5} we construct a semimetric space belonging to \(\mathbf{UBPP} \setminus \mathbf{SR}\). The proximinal graphs of \(\mathbf{WR}\)-spaces are characterized in Theorem~\ref{t4.1}. Theorem~\ref{t4.2} shows, in particular, that a semimetric space \((X, d)\) is weakly rigid iff every \(x \in X\) has exactly one best approximation in every proximinal subspace of \((X, d)\). A four-point \((X, d) \in \mathbf{WR} \setminus \mathbf{UBPP}\) is constructed in Example~\ref{ex3.4}. An interesting interrelation between proximinal subsets of weakly rigid semimetric spaces and ultrametric spaces is described in Proposition~\ref{p3.6}. In Lemma~\ref{l4.5} we prove that every four-point space \((Y, \rho) \in \mathbf{WR} \setminus \mathbf{UBPP}\) is weakly similar to the semimetric space from Example~\ref{ex3.4}, and this is the most technically difficult result of the paper. The final result of the paper, Theorem~\ref{t4.5}, characterizes \(\mathbf{UBPP}\)-spaces by a set of four-point conditions.

\section{Proximinal graphs for strongly rigid spaces}

Let us recall a characterization of proximinal graphs.

\begin{theorem}[\cite{CDL2021a}]\label{t3.1}
Let \(G\) be a bipartite graph with fixed parts \(A\) and \(B\). Then following statements \ref{t3.1:s1}--\ref{t3.1:s3} are equivalent.
\begin{enumerate}
\item\label{t3.1:s1} \(G\) is proximinal for a metric space.
\item\label{t3.1:s2} \(G\) is proximinal for a semimetric space.
\item\label{t3.1:s3} Either \(G\) is not a null graph or \(G\) is a null graph but \(A\) and \(B\) are infinite.
\end{enumerate}
\end{theorem}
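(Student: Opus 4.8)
The plan is to prove the chain of implications \ref{t3.1:s1} \(\Rightarrow\) \ref{t3.1:s2} \(\Rightarrow\) \ref{t3.1:s3} \(\Rightarrow\) \ref{t3.1:s1}. The first implication is immediate, since every metric space is a semimetric space, so the semimetric space witnessing proximinality for a metric already witnesses it in the semimetric sense. For \ref{t3.1:s2} \(\Rightarrow\) \ref{t3.1:s3} I would argue by contraposition: suppose \(G\) is a null graph and at least one part, say \(A\), is finite. If \(G\) were proximinal for some semimetric \((X, d)\), then \(A\), \(B\) are disjoint proximinal subsets and no \(\{a, b\}\) with \(a \in A\), \(b \in B\) realizes \(\dist(A, B)\). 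Take any \(a_0 \in A\). By proximinality of \(B\), there is a best approximation \(b_0 \in B\) to \(a_0\), so \(d(a_0, b_0) = \dist(a_0, B)\). Doing this for each of the finitely many points of \(A\) and then taking the point \(a_0\) minimizing \(\dist(a_0, B)\) over the finite set \(A\), I get \(d(a_0, b_0) = \min_{a \in A} \dist(a, B) = \dist(A, B)\), so \(\{a_0, b_0\} \in E(G)\), contradicting that \(G\) is null. (The finiteness of \(A\) is exactly what lets the infimum over \(A\) be attained.)

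The substantive direction is \ref{t3.1:s3} \(\Rightarrow\) \ref{t3.1:s1}: given such a \(G\) on parts \(A\), \(B\), construct a metric \(d\) on \(X := A \cup B\) realizing \(G\) as its proximinal graph with \(A\), \(B\) proximinal. I would split into two cases. \textbf{Case 1: \(G\) is a null graph with \(A\), \(B\) both infinite.} Here I want \(\dist(A, B)\) to be an infimum never attained. One natural attempt: fix enumerations and set \(d(a, b)\) to be, say, \(1 + 2^{-n} + 2^{-m}\) for the \(n\)-th point of \(A\) and \(m\)-th point of \(B\) (using the countable case first, then remarking on the general cardinality), while \(d\) restricted to \(A\) and to \(B\) is something small and bounded, e.g. all cross-distances within \(A\) equal to some tiny constant and likewise within \(B\). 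Then \(\dist(A, B) = 1\) but every actual distance exceeds \(1\), so \(E(G_{X,d}(A,B)) = \varnothing\); one must check the triangle inequality (choosing the intra-part distances small enough relative to \(1\) makes this routine) and check that \(A\), \(B\) are proximinal (for \(x \in A\), \(\inf_{a \in A} d(x, a)\) is attained at \(x\) itself since that distance is \(0\); the interesting check is \(\inf_{b \in B} d(x, b)\), which by the \(2^{-m}\) term is not attained — so I must instead arrange the cross-distances so that for each \emph{fixed} \(a\) the infimum over \(B\) \emph{is} attained while the global infimum over \(A \times B\) is not; e.g. make \(d(a_n, b_m) = 1 + 2^{-n}\), independent of \(m\), so the nearest point in \(B\) to \(a_n\) is any point of \(B\), distance \(1 + 2^{-n}\), attained, yet \(\dist(A,B) = 1\) unattained). \textbf{Case 2: \(G\) is not null.} Now I want \(\dist(A, B)\) attained exactly along \(E(G)\). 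Set \(d(a, b) = 1\) if \(\{a, b\} \in E(G)\) and \(d(a, b) = 2\) otherwise; set all nonzero distances inside \(A\) and inside \(B\) equal to \(2\) (or to \(1\)?—needs care). Then \(\dist(A, B) = 1\), attained precisely on the edges of \(G\). The triangle inequality holds because all nonzero distances lie in \(\{1, 2\}\) (any such function is a metric). Proximinality: for \(x \in A\) and the infimum over \(B\), the value is \(1\) if \(x\) has a neighbor in \(B\) and \(2\) otherwise, attained in both cases; symmetrically for \(x \in B\). So \((X, d)\) is a metric space with \(G_{X,d}(A, B) = G\).

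The main obstacle I anticipate is Case 1 of the final implication — simultaneously arranging that (a) the global infimum \(\dist(A, B)\) is \emph{not} attained, yet (b) for every individual point the one-point-to-set infimum defining proximinality \emph{is} attained, and (c) the triangle inequality holds. The resolution sketched above (making \(d(a, b)\) depend only on the ``\(A\)-coordinate'' and decay to the common value along a sequence in \(A\), with all other distances safely bounded) handles the countable case; for parts of arbitrary infinite cardinality one only needs a single sequence of distinct points in \(A\) with distances to \(B\) strictly decreasing to \(\dist(A, B)\), the remaining points of \(A\) being assigned any value bounded away from the infimum, which causes no trouble. One should also double-check the degenerate sub-cases (e.g. whether \(A \cap B = \varnothing\) is compatible with everything, which it is by hypothesis, and that \(|X| \ge 2\)). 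I would also remark that, since the constructed \(d\) in every case takes only finitely many values (or values in a bounded set bounded away from \(0\)), it is automatically a metric, so no separate verification beyond the finitely-many-values observation is needed except in Case 1 where one checks that the small intra-part constant plus the cross term still satisfies all triangle inequalities.
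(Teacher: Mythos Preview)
The paper does not supply its own proof of this theorem; it is quoted as a result of \cite{CDL2021a}. So there is no in-paper proof to compare against, and I evaluate your argument on its own terms.

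Your implications \ref{t3.1:s1} \(\Rightarrow\) \ref{t3.1:s2} and \ref{t3.1:s2} \(\Rightarrow\) \ref{t3.1:s3} are correct, and Case~2 of \ref{t3.1:s3} \(\Rightarrow\) \ref{t3.1:s1} (the two-valued metric when \(E(G)\neq\varnothing\)) is correct as well.

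Case~1, however, has a genuine gap. With your repaired construction \(d(a_n,b)=1+2^{-n}\) for every \(b\in B\), you verify that \(B\) is proximinal in \(X=A\cup B\): each \(a_n\) attains its distance to \(B\). But Definition~\ref{d1.5} requires that \emph{both} \(A\) and \(B\) be proximinal. For any fixed \(b\in B\) we have \(\dist(b,A)=\inf_n(1+2^{-n})=1\), and this infimum is never attained, so \(A\) is \emph{not} proximinal in \((X,d)\). Your remark about arbitrary infinite cardinalities inherits the same defect: assigning the remaining points of \(A\) a value bounded away from the infimum does nothing to give points of \(B\) a nearest point in \(A\).

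The fix is to pair points across the two parts rather than making cross-distances depend on one coordinate only. For instance, in the countable case enumerate \(A=\{a_n\}\), \(B=\{b_n\}\), set \(d(a_n,b_n)=1+\tfrac{1}{n}\) and \(d(a_n,b_m)=2\) for \(n\neq m\), with all intra-part nonzero distances equal to \(2\). Then \(\dist(a_n,B)=1+\tfrac{1}{n}\) is attained at \(b_n\) and \(\dist(b_m,A)=1+\tfrac{1}{m}\) is attained at \(a_m\), while \(\dist(A,B)=1\) is unattained; all nonzero distances lie in \((1,2]\), so the triangle inequality holds. For general infinite \(A\), \(B\) one needs a map playing the role of this pairing for every point of each part (compare the bijection \(\Phi\colon A\to B\) and the designated point \(a^1\) used in the proof of Theorem~\ref{t3.2} in the null-graph case), not just a single decreasing sequence on one side.
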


The next theorem can be considered as a refinement of Theorem~\ref{t3.1} for the case of strongly rigid spaces.

\begin{theorem}\label{t3.2}
Let \(G\) be a bipartite graph with fixed parts \(A\) and \(B\). Then following statements \(\ref{t3.2:s1}\)--\(\ref{t3.2:s4}\) are equivalent.
\begin{enumerate}
\item \label{t3.2:s1} \(G\) is proximinal for a strongly rigid metric space.
\item \label{t3.2:s2} \(G\) is proximinal for a strongly rigid semimetric space.
\item \label{t3.2:s3} \(G\) is proximinal for an \(\mathbf{UBPP}\)-space.
\item \label{t3.2:s4} The following conditions are simultaneously fulfilled:
\begin{enumerate}
\item \label{t3.2:s4.1} The inequalities \(|E(G)| \leqslant 1\) and \(|V(G)| \leqslant \mathfrak{c}\) hold, where \(\mathfrak{c}\) is the cardinality of the continuum.
\item \label{t3.2:s4.2} If \(G\) is a null graph, then \(A\) and \(B\) are infinite.
\end{enumerate}
\end{enumerate}
\end{theorem}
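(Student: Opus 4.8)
I would prove the cycle of implications $(i) \Rightarrow (ii) \Rightarrow (iii) \Rightarrow (iv) \Rightarrow (i)$. The implication $(i) \Rightarrow (ii)$ is immediate, since every metric space is a semimetric space. For $(ii) \Rightarrow (iii)$ it suffices to check the inclusion $\mathbf{SR} \subseteq \mathbf{UBPP}$: if $(X, d)$ is strongly rigid, $A$, $B \subseteq X$ are disjoint proximinal, and $(a_0, b_0)$, $(a_1, b_1)$ are best proximity pairs for $A$, $B$, then $d(a_0, b_0) = \dist(A, B) = d(a_1, b_1)$ with $a_i \neq b_i$ (because $A \cap B = \varnothing$), so $\{a_0, b_0\} = \{a_1, b_1\}$ by strong rigidity, and as each of these two-point sets meets $A$ in exactly one point, we obtain $a_0 = a_1$ and $b_0 = b_1$.

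For $(iii) \Rightarrow (iv)$, write $G = G_{X, d}(A, B)$ with $(X, d) \in \mathbf{UBPP}$. Every edge $\{a, b\}$ of $G$ is a best proximity pair $(a, b) \in A \times B$, hence $|E(G)| \leqslant 1$; condition \ref{t3.2:s4.2} is inherited from Theorem~\ref{t3.1} applied to the semimetric space $(X, d)$. For the bound $|V(G)| \leqslant \mathfrak{c}$ I would first observe that an $\mathbf{UBPP}$-space is ``locally injective'': for each $u \in X$ the map $v \mapsto d(u, v)$ is injective on $X \setminus \{u\}$, since $d(u, v) = d(u, w)$ with $v \neq w$ both different from $u$ would make $\{u\}$ and $\{v, w\}$ disjoint proximinal subsets of $X$ with $\dist(\{u\}, \{v, w\}) = d(u, v) = d(u, w)$ and $(u, v)$, $(u, w)$ two distinct best proximity pairs. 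Fixing any $u \in X$ (the cases $|X| \leqslant 2$ being trivial), $d(u, \cdot)$ embeds $X \setminus \{u\}$ into $(0, \infty)$, so $|V(G)| \leqslant |X| \leqslant \mathfrak{c}$.

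The implication $(iv) \Rightarrow (i)$ is the constructive part. Put $X := V(G) = A \cup B$. Since $|X| \leqslant \mathfrak{c}$, the family of two-point subsets of $X$ has cardinality at most $\mathfrak{c}$, so one can prescribe all values of a symmetric function $d$ on the pairs of distinct points of $X$ injectively inside $[1, 2)$; any such $d$ is a metric (all distances lie in $[1, 2)$, whence $d(x, z) < 2 \leqslant d(x, y) + d(y, z)$) and is strongly rigid (the values are pairwise distinct). As the proximinality of $A$ and of $B$ and the proximinal graph of $(A, B)$ depend only on the cross-distances $d(a, b)$, $a \in A$, $b \in B$, the intra-distances inside $A$ and inside $B$ may be placed injectively in $(\tfrac{3}{2}, 2)$, leaving $[1, \tfrac{3}{2})$ for the cross-distances, which I would choose as follows. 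If $E(G) = \{\{a^{*}, b^{*}\}\}$, set $d(a^{*}, b^{*}) = 1$, place $d(a^{*}, b)$ for $b \neq b^{*}$ and $d(a, b^{*})$ for $a \neq a^{*}$ injectively in $(1, \tfrac{5}{4})$, and all remaining cross-distances injectively in $(\tfrac{5}{4}, \tfrac{3}{2})$; then $\dist(A, B) = 1$ is attained only at $(a^{*}, b^{*})$, and the nearest point of $A$ to each $b$ (namely $a^{*}$, or itself if $b = b^{*}$) and of $B$ to each $a$ (namely $b^{*}$) exists, so $A$, $B$ are proximinal and $G_{X, d}(A, B) = G$. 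If $G$ is a null graph, then $A$, $B$ are infinite by \ref{t3.2:s4.2}; choose countable sets $\{a_n\}_{n \geqslant 0} \subseteq A$ and $\{b_n\}_{n \geqslant 0} \subseteq B$, set $d(a_n, b_n) = 1 + 2^{-n-2}$, place $d(a, b_0)$ for $a \notin \{a_n\}$ and $d(a_0, b)$ for $b \notin \{b_n\}$ injectively in $(\tfrac{5}{4}, \tfrac{11}{8})$, and all remaining cross-distances injectively in $(\tfrac{11}{8}, \tfrac{3}{2})$; then every point has a nearest point in the opposite part, so $A$, $B$ are proximinal, while $\dist(A, B) = 1$ is approached along the pairs $(a_n, b_n)$ but never attained, so $G_{X, d}(A, B)$ is the null graph with parts $A$ and $B$. (The verification that the relevant infima are attained in each of the two schemes is a routine check of the block structure of the chosen ranges.)

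The main obstacle, I expect, is precisely this last construction: one has to guarantee strong rigidity, proximinality of \emph{both} $A$ and $B$, and the exact shape of the proximinal graph all at once, which forces the careful split of the cross-distances into ``small'' and ``large'' blocks indicated above; in the null-graph case one additionally needs $\dist(A, B)$ to be unattained, and this is exactly where the infinitude of $A$ and $B$ is indispensable, so \ref{t3.2:s4.2} cannot be dropped. By contrast, the potentially worrisome bound $|V(G)| \leqslant \mathfrak{c}$ in $(iii) \Rightarrow (iv)$ reduces, via the local-injectivity remark, to the triviality $|\RR| = \mathfrak{c}$.
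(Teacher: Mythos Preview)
Your proof is correct and follows the same cycle of implications as the paper, with the same key ideas: the inclusion $\mathbf{SR}\subseteq\mathbf{UBPP}$ for $(ii)\Rightarrow(iii)$, the local-injectivity observation $v\mapsto d(u,v)$ to bound $|X|\leqslant\mathfrak{c}$ for $(iii)\Rightarrow(iv)$, and an explicit strongly rigid metric with all values in an interval of the form $[1,2)$ (so that the triangle inequality is automatic) built from a block decomposition of the cross-distances for $(iv)\Rightarrow(i)$.

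The one noteworthy difference is in the null-graph construction. The paper first chooses a bijection (when $|A|=|B|$) or an injection (when $|A|<|B|$) $\Phi\colon A\to B$, pairs each $a\in A$ with $\Phi(a)$, and routes the unmatched points of $B$ through a single anchor $a^1\in A$; this forces a case split on whether $|A|=|B|$. Your version is leaner: you fix only countable sequences $(a_n)\subseteq A$ and $(b_n)\subseteq B$ to drive $\dist(A,B)$ down to~$1$, and route \emph{all} remaining points of $A$ and of $B$ through the anchors $b_0$ and $a_0$, respectively. This avoids the case distinction entirely and makes the proximinality check symmetric and short. In the one-edge case your construction and the paper's are essentially identical (both route every $a\in A$ to $b^*$ and every $b\in B$ to $a^*$).
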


\begin{proof}
\(\ref{t3.2:s1} \Rightarrow \ref{t3.2:s2}\). This implication is trivially valid because every strongly rigid metric is a strongly rigid semimetric.

\(\ref{t3.2:s2} \Rightarrow \ref{t3.2:s3}\). It suffices to show that the inclusion
\begin{equation}\label{t3.2:e10}
\mathbf{SR} \subseteq \mathbf{UBPP}
\end{equation}
holds.

Let us consider a semimetric space \((X, d) \notin \mathbf{UBPP}\). Then, by Definition~\ref{d1.11}, there exist disjoint proximinal sets \(A\), \(B \subseteq X\) and \((x, y)\), \((u, v) \in A \times B\) such that \(\{x, y\} \neq \{u, v\}\) and
\[
d(x, y) = d(u, v) = \dist(A, B).
\]
Hence, we have \((X, d) \notin \mathbf{SR}\) by Definition~\ref{d1.0}. Inclusion~\eqref{t3.2:e10} follows.

\(\ref{t3.2:s3} \Rightarrow \ref{t3.2:s4}\). Suppose that there is \((X, d) \in \mathbf{UBPP}\) such that \(A\) and \(B\) are disjoint proximinal subsets of \(X\) and \(G = G_{X}(A, B)\) holds. To prove \ref{t3.2:s4}, we first note that the inequality \(|E(G)| \leqslant 1\) follows from Definition~\ref{d2.11}. Moreover, if \(v_0\) is a given point of \(X\), then the mapping
\[
X \setminus \{v_0\} \ni u \mapsto d(u, v_0) \in [0, \infty)
\]
is injective. Indeed, if \(u_1\) and \(u_2\) are different points of \(X \setminus \{v_0\}\), then the sets \(\{v_0\}\) and \(\{u_1, u_2\}\) are proximinal subspaces of \((X, d)\) and, consequently, \(d(u_1, v_0) \neq d(u_2, v_0)\) follows from \((X, d) \in \mathbf{UBPP}\) by Definition~\ref{d1.11}. Hence, we have
\begin{equation}\label{t3.2:e2}
|V(K_{|X|})| \leqslant \card([0, \infty)) + 1 = \mathfrak{c} + 1 = \mathfrak{c}
\end{equation}
(see Definition~\ref{d1.9}). Since \(G\) is a subgraph of \(K_{|X|}\), the inequality \(|V(G)| \leqslant |V(K_{|X|})|\) holds. The last inequality, the inequality \(|E(G)| \leqslant 1\) and \eqref{t3.2:e2} imply \ref{t3.2:s4.1}.

Condition~\ref{t3.2:s4.2} follows from statement~\ref{t3.1:s3} of Theorem~\ref{t3.1}.

\(\ref{t3.2:s4} \Rightarrow \ref{t3.2:s1}\). Let \ref{t3.2:s4} hold. Write \(X = A \cup B\). Our goal is to construct a strongly rigid metric \(d \colon X \times X \to [0, \infty)\) such that
\begin{equation}\label{t3.2:e2.1}
G = G_{X, d}(A, B).
\end{equation}

First of all, we note that \ref{t3.2:s4.1} implies
\begin{equation}\label{t3.2:e3}
|A| \leqslant \mathfrak{c} \quad \text{and} \quad  |B| \leqslant \mathfrak{c}.
\end{equation}

Suppose that \(G\) is a null graph. Let us consider the case when \(|A| = |B|\). Then \(A\) and \(B\) are infinite by~\ref{t3.2:s4.2} and, consequently, from \(|A| = |B|\) it follows that there is a bijective mapping \(\Phi \colon A \to B\). Write
\begin{equation}\label{t3.2:e4}
D:= \left\{1 + \frac{1}{2n} \colon n \in \NN\right\},
\end{equation}
where \(\NN\) is the set of all positive integer numbers. Since \(D\) is countably infinite and \(A\) is infinite, there is a subset \(A^1\) of \(A\) such that \(|A^1| = |D|\). Let us define the subsets \(E^1\), \(E^2\) and \(E^3\) of the set \(E(K_{|X|})\) by
\begin{equation}\label{t3.2:e4.1}
E^1 := \bigl\{\{a, \Phi(a)\} \colon a \in A^1\bigr\}, \quad
E^2 := \begin{cases}
\bigl\{\{a, \Phi(a)\} \colon a \in A \setminus A^1\bigr\} & \text{if } A^1 \neq A,\\
\varnothing & \text{if } A^1 = A,
\end{cases}
\end{equation}
and
\begin{equation}\label{t3.2:e4.2}
E^3 := E(K_{|X|}) \setminus (E^1 \cup E^2).
\end{equation}
It is clear that \(E^1\), \(E^2\) and \(E^3\) are disjoint and \(E(K_{|X|}) = E^1 \cup E^2 \cup E^3\) holds. Hence, there is an injective mapping \(f \colon E(K_{|X|}) \to (1, 2]\) such that the restriction \(f|_{E^1}\) is a bijection of \(E^1\) on \(D\), and
\begin{equation}\label{t3.2:e5}
f(E^2) \subseteq \left(\frac{3}{2}, \frac{7}{4}\right], \quad \text{and} \quad f(E^3) \subseteq \left(\frac{7}{4}, 2\right].
\end{equation}
Using \eqref{t3.2:e5} and \(0 \notin D\), we see that there is a unique semimetric \(d \colon X \times X \to [0, \infty)\) satisfying
\begin{equation}\label{t3.2:e6}
d(x, y) = f(\{x, y\})
\end{equation}
for every \(\{x, y\} \in E(K)\). We claim that
\begin{itemize}
\item \(d\) is a strongly rigid metric, \(A\) and \(B\) are disjoint proximinal subsets in \((X, d)\) and \eqref{t3.2:e2.1} holds.
\end{itemize}
To prove that \(d\) is a metric on \(X\), we note that
\begin{equation}\label{t3.2:e6.1}
f(E^1) = D \subseteq \left(1, \frac{3}{2}\right]
\end{equation}
holds by \eqref{t3.2:e4}. The equality \(E(K_{|X|}) = E^1 \cup E^2 \cup E^3\) and \eqref{t3.2:e5}--\eqref{t3.2:e6.1} imply the double inequality
\begin{equation}\label{t3.2:e7}
1 < d(x, y) \leqslant 2
\end{equation}
for all different points \(x\) and \(y\) of \(X\). Now the triangle inequality easily follows from~\eqref{t3.2:e7}. Thus, the semimetric \(d\) is a metric. The metric \(d\) is strongly rigid because the mapping \(f \colon E(K_{|X|}) \to (1, 2]\) is injective.

Let us prove that \(A\) and \(B\) are proximinal subsets of \((X, d)\).

For every \(x \in A = X \setminus B\), \eqref{t3.2:e5} and \eqref{t3.2:e6.1} imply that we have
\[
d(x, \Phi(x)) \leqslant \frac{3}{2} \quad \text{and} \quad d(x, a) > \frac{3}{2}
\]
whenever \(a \in A\) and \(a \neq \Phi(x)\). Thus, \(\Phi(x)\) is the unique best approximation to \(x \in A\) in \(B\) and, consequently, \(B\) is a proximinal set in \((X, d)\). Similarly, using the inverse mapping \(\Phi^{-1} \colon B \to A\) instead of \(\Phi\), we see that
\[
d(y, \Phi^{-1}(y)) = \dist (y, A)
\]
holds for every \(y \in B = X \setminus A\). Thus, \(A\) is also a proximinal set in \((X, d)\).

The equality \(f(E^1) = D\), \eqref{t3.2:e4} and \eqref{t3.2:e7} imply that
\[
\dist(A, B) = \lim_{n \to \infty} \left(1 + \frac{1}{2n}\right) = 1.
\]
Hence, \(G_{X}(A, B)\) is a null graph by~\eqref{t3.2:e7} and Definition~\ref{d1.5}. Consequently, we have the equalities
\[
V(G) = V(G_X(A, B)) \quad \text{and} \quad E(G) = E(G_X(A, B)) = \varnothing,
\]
i.e. \eqref{t3.2:e2.1} holds.

Let us consider now the case when \(G\) is a null graph and \(|A| \neq |B|\). Assume, without loss of generality, that \(|A| < |B|\). Then there is an injective mapping \(\Phi^1 \colon A \to B\). Write
\[
D^1 := \left\{1 + \frac{1}{4n} \colon n \in \NN\right\}
\]
(cf. \eqref{t3.2:e4}). Since \(D^1\) is countably infinite and, by condition~\ref{t3.2:s4.2}, \(A\) is infinite, there is a subset \(A^{1,1}\) of \(A^1\) such that \(|D^1| = |A^{1,1}|\). Let \(a^1\) be a given point of \(A\). Write
\begin{equation*}
\begin{aligned}
E^{1,1} & := \bigl\{\{a, \Phi^1(a)\} \colon a \in A^{1,1}\bigr\}, \\
E^{2,1} & := \begin{cases}
\bigl\{\{a, \Phi^1(a)\} \colon a \in A \setminus A^{1,1}\bigr\} & \text{if } A^{1,1} \neq A\\
\varnothing & \text{if } A^{1,1} = A,
\end{cases}\\
E^{2,2} & := \bigl\{\{a^1, b\} \colon b \in B \setminus \Phi^1(A^1)\bigr\},\\
E^{3,1} & := E(K) \setminus (E^{1,1} \cup E^{2,1} \cup E^{2,2}).
\end{aligned}
\end{equation*}
It is clear that \(E^{1,1}\), \(E^{2,1}\), \(E^{2,2}\) and \(E^{3,1}\) are disjoint subsets of \(E(K_{|X|})\) and
\[
E(K) = E^{1,1} \cup E^{2,1} \cup E^{2,2} \cup E^{3,1}.
\]

Let us consider an injective mapping \(f^1 \colon E(K_{|X|}) \to (1, 2]\) such that the restriction \(f^1|_{E^{1,1}}\) is a bijection of \(E^{1,1}\) on \(D^1\) and
\[
f^1(E^{2,1}) \subseteq \left(\frac{5}{4}, \frac{6}{4}\right], \quad f^1(E^{2,2}) \subseteq \left(\frac{6}{4}, \frac{7}{4}\right], \quad f^1(E^{3,1}) \subseteq \left(\frac{7}{4}, 2\right]
\]
hold. Then there is a unique semimetric \(d^1 \colon X \times X \to [0, \infty)\) satisfying the equality
\[
d^1(x, y) = f(\{x, y\})
\]
for every \(\{x, y\} \in E(K_{|X|})\). Arguing similarly to the case \(|A| = |B|\), we can show that
\begin{itemize}
\item \((X, d^1)\) is a strongly rigid metric space, \(A\) and \(B\) are disjoint proximinal subsets in \((X, d^1)\) and \eqref{t3.2:e2.1} holds with \(d = d^1\).
\end{itemize}
Thus, the implication \(\ref{t3.2:s4} \Rightarrow \ref{t3.2:s1}\) is valid if \(G\) is a null graph.

Let us consider now the case \(E(G) \neq \varnothing\). Then, by condition~\ref{t3.2:s4.1}, there is the unique pair \((a_0, b_0)\) of points such that \(a_0 \in A\), \(b_0 \in B\) and \(\{a_0, b_0\} \in E(G)\). Let us define the subsets \(E^{*,0,0}\), \(E^{*,0,1}\), \(E^{*,1,0}\) and \(E^{*,1,1}\) of \(E(K_{|X|})\) by
\begin{equation}\label{t3.2:e8}
\begin{aligned}
E^{*,0,0} & := \bigl\{\{a_0, b_0\}\bigr\}, \\
E^{*,0,1} & := \begin{cases}
\bigl\{\{a_0, b\} \colon b \in B \setminus \{b_0\}\bigr\} & \text{if } B \neq \{b_0\}\\
\varnothing & \text{if } B = \{b_0\},
\end{cases}\\
E^{*,1,0} & := \begin{cases}
\bigl\{\{a, b_0\} \colon a \in A \setminus \{a_0\}\bigr\}& \text{if } A \neq \{a_0\}\\
\varnothing & \text{if } A = \{a_0\},
\end{cases}\\
E^{*,1,1} & := E(K) \setminus (E^{*,0,0} \cup E^{*,0,1} \cup E^{*,1,0}).
\end{aligned}
\end{equation}
These subsets of \(E(K_{|X|})\) are disjoint and \(E(K_{|X|}) = E^{*,0,0} \cup E^{*,0,1} \cup E^{*,1,0} \cup E^{*,1,1}\) holds. Let us consider an injective mapping \(f^0 \colon E(K_{|X|}) \to [1,2]\) such that
\begin{equation}\label{t3.2:e9}
\begin{aligned}
f^{0}(\{a_0, b_0\}) & = 1, &
f^{0}(E^{*,0,1}) & \subseteq \left(1, \frac{3}{2}\right), \\
f^{0}(E^{*,1,0}) & \subseteq \left[\frac{3}{2}, \frac{7}{4}\right), &
f^{0}(E^{*,1,1}) &\subseteq \left(\frac{7}{4}, 2\right].
\end{aligned}
\end{equation}
Then there is the unique semimetric \(d^{0} \colon X \times X \to [0, \infty)\) satisfying the equality
\[
d^{0}(x, y) = f(\{x, y\})
\]
for every \(\{x, y\} \in E(K_{|X|})\). As in the case \(E(G) = \varnothing\), we can prove that \((X, d^{0})\) is a strongly rigid metric space. Moreover, using \eqref{t3.2:e8} and \eqref{t3.2:e9}, we can show that the equality
\[
\dist(A, B) = d^{0}(a_0, b_0)
\]
holds and, for every \(a \in A\) and \(b \in B\), we have
\[
\dist(a, B) = d^{0}(a, b_0), \quad \dist(b, A) = d^{0}(a_0, b) \quad \text{and} \quad d(a, b) > \dist(A, B)
\]
whenever \(a \in A \setminus \{a_0\}\) and \(b \in B \setminus \{b_0\}\). Thus, \(A\) and \(B\) are proximinal in \((X, d^{0})\) and \((a_0, b_0) \in A \times B\) is the unique best proximity pair for \(A\) and \(B\) (see Definition~\ref{d1.2}). Using Definition~\ref{d1.5}, we obtain the equalities
\[
V(G) = V(G_{X, d^{0}}(A, B)) \quad \text{and} \quad E(G) = E(G_{X, d^{0}}(A, B)).
\]
Equality \eqref{t3.2:e2.1} follows with \(d = d^0\).
\end{proof}

Theorem~\ref{t3.2} implies the following.

\begin{corollary}\label{c2.6}
A graph \(G\) is not isomorphic to any proximinal graph for any strongly rigid metric space (\(\mathbf{UBPP}\)-space) if and only if \(G\) is a finite null graph or satisfies at least one from the inequalities \(|V(G)| > \mathfrak{c}\) and \(|E(G)| > 1\).
\end{corollary}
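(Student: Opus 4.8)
The plan is to prove Corollary~\ref{c2.6} by restating it as the contrapositive of (the easy direction and the characterization part of) Theorem~\ref{t3.2}, using the fact that all the conditions involved are isomorphism invariants of bipartite graphs with fixed bipartition. First I would fix a graph $G$ and ask when it fails to be isomorphic to a proximinal graph for a strongly rigid metric space. Since being a proximinal graph requires (by Definition~\ref{d1.5}) that $G$ be bipartite, any graph that admits no bipartition at all is certainly not isomorphic to such a graph; but such a $G$ automatically has $|E(G)| > 1$ (a non-bipartite graph contains an odd cycle, hence at least three edges), so it falls under the right-hand side of the claimed equivalence and there is nothing more to check in that case.

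So I would reduce to the case where $G$ is bipartite. The delicate point is that Theorem~\ref{t3.2} concerns bipartite graphs \emph{with a fixed choice of parts} $A$ and $B$, whereas Corollary~\ref{c2.6} speaks of isomorphism of abstract graphs. I would argue that $G$ is isomorphic to some proximinal graph for a strongly rigid metric space if and only if \emph{some} bipartition of $G$ into parts $A$, $B$ satisfies condition~\ref{t3.2:s4} of Theorem~\ref{t3.2}: indeed, if $f$ is an isomorphism of $G$ onto a proximinal graph $G_{X,d}(A',B')$, then $f^{-1}(A')$, $f^{-1}(B')$ is a bipartition of $G$ satisfying~\ref{t3.2:s4} (condition~\ref{t3.2:s4} is clearly preserved by isomorphisms carrying parts to parts), and conversely, by the implication $\ref{t3.2:s4}\Rightarrow\ref{t3.2:s1}$, such a bipartition makes $G$ itself proximinal for a strongly rigid metric space. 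Next I would observe that condition~\ref{t3.2:s4.1}, namely $|E(G)|\leqslant 1$ and $|V(G)|\leqslant\mathfrak{c}$, does not mention the parts, so it holds for one bipartition iff it holds for all; only condition~\ref{t3.2:s4.2} depends on the bipartition, and only when $G$ is a null graph.

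Putting this together: $G$ is \emph{not} isomorphic to a proximinal graph for a strongly rigid metric space iff ($G$ is not bipartite) or ($G$ is bipartite but $|E(G)|>1$ or $|V(G)|>\mathfrak{c}$) or ($G$ is a bipartite null graph, $|V(G)|\leqslant\mathfrak{c}$, and no bipartition of $G$ has both parts infinite). In the first two alternatives we already have $|E(G)|>1$ or $|V(G)|>\mathfrak{c}$. For the last alternative, a null graph $G$ has a bipartition with both parts infinite precisely when $|V(G)|$ is infinite, so the failure of~\ref{t3.2:s4.2} for every bipartition means $V(G)$ is finite, i.e. $G$ is a finite null graph. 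Hence the negation is exactly: $G$ is a finite null graph, or $|V(G)|>\mathfrak{c}$, or $|E(G)|>1$ --- which is the statement. The parenthetical $\mathbf{UBPP}$ version follows verbatim from the equivalence $\ref{t3.2:s1}\Leftrightarrow\ref{t3.2:s3}$ in Theorem~\ref{t3.2}. The main obstacle is the bookkeeping around the ``fixed parts vs.\ abstract isomorphism'' issue, in particular checking that a null graph on a finite vertex set genuinely admits no equipartition into two infinite parts while an infinite null graph always does; everything else is a direct translation of Theorem~\ref{t3.2}.
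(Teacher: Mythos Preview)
Your argument is correct and is exactly the explicit derivation from Theorem~\ref{t3.2} that the paper intends; the paper itself offers no separate proof, merely stating that the corollary follows from Theorem~\ref{t3.2}, and your careful bookkeeping around ``fixed parts versus abstract isomorphism'' makes precise what the paper leaves implicit. One small wrinkle worth tidying: under the paper's Definition~\ref{d1.4} a bipartition must have two \emph{nonvoid} parts, so a one-vertex graph is technically non-bipartite yet contains no odd cycle and has zero edges---your step ``non-bipartite $\Rightarrow$ odd cycle $\Rightarrow |E(G)|>1$'' therefore needs this trivial case excluded, but this does not damage the conclusion since a one-vertex graph is a finite null graph and lands on the right-hand side of the equivalence anyway.
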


The following proposition shows, in particular, that, for ``small'' semimetric spaces \((X, d)\), the conditions \((X, d) \in \mathbf{UBPP}\) and \((X, d) \in \mathbf{SR}\) are equivalent.

\begin{proposition}\label{p2.4}
Let \((X, d)\) be a semimetric space with \(|X| \leqslant 3\). Then the following conditions are equivalent:
\begin{enumerate}
\item \label{p2.4:s1} \((X, d) \in \mathbf{SR}\).
\item \label{p2.4:s2} \((X, d) \in \mathbf{WR}\).
\item \label{p2.4:s3} \((X, d) \in \mathbf{UBPP}\).
\item \label{p2.4:s4} Each proximinal graph \(G_{X, d}(A, B)\) has exactly one edge.
\end{enumerate}
\end{proposition}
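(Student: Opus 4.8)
The plan is to establish the web of implications $\ref{p2.4:s1}\Leftrightarrow\ref{p2.4:s2}$, $\ref{p2.4:s1}\Rightarrow\ref{p2.4:s3}\Leftrightarrow\ref{p2.4:s4}$ and $\ref{p2.4:s3}\Rightarrow\ref{p2.4:s1}$, which together yield all the stated equivalences; the hypothesis $|X|\leqslant 3$ will enter only in the last two of these.

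First I would dispose of $\ref{p2.4:s1}\Leftrightarrow\ref{p2.4:s2}$. The inclusion $\mathbf{SR}\subseteq\mathbf{WR}$ needs no size restriction: the restriction of a strongly rigid semimetric to any subset is again strongly rigid, so if $(X,d)\in\mathbf{SR}$ then, in particular, every three-point subspace of $(X,d)$ is strongly rigid. For the converse, a semimetric space with $|X|\leqslant 2$ has at most one nonzero distance and therefore lies in $\mathbf{SR}$ automatically, while if $|X|=3$ the space $(X,d)$ is its own unique three-point subspace, so $(X,d)\in\mathbf{WR}$ says precisely that $(X,d)\in\mathbf{SR}$.

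Next, $\ref{p2.4:s1}\Rightarrow\ref{p2.4:s3}$ is nothing but the inclusion $\mathbf{SR}\subseteq\mathbf{UBPP}$ already verified inside the proof of Theorem~\ref{t3.2}. For $\ref{p2.4:s3}\Leftrightarrow\ref{p2.4:s4}$, since $A$ and $B$ are disjoint, the edges of $G_{X,d}(A,B)$ are in bijection with the best proximity pairs of $A$ and $B$ (Definitions~\ref{d1.5} and~\ref{d2.11}), so ``exactly one edge'' trivially implies ``at most one best proximity pair''; conversely, when $X$ is finite and $A,B\subseteq X$ are disjoint and nonempty, the set $\{d(a,b)\colon a\in A,\ b\in B\}$ is finite and nonempty, hence $\dist(A,B)$ is attained and $G_{X,d}(A,B)$ has at least one edge (cf. statement~\ref{t3.1:s3} of Theorem~\ref{t3.1}), which together with $(X,d)\in\mathbf{UBPP}$ forces exactly one. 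The same attainment remark shows that over a finite semimetric space every nonempty subset is proximinal, a fact I use below.

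Finally, for $\ref{p2.4:s3}\Rightarrow\ref{p2.4:s1}$ I would argue by contraposition. If $(X,d)\notin\mathbf{SR}$, then by the first step $|X|=3$, say $X=\{x,y,z\}$, and two of the three distances $d(x,y)$, $d(y,z)$, $d(x,z)$ coincide. Because any two of the unordered pairs $\{x,y\}$, $\{y,z\}$, $\{x,z\}$ share a vertex, there are a point $c\in X$ and distinct points $a_1,a_2\in X\setminus\{c\}$ with $d(c,a_1)=d(c,a_2)$. Taking $A=\{a_1,a_2\}$ and $B=\{c\}$ produces disjoint nonempty --- hence proximinal --- subsets of $(X,d)$ with $\dist(A,B)=d(c,a_1)=d(c,a_2)$, so $(a_1,c)$ and $(a_2,c)$ are two distinct best proximity pairs for $A$ and $B$ and $(X,d)\notin\mathbf{UBPP}$. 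I expect no genuine obstacle here; the only points needing a little care are that over a finite space proximinal graphs are never null --- which is what makes conditions \ref{p2.4:s3} and \ref{p2.4:s4} collapse to the same one --- and the elementary observation on three-point spaces that turns the equality of two distances into a pair of best proximity pairs.
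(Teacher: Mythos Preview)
Your proposal is correct and follows essentially the same route as the paper: the same web of implications $\ref{p2.4:s1}\Leftrightarrow\ref{p2.4:s2}$, $\ref{p2.4:s1}\Rightarrow\ref{p2.4:s3}$, $\ref{p2.4:s3}\Leftrightarrow\ref{p2.4:s4}$, and the contrapositive closing step via the two-element/one-element partition of a three-point space. One small slip in your overview: the hypothesis $|X|\leqslant 3$ is also needed for the converse part of $\ref{p2.4:s1}\Leftrightarrow\ref{p2.4:s2}$, not only in ``the last two'' --- but your detailed argument there uses it correctly, and in fact your treatment of $\ref{p2.4:s3}\Leftrightarrow\ref{p2.4:s4}$ (pointing out that finiteness guarantees at least one edge) is more careful than the paper's, which simply declares the equivalence ``clear''.
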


\begin{proof}
It is clear from Definition~\ref{d1.5} that the equivalence \(\ref{p2.4:s3} \Leftrightarrow \ref{p2.4:s4}\) is valid. Similarly, Definition~\ref{d3.1} implies the validity if \(\ref{p2.4:s1} \Leftrightarrow \ref{p2.4:s2}\). Moreover, inclusion~\eqref{t3.2:e10} shows that the implication \(\ref{p2.4:s1} \Rightarrow \ref{p2.4:s3}\) is also valid. Thus, to complete the proof it enough to check that \ref{p2.4:s4} implies \ref{p2.4:s1}.

Suppose contrary that \(\ref{p2.4:s4}\) holds but \((X, d) \notin \mathbf{SR}\). Since \(|X| \leqslant 2\) implies that \((X, d)\) is strongly rigid, we obtain \(|X| = 3\). Write \(X = \{x_1, x_2, x_3\}\). Since \((X, d)\) is not strongly rigid, we may assume, without loss of generality, that
\begin{equation}\label{p2.4:e1}
d(x_1, x_2) = d(x_2, x_3).
\end{equation}
Let us consider the proximinal graph \(G_{X, d}(A, B)\) with \(A = \{x_1, x_3\}\) and \(B = \{x_2\}\). Equality \eqref{p2.4:e1} and \eqref{d1.5:e1} imply that \(\{x_1, x_2\}\) and \(\{x_2, x_3\}\) are edges of \(G_{X, d}(A, B)\), contrary to \ref{p2.4:s4}.
\end{proof}

The following example shows that the inequality \(|X| \leqslant 3\) cannot be replaced by \(|X| \leqslant 4\) in Proposition~\ref{p2.4}.

\begin{figure}[h]
\centering
\begin{tikzpicture}[scale=1,
arrow/.style = {-{Stealth[length=5pt]}, shorten >=2pt}]
\def\xx{0.5cm}

\coordinate [label=below left:{$-2$}] (A) at (-2*\xx, 0);
\coordinate [label=above right:{$3$}] (B) at (0, 3*\xx);
\coordinate [label=below right:{$5$}] (C) at (5*\xx, 0);
\coordinate [label=below right:{$-4$}] (D) at (0, -4*\xx);
\draw [->] (-2.5*\xx, 0) -- (5.5*\xx,0) node [label=above:{$x$}] {};
\draw [->] (0, -5*\xx) -- (0,4*\xx) node [label=left:{$y$}] {};
\draw [fill, black] (A) circle (2pt);
\draw [fill, black] (B) circle (2pt);
\draw [fill, black] (C) circle (2pt);
\draw [fill, black] (D) circle (2pt);
\draw (A) -- (B) -- (C) -- (D) -- (A);
\draw (-3*\xx, 3*\xx) node [left] {\((X, d)\)};

\begin{scope}[xshift=7cm, yshift=1cm]
\def\xx{1cm}
\def\yy{-1cm}
\def\dr{2pt}

\coordinate (v1) at (-\xx, 0);
\coordinate (v2) at (\xx, 0);
\coordinate (v3) at (0, \yy);
\coordinate (v4) at (0, 2*\yy);
\coordinate (v5) at (0, 3*\yy);
\coordinate (v6) at (0, 4*\yy);
\draw [fill, black] (v1) circle (\dr);
\draw [fill, black] (v2) circle (\dr);
\draw [fill, black] (v3) circle (\dr);
\draw [fill, black] (v4) circle (\dr);
\draw [fill, black] (v5) circle (\dr);
\draw [fill, black] (v6) circle (\dr);
\draw [arrow] (v1) -- (v3);
\draw [arrow] (v2) -- (v3);
\draw [arrow] (v3) -- (v4);
\draw [arrow] (v4) -- (v5);
\draw [arrow] (v5) -- (v6);
\draw (0, 0.5) node[above] {\(Di_X\)};
\end{scope}
\end{tikzpicture}
\caption{The quadrilateral \((X, d) \in \mathbf{UBPP} \setminus \mathbf{SR}\) and its digraph \(Di_X\).}\label{fig2}
\end{figure}
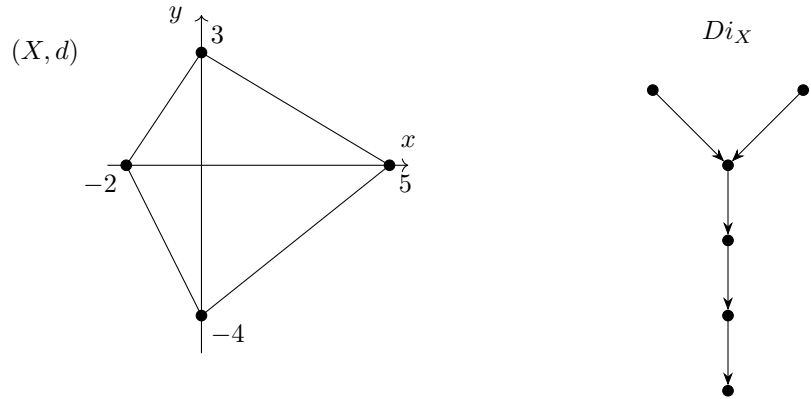

\begin{example}\label{ex2.5}
Let \(X = \{z_1, z_2, z_3, z_4\}\) be the four-point subset of the complex plane \(\CC\) with
\[
z_1 = 5 + 0i, \quad z_2 = 0 + 3i, \quad z_3 = -2 + 0i, \quad z_4 = 0 - 4i
\]
and let \(d\) be the restriction of the usual Euclidean metric on \(X \times X\). It can be proved directly that \((X, d)\) belongs to \(\mathbf{UBPP}\). The diagonals of the quadrilateral with the vertices \(z_1\), \(\ldots\), \(z_4\) are equal to \(7\). Thus, we have \((X, d) \notin \mathbf{SR}\). Furthermore, Pythagorean theorem implies that all sides of \(X\) are strictly less than \(7\) and have pairwise different lengths. Therefore, the digraph \(Di_X\) can be represented as in Figure~\ref{fig2}.
\end{example}

\begin{remark}\label{r3.6}
Lemma~\ref{l3.7} from the last section of the paper implies that each four-point semimetric space \(Y\) belongs to \(\mathbf{UBPP}\) whenever \(Di_Y\) is isomorphic to \(Di_X\) from Figure~\ref{fig2}.
\end{remark}

\section{Uniqueness of the best approximation}
\label{sec4}

In this section we will obtain characterizations of \(\mathbf{WR}\)-spaces and describe the structure of proximinal graphs of such spaces.

\begin{theorem}\label{t4.1}
Let \(G\) be a bipartite graph with fixed parts \(A\) and \(B\). Then following statements \(\ref{t3.2:s1}\)--\(\ref{t3.2:s3}\) are equivalent.
\begin{enumerate}
\item \label{t4.1:s1} \(G\) is proximinal for a weakly rigid metric space.
\item \label{t4.1:s2} \(G\) is proximinal for a weakly rigid semimetric space.
\item \label{t4.1:s3} The following conditions are simultaneously fulfilled:
\begin{enumerate}
\item \label{t4.1:s3.1} The inequality \(|V(G)| \leqslant \mathfrak{c}\) holds, where \(\mathfrak{c}\) is the cardinality of the continuum and we have the inequality \(\deg_{G}(v) \leqslant 1\) for every \(v \in V(G)\).
\item \label{t4.1:s3.2} If \(G\) is a null graph, then \(A\) and \(B\) are infinite.
\end{enumerate}
\end{enumerate}
\end{theorem}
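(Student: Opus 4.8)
The proof will follow the same three-implication pattern as Theorem~\ref{t3.2}: the trivial \(\ref{t4.1:s1}\Rightarrow\ref{t4.1:s2}\), then \(\ref{t4.1:s2}\Rightarrow\ref{t4.1:s3}\) via a cardinality count plus Theorem~\ref{t3.1}, and finally the constructive \(\ref{t4.1:s3}\Rightarrow\ref{t4.1:s1}\), which is the substantial part. The first implication holds because every weakly rigid metric space is a weakly rigid semimetric space. For \(\ref{t4.1:s2}\Rightarrow\ref{t4.1:s3}\): suppose \(G=G_{X,d}(A,B)\) with \((X,d)\in\mathbf{WR}\). The bound \(|V(G)|\le\mathfrak c\) is obtained exactly as in Theorem~\ref{t3.2}: for fixed \(v_0\in X\) the map \(u\mapsto d(u,v_0)\) on \(X\setminus\{v_0\}\) is injective, since if \(d(u_1,v_0)=d(u_2,v_0)\) with \(u_1,u_2,v_0\) distinct then the three-point subspace \(\{u_1,u_2,v_0\}\) is not strongly rigid, contradicting \((X,d)\in\mathbf{WR}\); hence \(|X|\le\mathfrak c\) and \(|V(G)|\le|V(K_{|X|})|\le\mathfrak c\). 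For the degree condition: if some \(a_0\in A\) had two neighbours \(b_1,b_2\in B\), then \(d(a_0,b_1)=\dist(A,B)=d(a_0,b_2)\), and the three-point subspace \(\{a_0,b_1,b_2\}\) would again fail to be strongly rigid; symmetrically for vertices in \(B\). Thus \(\deg_G(v)\le1\) for all \(v\). Condition~\ref{t4.1:s3.2} is immediate from statement~\ref{t3.1:s3} of Theorem~\ref{t3.1}, since a weakly rigid semimetric space is in particular a semimetric space.

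**The construction for \(\ref{t4.1:s3}\Rightarrow\ref{t4.1:s1}\).**
Write \(X=A\cup B\). A graph with \(\deg_G(v)\le1\) everywhere is a disjoint union of isolated vertices and of edges, so the edge set of \(G\) is a (possibly empty) matching \(M=\{\{a_i,b_i\}:i\in I\}\subseteq A\times B\) between a subset \(A_M\subseteq A\) and a subset \(B_M\subseteq B\). If \(G\) is a null graph this is the null-graph case, which can be handled by literally reusing the null-graph construction from the proof of Theorem~\ref{t3.2} (a strongly rigid metric is a fortiori weakly rigid, and there the matching is a perfect matching handled via \(\Phi\) or \(\Phi^1\)). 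When \(M\ne\varnothing\) the idea is: on the matched pairs the distance is forced to equal \(\dist(A,B)\), so several pairs must receive the \emph{same} value, which destroys strong rigidity — but weak rigidity only constrains three-point subspaces, and a careful value assignment keeps every triangle ``generic''. Concretely, I would fix a target value, say \(1\), put \(d(a_i,b_i)=1\) for every \(i\in I\), and assign all remaining distances by an injective map \(f\colon E(K_{|X|})\setminus M\to(1,2]\) into pairwise distinct values, with the blocks arranged (as in \eqref{t3.2:e9}) so that: \(d(a,b)>1\) for every unmatched cross-pair \(a\in A\setminus A_M\) or \(b\in B\setminus B_M\) paired with anything in the other part, every point of \(A\) has a nearest point in \(B\) and vice versa at distance exactly the infimum, and \(\dist(A,B)=1\). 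The double inequality \(1\le d(x,y)\le2\) again makes \(d\) a metric automatically. To guarantee \emph{weak rigidity} I must check that no three-point subspace \(\{x,y,z\}\) has two equal nonzero distances: the only values that repeat are the \(1\)'s on the matching edges, and two matching edges \(\{a_i,b_i\}\), \(\{a_j,b_j\}\) with \(i\ne j\) share no vertex, so they never lie in a common three-point set; all other distances are pairwise distinct by injectivity of \(f\). Hence every triangle is strongly rigid and \((X,d)\in\mathbf{WR}\). Finally, by \eqref{d1.5:e1}, the edges of \(G_{X,d}(A,B)\) are exactly the pairs at distance \(\dist(A,B)=1\), i.e.\ exactly \(M=E(G)\), so \(G=G_{X,d}(A,B)\). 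The cardinality hypothesis \(|V(G)|\le\mathfrak c\) is used precisely to ensure \(|E(K_{|X|})|\le\mathfrak c=\card((1,2])\), so such an injective \(f\) exists; condition~\ref{t4.1:s3.2} supplies the infiniteness needed in the null-graph subcase so that the infimum \(1\) is actually attained.

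**Expected main obstacle.**
The routine points are the metric and strong-rigidity-of-triangles verifications. The delicate bookkeeping is arranging the blocks of \(f\) so that proximinality holds \emph{simultaneously} for \(A\) and for \(B\) while the matched vertices have their matching partner as the unique nearest point and the \emph{unmatched} vertices still have \emph{some} nearest point in the other part at the correct distance — this is exactly the kind of interval-splitting done in \eqref{t3.2:e8}--\eqref{t3.2:e9}, and it must be set up so that, for an unmatched \(a\in A\setminus A_M\), \(\inf\{d(a,b):b\in B\}\) is attained (e.g.\ by reserving, for each unmatched \(a\), a descending sequence of its cross-distances converging to \(1\) when \(B\) is infinite, or simply letting the minimum be a genuine minimum when the relevant set is finite). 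Carrying this out cleanly in full generality — arbitrary (up to \(\mathfrak c\)) cardinalities of \(A\setminus A_M\), \(B\setminus B_M\), and \(M\), with the four sub-cases mirroring the proof of Theorem~\ref{t3.2} — is where the real work lies, but it is entirely analogous to, and no harder than, the argument already given for Theorem~\ref{t3.2}.
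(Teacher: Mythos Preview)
Your overall strategy matches the paper's exactly: the same three implications, the same injectivity-of-\(u\mapsto d(u,v_0)\) argument for \(\ref{t4.1:s2}\Rightarrow\ref{t4.1:s3}\), the same appeal to Theorem~\ref{t3.2} for the null-graph subcase, and the same core construction (value \(1\) on the matching \(M\), injective values in \((1,2]\) elsewhere, with the double inequality \(1\le d\le 2\) forcing the triangle inequality and vertex-disjointness of \(M\) giving weak rigidity).

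There is one genuine slip in the proximinality bookkeeping. Your parenthetical ``e.g.\ by reserving, for each unmatched \(a\), a descending sequence of its cross-distances converging to \(1\) when \(B\) is infinite'' is exactly backwards: if the cross-distances from an unmatched \(a\in A\setminus A_M\) accumulate at \(1\), then \(\inf_{b\in B}d(a,b)=1\) is \emph{not} attained (only matched vertices sit at distance \(1\)), so \(B\) fails to be proximinal at \(a\). What the paper does---and what the interval-splitting \eqref{t3.2:e8}--\eqref{t3.2:e9} you cite actually encodes in the single-edge case---is to fix once and for all one matched pair \((a_0^*,b_0^*)\) and place the pairs \(\{(a,b_0^*):a\in A\setminus A_M\}\) in a strictly lower block than all other remaining cross-pairs, so that \(b_0^*\) is the (unique, attained) nearest point in \(B\) for every unmatched \(a\); symmetrically the block \(\{(a_0^*,b):b\in B\setminus B_M\}\) makes \(a_0^*\) the nearest point for every unmatched \(b\). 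With that correction your sketch goes through and coincides with the paper's proof; without it, proximinality can fail.
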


\begin{proof}
\(\ref{t4.1:s1} \Rightarrow \ref{t4.1:s2}\). This implication is trivially valid because every weakly rigid metric is a weakly rigid semimetric.

\(\ref{t4.1:s2} \Rightarrow \ref{t4.1:s3}\). Let \(A\) and \(B\) be disjoint proximinal subsets of \((X, d) \in \mathbf{WR}\) and let
\begin{equation}\label{t4.1:e1}
G = G_{X, d}(A, B).
\end{equation}
Let us prove condition~\ref{t4.1:s3}. Write \(v_0\) for a vertex of \(G\). As in the proof of Theorem~\ref{t3.2}, it is easy to verify that a mapping
\begin{equation}\label{t4.1:e2}
X \setminus \{v_0\} \ni u \mapsto d(u, v_0) \in [0, \infty)
\end{equation}
is injective, that implies the inequality  \(|V(G)| \leqslant \mathfrak{c}\). The injectivity of \eqref{t4.1:e2} also implies that there is at most one \(u \in V(G)\) such that
\[
d(v_0, u) = \dist(A, B).
\]
Consequently, we have \(\deg_{G}(v_0) \leqslant 1\) by \eqref{d1.5:e1}. Thus, condition~\ref{t4.1:s3.1} holds.

Condition~\ref{t4.1:s3.2} follows from statement~\ref{t3.1:s3} of Theorem~\ref{t3.1}.

\(\ref{t4.1:s3} \Rightarrow \ref{t4.1:s1}\). Let statement~\ref{t4.1:s3} hold. We must construct a weakly rigid metric space \((X, d)\) such that \eqref{t4.1:e1} is valid for some disjoint proximinal subsets \(A\) and \(B\) of \(X\).

If \(G\) is a null graph, then, by Theorem~\ref{t3.2}, there exist a strongly rigid metric space \((X, d)\) and disjoint proximinal \(A\), \(B \subseteq X\) such that \eqref{t4.1:e1} holds. Since every strongly rigid metric is weakly rigid, \ref{t4.1:s1} follows from \ref{t4.1:s3} when \(G\) is a null graph.

Let us consider the case \(E(G) \neq \varnothing\). Write
\begin{equation}\label{t4.1:e4}
A_0 := \{v \in A \colon \deg v = 1\} \quad \text{and} \quad
B_0 := \{v \in B \colon \deg v = 1\}.
\end{equation}

Let \(a_0^* \in A\) and \(b_0^* \in B\) be some adjacent vertices of \(G\). Let us consider the following sets
\begin{equation}\label{t4.1:e6}
\begin{aligned}
S_{1,1} &:= \bigl\{(a, b) \in A \times B \colon \{a, b\} \in E(G)\bigr\}, &
S_{1,0} &:= \bigl\{(a_0^*, b) \in A \times B \colon b \in B \setminus B_0\bigr\},\\
S_{0,1} &:= \bigl\{(a, b_0^*) \in A \times B \colon a \in A \setminus A_0\bigr\}, &
\widetilde{S} &:= (A \times B) \setminus (S_{1,1} \cup S_{1,0} \cup S_{0,1}).
\end{aligned}
\end{equation}
It is clear that \(S_{1,1}\), \(S_{1,0}\), \(S_{0,1}\), \(\widetilde{S}\) are disjoint and the equality
\begin{equation}\label{t4.1:e3}
A \times B = S_{1,1} \cup S_{1, 0} \cup S_{0, 1} \cup \widetilde{S}
\end{equation}
holds. Condition~\ref{t4.1:s3.2} implies the inequality \(|S| \leqslant \mathfrak{c}\) for every \(S \subseteq A \times B\). Hence, there is a function \(g \colon A \times B \to [0, \infty)\) such that \(g|_{(A \times B) \setminus S_{1,1}}\) is injective and
\begin{align}\label{t4.1:e7}
g(s) = 1 &\quad \text{for every} \quad s \in S_{1,1};\\
\label{t4.1:e8}
1 < g(s) \leqslant \frac{6}{5} &\quad \text{for every} \quad s \in S_{1,0};\\
\label{t4.1:e9}
\frac{6}{5} < g(s) \leqslant \frac{7}{5} &\quad \text{for every} \quad s \in S_{0,1};\\
\label{t4.1:e10}
\frac{7}{5} < g(s) \leqslant \frac{8}{5} &\quad \text{for every} \quad s \in \widetilde{S}.
\end{align}
Let \(d_A \colon A \times A \to [0, \infty)\) and \(d_B \colon B \times B \to [0, \infty)\) be strongly rigid metrics for which the inclusions
\begin{equation}\label{t4.1:e11}
d_A(A \times A) \subseteq\left(\frac{8}{5}, \frac{9}{5}\right] \quad \text{and} \quad d_B(B \times B) \subseteq\left(\frac{9}{5}, 2\right]
\end{equation}
hold. Write \(X = A \cup B\). Then the set \(X \times X\) is the union of the disjoint sets \(A \times B\), \(B \times A\), \(A \times A\) and \(B \times B\). Moreover, the set \(A \times B\) also is the union of the disjoint sets \(S_{1,1}\), \(S_{1,0}\), \(S_{0,1}\), \(\widetilde{S}\). Therefore, one can construct a unique symmetric mapping \(d\) on \(X \times X\) such that
\begin{equation}\label{t4.1:e12}
d|_{A \times A} = d_A, \quad d|_{B \times B} = d_B \quad \text{and} \quad d|_{A \times B} = g.
\end{equation}
Since \(d_A\) and \(d_B\) are metrics and we have either \((x, x) \in A \times A\) or \((x, x) \in B \times B\) for every \(x \in X\), the equivalence
\[
(d(x, y) = 0) \Leftrightarrow (x = y)
\]
is valid for all \(x\), \(y \in X\). Hence, \(d\) is a semimetric on \(X\). Furthermore, from \eqref{t4.1:e7}--\eqref{t4.1:e11} follows the double inequality \(1 \leqslant d(x, y) \leqslant 2\) whenever \(d(x, y) \neq 0\). Hence, \(d\) is a metric on \(X\).

We claim that \(d\) is weakly rigid. Indeed, on the contrary, suppose that \(d \notin \mathbf{WR}\). Then, using Definitions~\ref{d1.0} and \ref{d3.1}, we can find different points \(x_1\), \(x_2\), \(x_3 \in X\) such that
\begin{equation}\label{t4.1:e13}
d(x_2, x_1) = d(x_2, x_3) \neq 0.
\end{equation}
Without loss of generality, we may assume \(x_2 \in A\). (The case \(x_2 \in B\) is completely similar.) The metric \(d_A\) is strongly rigid. Hence, from~\eqref{t4.1:e13} it follows that at least one from the points \(x_1\), \(x_3\) belongs to the set \(B\). Let us consider first the case when \(x_1 \in B\) and \(x_3 \in B\). Then, using the equality \(d|_{A \times B} = g\), we may rewrite \eqref{t4.1:e13} in the form
\begin{equation}\label{t4.1:e14}
g(x_2, x_1) = g(x_2, x_3) \neq 0.
\end{equation}
Since the restriction \(g|_{(A \times B) \setminus S_{1,1}}\) is injective, \(S_{1, 1} \cap \{(x_2, x_1), (x_2, x_3)\} \neq \varnothing\) holds. Without loss of generality, we may assume that \((x_2, x_1) \in S_{1,1}\). If \((x_2, x_3) \notin S_{1,1}\), then from~\eqref{t4.1:e7}--\eqref{t4.1:e10} follows the inequality \(g(x_2, x_3) > g(x_2, x_1)\), contrary to~\eqref{t4.1:e14}. Hence, we have
\begin{equation}\label{t4.1:e15}
(x_2, x_1) \in S_{1,1} \quad \text{and} \quad (x_2, x_3) \in S_{1,1}.
\end{equation}
Memberships \eqref{t4.1:e15}, and definitions~\eqref{t4.1:e6}, and \(x_1 \neq x_2\) imply the inequality \(\deg_{G}(x_2) \geqslant 2\), that contradicts condition~\ref{t4.1:s3.2}. Thus, we have either \(x_1 \in B\) and \(x_3 \in A\) or \(x_1 \in A\) and \(x_3 \in B\). If \(x_1 \in B\) and \(x_3 \in A\), then, using~\eqref{t4.1:e11} and \eqref{t4.1:e7}--\eqref{t4.1:e10}, we obtain the inequality
\begin{equation}\label{t4.1:e18}
d(x_2, x_1) < d(x_2, x_3).
\end{equation}
Analogously, \(x_1 \in A\) and \(x_3 \in B\) imply the inequality
\begin{equation}\label{t4.1:e19}
d(x_2, x_3) < d(x_2, x_1).
\end{equation}
Each of inequalities \eqref{t4.1:e18}--\eqref{t4.1:e19} contradicts \eqref{t4.1:e13}. Thus, \(d\) is a weakly rigid metric on \(X\).

We now prove that \(B\) is proximinal subspace of \((X, d)\).

It follows from Definition~\ref{d1.1} and \(X = A \cup B\) that the set \(B\) is proximinal in \((X, d)\) iff, for every \(x \in A\), there is a best approximation \(b_0 = b_0(x)\) to \(x\) in \(B\), \(d(x, b_0) = \inf\bigl\{d(x, b) \colon b \in B\bigr\}\). Using \eqref{t4.1:e12} we obtain that \(B\) is proximinal iff, for every \(x \in A\), there is \(b_0 = b_0(x) \in B\) such that
\[
g(x, b_0) = \inf\bigl\{g(x, b) \colon b \in B\bigr\}.
\]
Suppose that \(x \in A_0\). Then, by \eqref{t4.1:e4}, the equality \(\deg_{G}(x) = 1\) holds. Consequently, we can find \(y_0 \in B\) such that \(\{x, y_0\} \in E(G)\). Now from \eqref{t4.1:e6} and \eqref{t4.1:e7} follows the equality \(g(x, y_0) = 1\). Using \eqref{t4.1:e7}--\eqref{t4.1:e10} we obtain \(\inf\bigl\{g(x, b) \colon b \in B\bigr\} \geqslant 1\). Hence, the point \(y_0 \in B\) is a best approximation to \(x\) in \(B\). Let us consider the case \(x \in A \setminus A_0\). Then \eqref{t4.1:e6} and \eqref{t4.1:e3} imply the membership \((x, b) \in S_{0, 1} \cup \widetilde{S}\) for every \(b \in B\). If \(b_0^* \in B\) is a point defined as in the third formula of \eqref{t4.1:e6}, then \(b_0^*\) is a unique point \(b\) of \(B\) satisfying the membership \((x, b) \in S_{0, 1}\). Consequently, from \eqref{t4.1:e9} and \eqref{t4.1:e10} follows the double inequality
\[
g(x, b_0^*) \leqslant \frac{7}{5} < g(x, b)
\]
for every \(b \in B \setminus \{b_0^*\}\). Hence, \(b_0^*\) is the best approximation to \(x\) in \(B\). Thus, the set \(B\) is a proximinal subset of \((X, d)\).

Reasoning similarly, we also can prove that \(A\) is proximinal in \((X, d)\).

To complete the proof, it suffices to note that \(E(G) \neq \varnothing\) and, therefore, by the definition of the metric \(d\), \((a, b) \in A \times B\) is a best proximity pair for \(A\) and \(B\) if and only if \(\{a, b\} \in E(G)\).
\end{proof}

Let \(A\) and \(B\) be disjoint proximinal subsets of a semimetric space \((X, d)\). In what follows we denote by \(A_0\) and \(B_0\) the sets defined as
\begin{equation}\label{t4.1:e5}
A_0 := \{a \in A \colon \dist(a, B) = \dist(A, B)\} \quad \text{and} \quad
B_0 := \{b \in B \colon \dist(b, A) = \dist(A, B)\}.
\end{equation}

\begin{theorem}\label{t4.2}
Let \((X, d)\) be a semimetric space. Then the following statements are equivalent.
\begin{enumerate}
\item \label{t4.2:s1} The inequality \(\deg_{G}(v) \leqslant 1\) holds for every vertex \(v\) of every proximinal graph \(G = G_{X, d}(A, B)\).
\item \label{t4.2:s2} The equality
\begin{equation}\label{t4.2:e1}
|A_0| = |B_0|
\end{equation}
holds for all disjoint proximinal sets \(A\), \(B \subseteq X\), where \(A_0\) and \(B_0\) are defined by \eqref{t4.1:e5}.
\item \label{t4.2:s3} For every proximinal \(A \subseteq X\) and every \(x \in X\) there exists the unique best approximation to \(x\) in \(A\).
\item \label{t4.2:s4} For every \(Y \subseteq X\) and every \(x \in X\) there exists at most one best approximation to \(x\) in \(Y\).
\item \label{t4.2:s5} \((X, d) \in \mathbf{WR}\).
\end{enumerate}
\end{theorem}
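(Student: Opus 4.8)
The plan is to prove the cyclic chain $\ref{t4.2:s5} \Rightarrow \ref{t4.2:s4} \Rightarrow \ref{t4.2:s3} \Rightarrow \ref{t4.2:s1} \Rightarrow \ref{t4.2:s2} \Rightarrow \ref{t4.2:s5}$. Before starting it I would record one bookkeeping fact linking the graph-theoretic and the metric statements: if $A$, $B$ are disjoint proximinal subsets of $(X, d)$ and $G = G_{X,d}(A, B)$, then the set $A_0$ from \eqref{t4.1:e5} equals $\{a \in A \colon \deg_{G}(a) \geqslant 1\}$, and symmetrically for $B_0$. Indeed, if $a \in A_0$ then $\dist(a, B) = \dist(A, B)$, and proximinality of $B$ gives some $b \in B$ with $d(a, b) = \dist(a, B) = \dist(A, B)$, so $\{a, b\} \in E(G)$; conversely an edge $\{a, b\} \in E(G)$ forces $\dist(A, B) \leqslant \dist(a, B) \leqslant d(a, b) = \dist(A, B)$, so $a \in A_0$.

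For $\ref{t4.2:s5} \Rightarrow \ref{t4.2:s4}$: if some $x \in X$ had two distinct best approximations $y_1$, $y_2$ in a set $Y \subseteq X$, then $d(x, y_1) = d(x, y_2) = \dist(x, Y)$, and $x$, $y_1$, $y_2$ are pairwise distinct, since $x = y_1$ would give $\dist(x, Y) = 0$ and then $d(x, y_2) = 0$, i.e. $y_2 = x = y_1$; hence the three-point subspace $\{x, y_1, y_2\}$ fails to be strongly rigid, contradicting $(X, d) \in \mathbf{WR}$. The step $\ref{t4.2:s4} \Rightarrow \ref{t4.2:s3}$ is immediate: a proximinal set contains at least one best approximation to each point by Definition~\ref{d1.1}, while $\ref{t4.2:s4}$ supplies at most one. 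For $\ref{t4.2:s3} \Rightarrow \ref{t4.2:s1}$: if a proximinal graph $G = G_{X,d}(A, B)$ had a vertex, say $a \in A$, of degree at least $2$, there would be distinct $b_1$, $b_2 \in B$ with $d(a, b_1) = d(a, b_2) = \dist(A, B)$; the squeeze from the bookkeeping fact gives $\dist(a, B) = \dist(A, B)$, so $b_1$ and $b_2$ are two distinct best approximations to $a$ in the proximinal set $B$, against $\ref{t4.2:s3}$ (a vertex in $B$ is handled symmetrically).

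For $\ref{t4.2:s1} \Rightarrow \ref{t4.2:s2}$: fix disjoint proximinal $A$, $B$ and put $G = G_{X,d}(A, B)$. By $\ref{t4.2:s1}$ every non-isolated vertex has degree exactly $1$, so by the bookkeeping fact the assignment sending each $a \in A_0$ to its unique $G$-neighbour is a well-defined map $A_0 \to B_0$; it is injective (a common value $b$ for $a_1 \neq a_2$ would give $\deg_{G}(b) \geqslant 2$) and surjective (each $b \in B_0$ has a neighbour, which lies in $A_0$), hence a bijection, so $|A_0| = |B_0|$. Finally, for $\ref{t4.2:s2} \Rightarrow \ref{t4.2:s5}$ I argue by contraposition: if $(X, d) \notin \mathbf{WR}$, some three-point subspace is not strongly rigid, and since two distinct two-element subsets of a three-element set always meet, there are pairwise distinct $w$, $s$, $t \in X$ with $d(w, s) = d(w, t)$; then $A := \{w\}$ and $B := \{s, t\}$ are disjoint proximinal sets with $\dist(A, B) = d(w, s) = d(w, t)$, so $A_0 = \{w\}$ but $B_0 = \{s, t\}$, violating \eqref{t4.2:e1}.

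Most of these implications are one-line verifications once the bookkeeping fact is in place; the step needing the most attention is $\ref{t4.2:s1} \Rightarrow \ref{t4.2:s2}$, where the hypothesis $\deg_{G}(v) \leqslant 1$ must be invoked precisely to turn the edge set of $G$ into an honest matching, i.e. a bijection rather than merely a relation, between $A_0$ and $B_0$, with the cardinalities allowed to be arbitrary. Correspondingly, in the reverse construction $\ref{t4.2:s2} \Rightarrow \ref{t4.2:s5}$ one must place the single point $w$ on the $A$-side so that $|A_0| = 1 < 2 = |B_0|$.
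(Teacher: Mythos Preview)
Your proof is correct. The arguments are essentially the same elementary observations as in the paper, but you run the cycle in the opposite direction: the paper proves $\ref{t4.2:s1} \Rightarrow \ref{t4.2:s2} \Rightarrow \ref{t4.2:s3} \Rightarrow \ref{t4.2:s4} \Rightarrow \ref{t4.2:s5} \Rightarrow \ref{t4.2:s1}$, while you prove $\ref{t4.2:s5} \Rightarrow \ref{t4.2:s4} \Rightarrow \ref{t4.2:s3} \Rightarrow \ref{t4.2:s1} \Rightarrow \ref{t4.2:s2} \Rightarrow \ref{t4.2:s5}$. Both cycles share the step $\ref{t4.2:s1} \Rightarrow \ref{t4.2:s2}$, handled via the same matching/bijection argument (the paper builds two injections $A_0 \to B_0$ and $B_0 \to A_0$, you build a single bijection; both are fine for arbitrary cardinals).

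The one substantive difference worth noting is that the paper closes its loop at $\ref{t4.2:s5} \Rightarrow \ref{t4.2:s1}$ by invoking Theorem~\ref{t4.1}, which is a considerably heavier result involving an explicit construction of weakly rigid metrics. Your route avoids this entirely: by going $\ref{t4.2:s5} \Rightarrow \ref{t4.2:s4} \Rightarrow \ref{t4.2:s3} \Rightarrow \ref{t4.2:s1}$ through the direct three-point/isosceles-triangle argument, your proof of Theorem~\ref{t4.2} becomes fully self-contained. That is a genuine, if modest, simplification over the paper's proof.
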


\begin{proof}
\(\ref{t4.2:s1} \Rightarrow \ref{t4.2:s2}\). Let \ref{t4.2:s1} hold and let \(A\) and \(B\) be disjoint proximinal subsets of \(X\). We must prove \eqref{t4.2:e1}.

Let us consider first the case when \(A_0\) is empty and show that \(B_0 = \varnothing\). Indeed, if \(B_0 \neq \varnothing\) and \(b_0 \in B_0\), then we have
\begin{equation}\label{t4.2:e2}
\dist(b_0, A) = \dist(A, B)
\end{equation}
by~\eqref{t4.1:e5}. Since \(A\) is proximinal, we can find \(a_0 \in A\) such that \(d(b_0, a_0) = \dist(b_0, A)\). The last equality and \eqref{t4.2:e2} imply
\begin{equation}\label{t4.2:e3}
d(a_0, b_0) = \dist(A, B).
\end{equation}
Moreover, we evidently have
\begin{equation}\label{t4.2:e4}
d(a_0, b_0) \geqslant \dist(a_0, B) \geqslant \dist(A, B).
\end{equation}
Now from \eqref{t4.2:e3} and \eqref{t4.2:e4} follows \(\dist(a_0, B) = \dist(A, B)\). Hence, \(a_0 \in A_0\), contrary to \(A_0 = \varnothing\). Similarly, from \(B_0 = \varnothing\) follows \(A_0 = \varnothing\). Thus, it suffices to prove \eqref{t4.2:e1} when \(A_0 \neq \varnothing \neq B_0\).

Let us consider an arbitrary \(v_0 \in B\). Using Definition~\ref{d1.5} we see that the inequality \(\deg_{G}(v_0) > 0\) holds if and only if \(v_0 \in B_0\). Thus, we have \(B_0 = \left\{b \in B \colon \deg_{G}(b) > 0\right\}\) and, analogously, \(A_0 = \left\{a \in A \colon \deg_{G}(a) > 0\right\}\). Consequently, there is a mapping \(F \colon B_0 \to A_0\) such that
\begin{equation}\label{t4.2:e5}
\{v, F(v)\} \in E(G)
\end{equation}
for every \(v \in B_0\). Statement~\ref{t4.2:s1} implies that \(F\) is injective. Indeed, if \(u_0 \in A_0\) and \(F(v_1) = u_0 = F(v_2)\) holds for some different \(v_1\), \(v_2 \in B_0\), then we have \(\deg_{G}(u_0) \geqslant 2\), contrary to \ref{t4.2:s1}.

Since \(F\) is injective, the inequality \(|B_0| \leqslant |A_0|\) holds. Similarly, we obtain \(|A_0| \leqslant |B_0|\). Equality~\eqref{t4.2:e1} follows.

\(\ref{t4.2:s2} \Rightarrow \ref{t4.2:s3}\). Let \ref{t4.2:s2} hold, let \(A\) be a proximinal subset of \(X\) and let \(x\) be an arbitrary point of \(X\). If \(x \in A\), then \(x\) is only best approximation in \(A\) to itself. If \(x\) does not belong to \(A\), then, for \(B := \{x\}\), we have \(B_0 = \{x\}\) by \eqref{t4.1:e5} and Definition~\ref{d1.1}. The sets \(A\) and \(B\) are disjoint proximinal subsets of \(X\) and, consequently,
\begin{equation}\label{t4.2:e6}
|A_0| = |B_0| = 1
\end{equation}
holds by statement~\ref{t4.2:s2}. Since the set \(A_0\) is the set of all best approximations to \(x\) in \(A\), statement \ref{t4.2:s3} follows from \eqref{t4.2:e6} and \eqref{t4.1:e5}.

\(\ref{t4.2:s3} \Rightarrow \ref{t4.2:s4}\). Let \ref{t4.2:s3} hold. If \(Y\) is a subset of \(X\) and \(x\) is a point of \(X\) such that there are distinct \(y_1\), \(y_2 \in Y\) satisfying
\[
d(x, y_1) = d(x, y_2) = \dist(x, Y),
\]
then the set \(A = \{y_1, y_2\}\) is proximinal in \((X, d)\) and \(y_1\), \(y_2\) are two different best approximations to \(x\) in \(A\), contrary to \ref{t4.2:s3}.

\(\ref{t4.2:s4} \Rightarrow \ref{t4.2:s5}\). Let \ref{t4.2:s4} hold. If \((X, d)\) is not weakly rigid, then, by Definition~\ref{d3.1}, there is a three-point \(X' \subseteq X\) such that \(d|_{X' \times X'} \in \mathbf{SR}\). Hence, we can find distinct \(x_1\), \(x_2\), \(x_3 \in X\) for which the equality
\begin{equation}\label{t4.2:e7}
d(x_1, x_2) = d(x_3, x_2)
\end{equation}
holds (see the proof of Proposition~\ref{p2.4}). Let \(Y = \{x_2, x_3\}\). Thus, \(x_1\) and \(x_3\) are two different best approximations to \(x_2\) in \(Y\), contrary to \ref{t4.2:s4}.

\(\ref{t4.2:s5} \Rightarrow \ref{t4.2:s1}\). It follows from Theorem~\ref{t4.1}.
\end{proof}

The membership \((X, d) \in \mathbf{WR}\) can be easily described by digraphs introduced in Definition~\ref{d1.9}. Let us denote by \(Di^0\) and \(Di^1\) the digraphs depicted below on Figure~\ref{fig3}. It is easy to see that \((X, d)\) is weakly rigid iff \(Di_Y\) is isomorphic to \(Di^0\) for every three-point \(Y \subseteq X\). Moreover, \((X, d)\) is strongly rigid iff it is weakly rigid and \(Di_Z\) is isomorphic to \(Di^1\) for every four-point \(Z \subseteq X\).

\begin{figure}[htb]
\centering
\begin{tikzpicture}[scale=1,
arrow/.style = {-{Stealth[length=5pt]}, shorten >=2pt}]
\def\xx{1cm}
\def\yy{-1cm}
\def\dr{2pt}
\coordinate (v1) at (0, 0);
\coordinate (v2) at (0, \yy);
\coordinate (v3) at (0, 2*\yy);
\coordinate (v4) at (0, 3*\yy);
\draw [fill, black] (v1) circle (\dr);
\draw [fill, black] (v2) circle (\dr);
\draw [fill, black] (v3) circle (\dr);
\draw [fill, black] (v4) circle (\dr);
\draw [arrow] (v1) -- (v2);
\draw [arrow] (v2) -- (v3);
\draw [arrow] (v3) -- (v4);
\draw (0, 0.5) node[above] {\(Di^0\)};

\begin{scope}[xshift=4cm]
\coordinate (v1) at (0, 0);
\coordinate (v2) at (0, \yy);
\coordinate (v3) at (0, 2*\yy);
\coordinate (v4) at (0, 3*\yy);
\coordinate (v5) at (0, 4*\yy);
\draw [fill, black] (v1) circle (\dr);
\draw [fill, black] (v2) circle (\dr);
\draw [fill, black] (v3) circle (\dr);
\draw [fill, black] (v4) circle (\dr);
\draw [fill, black] (v5) circle (\dr);
\draw [arrow] (v1) -- (v2);
\draw [arrow] (v2) -- (v3);
\draw [arrow] (v3) -- (v4);
\draw [arrow] (v4) -- (v5);
\draw (0, 0.5) node[above] {\(Di^1\)};
\end{scope}
\end{tikzpicture}
\caption{Digraphs corresponding to all three-point and four-point strongly rigid semimetric spaces.} \label{fig3}
\end{figure}
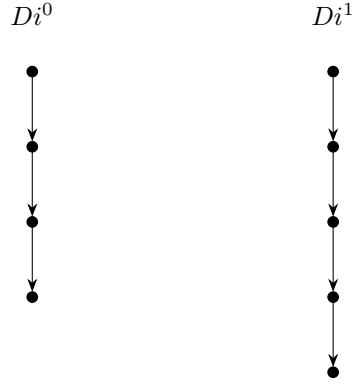

\begin{corollary}\label{c3.3}
The double inclusion
\begin{equation}\label{c3.3:e1}
\mathbf{SR} \subseteq \mathbf{UBPP} \subseteq \mathbf{WR}
\end{equation}
holds.
\end{corollary}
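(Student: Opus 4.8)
The plan is to observe that neither inclusion requires genuinely new work: both follow at once from results already established. For the inclusion $\mathbf{SR} \subseteq \mathbf{UBPP}$ I would simply recall that this is precisely inclusion~\eqref{t3.2:e10}, verified in the course of proving $\ref{t3.2:s2} \Rightarrow \ref{t3.2:s3}$ of Theorem~\ref{t3.2}: if a semimetric space admits disjoint proximinal sets $A$, $B$ with two distinct best proximity pairs $(x,y)$, $(u,v)$, then $d(x,y) = d(u,v) = \dist(A,B)$ with $\{x,y\} \neq \{u,v\}$, contradicting strong rigidity.

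For the inclusion $\mathbf{UBPP} \subseteq \mathbf{WR}$ I would argue by contraposition, reusing the final part of the proof of Proposition~\ref{p2.4}. Assume $(X, d) \notin \mathbf{WR}$. Then, by Definition~\ref{d3.1}, some three-point subspace $\{x_1, x_2, x_3\}$ fails to be strongly rigid, so after relabelling we may take $x_1$, $x_2$, $x_3$ pairwise distinct with $d(x_1, x_2) = d(x_2, x_3)$. The finite sets $A := \{x_1, x_3\}$ and $B := \{x_2\}$ are disjoint and proximinal (every finite subset of a semimetric space is proximinal), and $\dist(A, B) = \min\{d(x_1, x_2), d(x_3, x_2)\} = d(x_1, x_2) = d(x_3, x_2)$; hence $(x_1, x_2)$ and $(x_3, x_2)$ are two distinct best proximity pairs for $A$ and $B$, so $(X, d) \notin \mathbf{UBPP}$. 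Alternatively, one may invoke Theorem~\ref{t4.2} directly: a $\mathbf{UBPP}$-space satisfies $|E(G_{X, d}(A, B))| \leqslant 1$ for all disjoint proximinal $A$, $B$, so in particular $\deg_{G}(v) \leqslant 1$ for every vertex of every proximinal graph, i.e. statement~\ref{t4.2:s1} holds, whence $(X, d) \in \mathbf{WR}$ by the equivalence $\ref{t4.2:s1} \Leftrightarrow \ref{t4.2:s5}$.

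Since the whole argument reduces to quoting \eqref{t3.2:e10} together with either the construction from Proposition~\ref{p2.4} or the equivalences of Theorem~\ref{t4.2}, there is no real obstacle here; the only point needing a word of care is the verification that $A = \{x_1, x_3\}$ and $B = \{x_2\}$ are proximinal, which is immediate from finiteness.
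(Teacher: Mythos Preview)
Your proposal is correct and essentially matches the paper's own proof. The paper also cites \eqref{t3.2:e10} for $\mathbf{SR} \subseteq \mathbf{UBPP}$ and, for $\mathbf{UBPP} \subseteq \mathbf{WR}$, appeals to Theorem~\ref{t4.2} together with Theorem~\ref{t3.2} in the case of a one-point $B$; your alternative route via $\ref{t4.2:s1} \Leftrightarrow \ref{t4.2:s5}$ is exactly this, and your direct contrapositive argument (reusing the isosceles-triangle construction from Proposition~\ref{p2.4}) is a slightly more self-contained variant of the same idea.
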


\begin{proof}
The first inclusion in \eqref{c3.3:e1} coincides with inclusion~\eqref{t3.2:e10}. To prove the second one, it suffices to use Theorem~\ref{t4.2} and Theorem~\ref{t3.2} for the case when \(B\) is a one-point set.
\end{proof}

As Example~\ref{ex2.5} shows, the first inclusion in \eqref{c3.3:e1} is strict. A four-point semimetric space
\begin{equation}\label{e3.6}
(X, d) \in \mathbf{WR} \setminus \mathbf{UBPP}
\end{equation}
is constructed in the next example (see Figure~\ref{fig3.1}).

\begin{figure}[htb]
\centering
\begin{tikzpicture}[scale=1,
arrow/.style = {-{Stealth[length=5pt]}, shorten >=2pt}]
\def\xx{1.5cm}
\def\yy{-1.5cm}
\coordinate [label=above left:{$a_1^*$}] (a1) at (0, 0);
\coordinate [label=below left:{$b_2^*$}] (b2) at (0, \yy);
\coordinate [label=above right:{$a_2^*$}] (a2) at (3*\xx, 0);
\coordinate [label=below right:{$b_1^*$}] (b1) at (3*\xx, 1.2*\yy);
\draw (a1) -- node [left] {\(1\)} (b2) -- node [below] {\(4\)} (b1) -- node [right] {\(2\)} (a2) -- node [above] {\(5\)} (a1) -- node [near start, above] {\(3\)} (b1);
\draw (b2) -- node [near start, above] {\(3\)} (a2);
\draw [fill, black] (b1) circle (2pt);
\draw [fill, black] (b2) circle (2pt);
\draw [fill, black] (a1) circle (2pt);
\draw [fill, black] (a2) circle (2pt);
\draw (2, 0.5) node[above] {\((X^*, \rho^*)\)};

\begin{scope}[xshift=9cm]
\def\xx{1cm}
\def\yy{-1cm}
\def\dr{2pt}
\coordinate (v1) at (0, 0);
\coordinate (v2) at (0, \yy);
\coordinate (v3) at (-\xx, 2*\yy);
\coordinate (v4) at (\xx, 2*\yy);
\coordinate (v5) at (0, 3*\yy);
\coordinate (v6) at (0, 4*\yy);
\draw [fill, black] (v1) circle (\dr);
\draw [fill, black] (v2) circle (\dr);
\draw [fill, black] (v3) circle (\dr);
\draw [fill, black] (v4) circle (\dr);
\draw [fill, black] (v5) circle (\dr);
\draw [fill, black] (v6) circle (\dr);
\draw [arrow] (v1) -- (v2);
\draw [arrow] (v2) -- (v3);
\draw [arrow] (v2) -- (v4);
\draw [arrow] (v3) -- (v5);
\draw [arrow] (v4) -- (v5);
\draw [arrow] (v5) -- (v6);
\draw (0, 0.5) node[above] {\(Di_{X^*}\)};
\end{scope}

\begin{scope}[xshift=13cm]
\def\xx{1cm}
\def\yy{-2cm}
\def\dr{2pt}
\coordinate [label=above left:\(a_1^*\)] (a1) at (0, 0);
\coordinate [label=above right:\(a_2^*\)] (a2) at (\xx, 0);
\coordinate [label=below left:\(b_2^*\)] (b2) at (0, \yy);
\coordinate [label=below right:\(b_1^*\)] (b1) at (\xx, \yy);
\draw [fill, black] (a1) circle (\dr);
\draw [fill, black] (a2) circle (\dr);
\draw [fill, black] (b1) circle (\dr);
\draw [fill, black] (b2) circle (\dr);
\draw (a1) -- (b2);
\draw (a2) -- (b1);
\draw (0, 0.5) node[above] {\(G_{X^*}(A^*, B^*)\)};
\end{scope}
\end{tikzpicture}
\caption{The space \((X^*, \rho^*)\), its digraph \(Di_{X^*}\), and the proximinal graph \(G_{X^*}(A^*, B^*)\) for \(A^* = \{a_1^*, b_2^*\}\) and \(B^* = \{a_2^*, b_1^*\}\).} \label{fig3.1}
\end{figure}
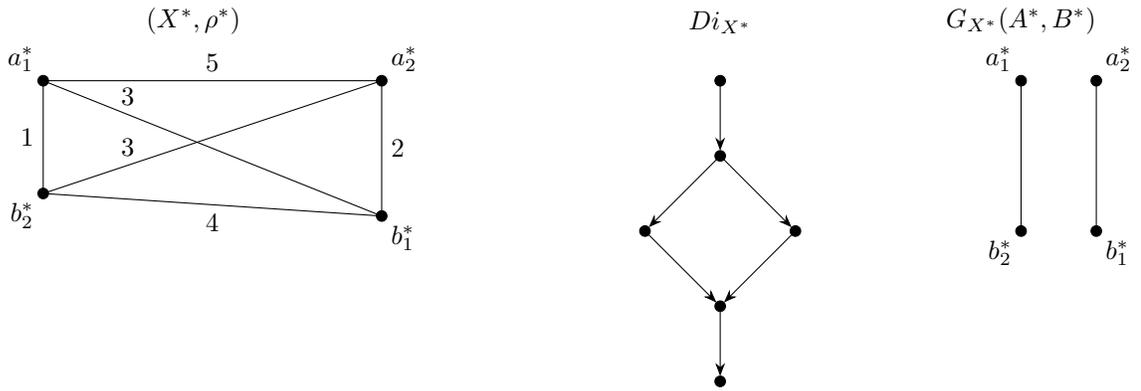

\begin{example}\label{ex3.4}
Let \(X^* = \{a_1^*, b_1^*, a_2^*, b_2^*\}\) be a four-point set and let \(\rho^* \colon X^* \times X^* \to [0, \infty)\) be a semimetric on \(X^*\) such that
\begin{align*}
\rho^*(a_1^*, b_2^*) & = 1, \quad \rho^*(a_2^*, b_1^*) = 2, \quad \rho^*(a_1^*, b_1^*) = \rho^*(a_2^*, b_2^*) = 3,\\
\rho^*(b_1^*, b_2^*) & = 4 \quad \text{and} \quad \rho^*(a_1^*, a_2^*) = 5.
\end{align*}
Then it can be proved directly that \eqref{e3.6} holds with \(X = X^{*}\) and \(d = \rho^*\). Moreover, it will be shown later in Lemma~\ref{l4.5} that, up to weak similarity, the class \(\mathbf{WR} \setminus \mathbf{UBPP}\) contains the only finite semimetric space \(Y\) whose digraph \(Di_Y\) is isomorphic to \(Di_{X^*}\).
\end{example}

In the rest of the section we discuss an interrelation between proximinal subspaces of weakly rigid semimetric spaces and proximinal subspaces of ultrametric spaces.

Recall that a metric space \((Y, \rho)\) is \emph{ultrametric} if the \emph{strong triangle inequality}
\[
d(x, y) \leqslant \max \bigl\{d(x, z), d(z, y)\bigr\}
\]
holds for all \(x\), \(y\), \(z \in Y\). For every ultrametric space \(X\), each triangle in \(X\) is isosceles with the base being no greater than the legs. The converse statement is also valid: If \(Z\) is a semimetric space and each triangle in \(Z\) is isosceles with the base no greater than the legs, then \(Z\) is an ultrametric space. The situation is opposite for weakly rigid spaces. A semimetric space is weakly rigid if and only if it does not contain any isosceles triangles. However, there is a close relationship between proximinal subsets in weakly rigid spaces and in ultrametric ones. To describe and prove it, we need the following.

\begin{theorem}[\cite{CDL2021a}]\label{t3.5}
Let \(G\) be a bipartite graph with fixed parts \(A\) and \(B\), and let
\[
G' = \bigl\{v \in V(G) \colon \deg_{G}(v) > 0\bigr\}.
\]
Then the following statements are equivalent:
\begin{enumerate}
\item \label{t3.5:s1} Either \(G\) is not a null graph and \(G'\) is a disjoint union of complete bipartite graphs, or \(E(G) = \varnothing\), but the sets \(A\) and \(B\) are infinite.
\item \label{t3.5:s2} \(G\) is proximinal for an ultrametric space \((X, d)\) with \(X = A \cup B\).
\end{enumerate}
\end{theorem}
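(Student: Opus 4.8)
The plan is to establish the two implications separately. For $(ii)\Rightarrow(i)$, assume $G=G_{X,d}(A,B)$ for an ultrametric space $(X,d)$ with $X=A\cup B$. If $E(G)=\varnothing$, then $G$ is in particular proximinal for a semimetric space, so $A$ and $B$ are infinite by statement~\ref{t3.1:s3} of Theorem~\ref{t3.1}, which is the second alternative in~\ref{t3.5:s1}. If $E(G)\neq\varnothing$, put $\delta=\dist(A,B)$ and show that the bipartite graph $G'$ contains no induced path on four vertices: if $a_1,a_2\in A$ and $b_1,b_2\in B$ satisfy $\{a_1,b_1\},\{a_1,b_2\},\{a_2,b_2\}\in E(G)$, then $d(a_1,b_1)=d(a_1,b_2)=d(a_2,b_2)=\delta$, and two applications of the strong triangle inequality along $a_2,b_2,a_1,b_1$ give $d(a_2,b_1)\leqslant\max\{d(a_2,b_2),d(b_2,a_1),d(a_1,b_1)\}=\delta$; since also $d(a_2,b_1)\geqslant\dist(A,B)=\delta$, we get $\{a_2,b_1\}\in E(G)$. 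A bipartite graph without an induced $P_4$ is a disjoint union of complete bipartite graphs (in a connected $P_4$-free bipartite graph a shortest path between non-adjacent vertices in opposite parts would be an induced $P_4$, so each component is complete bipartite), and $G'$ has no isolated vertices by its definition, so the first alternative in~\ref{t3.5:s1} holds.

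For $(i)\Rightarrow(ii)$ with $G$ a null graph and $A,B$ infinite, I would split $A=\bigsqcup_{n\geqslant1}A_n$ and $B=\bigsqcup_{n\geqslant1}B_n$ into countably many nonempty pieces, put $X=A\cup B$, and define $d(a,b)=1+\tfrac1{n+1}$ for $a\in A_n$, $b\in B_n$; $d(x,y)=\tfrac12$ for distinct $x,y$ lying in one common $A_n$ or one common $B_n$; and $d(x,y)=2$ for every other pair of distinct points. Since $d(x,y)=2$ holds precisely when $x$ and $y$ carry different indices, any third point is at distance $2$ from at least one of $x,y$, and the remaining intra-index distances are easily checked against the strong triangle inequality, so $d$ is an ultrametric; each point of $A_n$ (resp.\ $B_n$) has a nearest point of $B$ (resp.\ $A$) inside $B_n$ (resp.\ $A_n$), so $A$ and $B$ are proximinal; and $\dist(A,B)=\inf_n\bigl(1+\tfrac1{n+1}\bigr)=1$ is not attained, so $G_{X,d}(A,B)$ is the null graph with parts $A$ and $B$, i.e.\ it equals $G$.

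For $(i)\Rightarrow(ii)$ with $E(G)\neq\varnothing$, let the connected components of $G'$ be complete bipartite graphs with nonempty parts $A_i\subseteq A$ and $B_i\subseteq B$ ($i\in I$), set $X_0=\bigcup_{i\in I}(A_i\cup B_i)$, and let $A^{*}=A\setminus X_0$, $B^{*}=B\setminus X_0$ be the sets of vertices of degree $0$; note $A^{*}\neq A$ and $B^{*}\neq B$. On $X=A\cup B$ define $d=\tfrac12$ inside a common $A_i$ or $B_i$, $d=1$ between $A_i$ and $B_i$, $d=\tfrac32$ between distinct $A_i\cup B_i$ and $A_j\cup B_j$, and $d=2$ whenever at least one of the two distinct points belongs to $A^{*}\cup B^{*}$. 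These values are read off a nested chain of partitions of $X$, so $d$ is an ultrametric; every point of $A_i$ has a nearest point of $B$ inside $B_i$ and every point of $A^{*}$ has every point of $B$ as a nearest point, so $A$ and $B$ are proximinal; $\dist(A,B)=1$; and $d(a,b)=1$ exactly when $a$ and $b$ lie in a common component, i.e.\ exactly when $\{a,b\}\in E(G)$. Hence $G_{X,d}(A,B)=G$, and the degenerate sub-cases $|I|=1$ or $A^{*}\cup B^{*}=\varnothing$ require no change.

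The main obstacle is the construction in $(i)\Rightarrow(ii)$: one has to produce a single distance function that is genuinely an ultrametric, that makes \emph{both} $A$ and $B$ proximinal, and whose realized closest pairs form exactly the edge set of $G$; in the null-graph case this additionally forces $\dist(A,B)$ to be approached but never attained while both sets remain proximinal, which is the reason for the countable index bookkeeping. Verifying the strong triangle inequality for these piecewise-defined distances (most transparently by reading the distances off the nested partitions) and invoking the folklore fact that $P_4$-free bipartite graphs are precisely the disjoint unions of complete bipartite graphs are the only remaining routine points.
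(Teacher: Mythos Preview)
The paper does not give its own proof of this theorem: it is quoted verbatim from the reference \cite{CDL2021a} and used as a black box (only Proposition~\ref{p3.6} invokes it). So there is no in-paper argument to compare your attempt against.

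That said, your proof is correct and self-contained. For \(\ref{t3.5:s2}\Rightarrow\ref{t3.5:s1}\) the key step is the observation that in an ultrametric space any path \(a_2\text{--}b_2\text{--}a_1\text{--}b_1\) of edges at distance \(\delta=\dist(A,B)\) forces \(d(a_2,b_1)=\delta\); this is exactly the ``no induced \(P_4\)'' property, and your remark that a \(P_4\)-free bipartite graph has complete bipartite components is the standard characterisation. For \(\ref{t3.5:s1}\Rightarrow\ref{t3.5:s2}\) both constructions are sound: in each case the distance is the minimum level in a chain of partitions (singletons \(\prec\) the \(A_i\) and \(B_i\) separately \(\prec\) the blocks \(A_i\cup B_i\) \(\prec\) their union \(X_0\) \(\prec\) \(X\), with the isolated vertices joining only at the top level), which is the usual way to verify an ultrametric. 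The null-graph case is handled correctly by making the block-to-block distances \(1+\tfrac{1}{n+1}\) tend to, but never reach, \(1\).

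Two small points of presentation. First, your \(P_4\) argument only treats paths starting and ending in opposite parts, but that is enough: any induced \(P_4\) in a bipartite graph necessarily alternates parts, so its endpoints lie in different parts. Second, the shortest-path sketch you give (``a shortest path between non-adjacent vertices in opposite parts would be an induced \(P_4\)'') is fine once one notes that such a path has odd length \(\geqslant 3\), and the first four vertices of a shortest path are automatically induced. Neither point is a gap; the argument stands as written.
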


\begin{proposition}\label{p3.6}
Let \((X, d) \in \mathbf{WR}\), let \(A\) be a proximinal subset of \(X\) such that \(X \setminus A \neq \varnothing\), and let \(\Gamma = \Gamma_{X, d}(A)\) be a bipartite graph with the parts \(A\) and \(B = X \setminus A\). If, for all \(a \in A\) and \(b \in B\), we have
\begin{equation}\label{p3.6:e1}
\bigl(\{a, b\} \in E(\Gamma_{X, d}(A))\bigr) \Leftrightarrow \bigl(d(a, b) = \dist(b, A)\bigr),
\end{equation}
then there is an ultrametric \(\rho \colon X \times X \to [0, \infty)\) such that
\begin{equation}\label{p3.6:e2}
\Gamma_{X, d}(A) = G_{X, \rho}(A, B).
\end{equation}
\end{proposition}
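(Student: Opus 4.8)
The plan is to reduce the statement to Theorem~\ref{t3.5} by checking that the bipartite graph $\Gamma = \Gamma_{X,d}(A)$ (with parts $A$ and $B = X\setminus A$) satisfies statement~\ref{t3.5:s1}, i.e.\ that $\Gamma$ is not a null graph and that its essential part $\Gamma' = \{v\in V(\Gamma)\colon \deg_{\Gamma}(v)>0\}$ is a disjoint union of complete bipartite graphs. As soon as this is verified, Theorem~\ref{t3.5} (implication \ref{t3.5:s1}$\Rightarrow$\ref{t3.5:s2}) produces an ultrametric $\rho$ on the set $A\cup B = X$ with $\Gamma_{X,d}(A) = G_{X,\rho}(A,B)$, which is exactly \eqref{p3.6:e2}.

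First I would show that $\Gamma$ is not a null graph. Since $X\setminus A = B\neq\varnothing$, choose $b\in B$. Because $A$ is proximinal, there is $a_0\in A$ with $d(a_0,b) = \inf\{d(a,b)\colon a\in A\} = \dist(b,A)$, and $a_0\neq b$ since $a_0\in A$ while $b\notin A$; hence $\{a_0,b\}\in E(\Gamma)$ by \eqref{p3.6:e1}. Thus $E(\Gamma)\neq\varnothing$, and in particular the ``infinite parts'' alternative in statement~\ref{t3.5:s1} will not be needed.

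Next I would determine the degrees of the $B$-vertices. Since $(X,d)\in\mathbf{WR}$, the equivalence \ref{t4.2:s5}$\Leftrightarrow$\ref{t4.2:s3} of Theorem~\ref{t4.2} applies: for the proximinal set $A$ and every $x\in X$ there is exactly one best approximation to $x$ in $A$. Applying this with $x=b$ for $b\in B$, and noting that the set of best approximations to $b$ in $A$ is precisely $\{a\in A\colon d(a,b)=\dist(b,A)\} = \{a\in A\colon \{a,b\}\in E(\Gamma)\}$, we get $\deg_{\Gamma}(b)=1$ for every $b\in B$. Consequently, the connected component of $\Gamma'$ containing a non-isolated vertex $a\in A$ is exactly $\{a\}$ together with its neighbourhood $N_{\Gamma}(a)\subseteq B$ — each such neighbour has degree $1$, so it adds nothing further — and this component is the star $K_{1,|N_{\Gamma}(a)|}$, which is complete bipartite with parts $\{a\}$ and $N_{\Gamma}(a)$. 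Distinct such components are vertex-disjoint, because each $b\in B$ has degree $1$ and hence lies in only one of them. Therefore $\Gamma'$ is a nonempty disjoint union of complete bipartite graphs, statement~\ref{t3.5:s1} holds, and the proposition follows from Theorem~\ref{t3.5}.

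There is no real obstacle here, only two points that must be handled with care: one must genuinely rule out the null-graph case of Theorem~\ref{t3.5} (done above via proximinality of $A$ and $B\neq\varnothing$), and one must read ``disjoint union of complete bipartite graphs'' component-wise so that the decomposition of $\Gamma'$ into stars indeed satisfies it. The substantive input is Theorem~\ref{t4.2}, which converts weak rigidity plus proximinality of $A$ into the degree constraint $\deg_{\Gamma}(b)=1$ on the $B$-side.
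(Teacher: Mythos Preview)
Your proof is correct and follows essentially the same route as the paper: reduce to Theorem~\ref{t3.5} by showing $E(\Gamma)\neq\varnothing$ via proximinality of $A$, then use weak rigidity to force $\deg_{\Gamma}(b)=1$ for every $b\in B$, and conclude that $\Gamma'$ decomposes into disjoint stars centred at the non-isolated vertices of $A$. The only cosmetic difference is that you invoke Theorem~\ref{t4.2} (the equivalence \ref{t4.2:s5}\,$\Leftrightarrow$\,\ref{t4.2:s3}) for the uniqueness of best approximations, whereas the paper phrases this via Theorem~\ref{t4.1}; your citation is in fact the more direct one.
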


\begin{proof}
Let \eqref{p3.6:e1} hold for all \(a \in A\) and \(b \in B\). By Theorem~\ref{t3.5}, it suffices to show that \(\Gamma\) is not a null graph,
\begin{equation}\label{p3.6:e3}
E(\Gamma) \neq \varnothing,
\end{equation}
and \(\Gamma' := \bigl\{v \in V(\Gamma) \colon \deg_{\Gamma}(v) > 0\bigr\}\) is a disjoint union of complete bipartite graphs.

First of all we note that Theorem~\ref{t4.1} together with \eqref{p3.6:e1} implies \eqref{p3.6:e3} because \(A\) is proximinal in \((X, d)\). Thus, it suffices to show that \(\Gamma'\) is a disjoint union of complete bipartite graphs.

Let us consider an arbitrary \(\{a^*, b^*\} \in \Gamma'\), \(a^* \in A\), \(b^* \in B\). Since \(\Gamma'\) is a subgraph of \(\Gamma\) and \((X, d) \in \mathbf{WR}\), Theorem~\ref{t4.1} and \eqref{p3.6:e1} imply that \(a^*\) is the unique best approximation to \(b^*\) in \(A\). Write \(B^* = B^*(a^*)\) for the set of all points \(b \in B\) which satisfies the relationship \(\{a^*, b\} \in E(\Gamma')\). Then the graph \(\Gamma'_{a^*}\) with
\begin{equation}\label{p3.6:e4}
V(\Gamma'_{a^*}) = \{a^*\} \cup B^*(a^*) \quad \text{and} \quad E(\Gamma'_{a^*}) = \bigl\{\{a^*, b\} \colon b \in B^*(a^*)\bigr\}
\end{equation}
is a subgraph of \(\Gamma'\), and
\[
V(\Gamma'_{a_1^*}) \cap V(\Gamma'_{a_2^*}) = \varnothing
\]
holds whenever \(a_1^*\) and \(a_2^*\) are different points of \(A \cap V(\Gamma')\). Thus, \(\Gamma'\) is the disjoint union of the graphs \(\Gamma'_{a}\), \(a \in A \cap V(\Gamma')\). Using \eqref{p3.6:e4} we see that every \(\Gamma'_{a}\) is a star with the center \(a\) whenever \(a \in A \cap V(\Gamma')\). This completes the proof because each star is a complete bipartite graph.
\end{proof}

\begin{remark}\label{r4.7}
A special kind of bipartite graphs, the trees, gives a natural language for description of ultrametric spaces \cite{Carlsson2010, DLW, Fie, GV2012DAM, HolAMM2001, H04, BH2, Lemin2003, Bestvina2002, DDP2011pNUAA, DP2019PNUAA, DPT2017FPTA, DPT2015, Pet2018pNUAA, DP2018pNUAA, Dov2020TaAoG, BS2017, DP2020pNUAA, DKa2021, Dov2019pNUAA, DP2013SM, PD2014JMS}, but the authors are aware of only papers \cite{BDK2021a} and \cite{PD2014JMS}, in which complete bipartite and, more generally, complete multipartite graphs are systematically used to study ultrametric spaces.
\end{remark}

We conclude this section by following.

\begin{conjecture}\label{con4.8}
Let \(\Gamma\) be a bipartite graph with fixed parts \(A\) and \(B\). Then the following statements are equivalent.
\begin{enumerate}
\item \label{con4.8:s1} There is a weakly rigid semimetric \(d\) on \(X = A \cup B\) such that \(A\) is proximinal in \((X, d)\), and \(\Gamma = \Gamma_{X, d}(A)\), and
\[
\bigl(\{a, b\} \in E(\Gamma_{X, d}(A))\bigr) \Leftrightarrow \bigl(d(a, b) = \dist(b, A)\bigr)
\]
holds for all \(a \in A\) and \(b \in B\).
\item \label{con4.8:s2} Every connected component of \(\Gamma\) is a star with a center \(a \in A\), and the equality \(\deg_{\Gamma} b = 1\) holds for every \(b \in B\), and the inequality
\[
|V(\Gamma)| \leqslant \mathfrak{c}
\]
holds, where \(\mathfrak{c}\) is the cardinality of the continuum.
\end{enumerate}
\end{conjecture}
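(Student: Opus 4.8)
The plan is to prove the two implications separately, leaning on Theorem~\ref{t3.5} and Theorem~\ref{t4.1} exactly as in the proof of Proposition~\ref{p3.6}.

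For the implication $\ref{con4.8:s1} \Rightarrow \ref{con4.8:s2}$, suppose $d$ is a weakly rigid semimetric on $X = A \cup B$ realizing $\Gamma = \Gamma_{X, d}(A)$ under the stated equivalence. First I would observe that since $(X, d) \in \mathbf{WR}$, Theorem~\ref{t4.1} (applied as in Theorem~\ref{t4.2}, statement~\ref{t4.2:s3}) guarantees that every $b \in B = X \setminus A$ has a \emph{unique} best approximation in the proximinal set $A$; together with \eqref{p3.6:e1}-type hypothesis this forces $\deg_{\Gamma} b = 1$ for every $b \in B$. Next, repeating the argument in the proof of Proposition~\ref{p3.6} verbatim on the graph $\Gamma$ itself (not just on $\Gamma'$): for each $a^* \in A$ with $\deg_\Gamma(a^*) > 0$, the set $B^*(a^*)$ of neighbours of $a^*$, together with $a^*$, spans a star with center $a^*$, and these stars are vertex-disjoint because distinct centers in $A$ cannot share a neighbour $b$ (that $b$ would then have two best approximations). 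Isolated vertices of $\Gamma$ lie in $A$ only, since every $b\in B$ has degree $1$; an isolated vertex $a\in A$ is a (degenerate) star with center $a$ if one allows the one-vertex graph, but since the conjecture says "every connected component is a star with center $a\in A$" and a star requires at least two vertices, here I would point out that the hypothesis that $A$ is proximinal and $B \ne \varnothing$ together with \eqref{p3.6:e1} forces $\dist(b, A)$ to be attained, so $\Gamma$ has no isolated vertices in $B$; isolated vertices in $A$ are harmless and I would note that the intended reading treats each such vertex as a trivial component, or else observe they cannot occur if one normalises. Finally $|V(\Gamma)| = |X| \le \mathfrak{c}$ follows from the injectivity of $u \mapsto d(u, v_0)$ for fixed $v_0$, exactly as in the proofs of Theorems~\ref{t3.2} and~\ref{t4.1}.

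For the converse $\ref{con4.8:s2} \Rightarrow \ref{con4.8:s1}$, I would build $d$ by a layered construction in the spirit of the proof of Theorem~\ref{t4.1}. Let $\{\Gamma_i\}_{i \in I}$ be the star components of $\Gamma$ with centers $a_i \in A$. Put $X = A \cup B$ and choose distances as follows: assign $d(a, b) = 1$ whenever $\{a,b\} \in E(\Gamma)$; for the remaining pairs $(a, b) \in A \times B$ with $\{a,b\} \notin E(\Gamma)$ assign injectively chosen values in a band $(1, 3/2]$, taking care that for each fixed $b \in B$ its unique edge-partner stays the strict minimum of $\{d(a,b) : a \in A\}$ (so $A$ is proximinal and $b$'s best approximation in $A$ is unique) and that for each fixed $a\in A$ of positive degree all its $\Gamma$-edges realise its minimum distance to $B$; put $d|_{A\times A}$ equal to a strongly rigid metric with values in $(3/2, 7/4]$ and $d|_{B\times B}$ equal to a strongly rigid metric with values in $(7/4, 2]$. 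All nonzero distances lie in $(1,2]$, so the triangle inequality is automatic and $d$ is a metric. Weak rigidity is checked as in Theorem~\ref{t4.1}: an isosceles triangle $d(x_2,x_1)=d(x_2,x_3)$ with, say, $x_2\in A$ forces (by the band separation) either both $x_1,x_3$ in $A$ (excluded since $d_A$ is strongly rigid) or both in $B$ with both distances equal to $1$, i.e.\ both $\{x_2,x_1\},\{x_2,x_3\}\in E(\Gamma)$, contradicting $\deg_\Gamma(b)=1$ read on the other side — wait, here one uses instead that $x_2$ would have two $\Gamma$-neighbours, which is allowed for a center; so I would refine the band assignment so that among the distances from $a_i$ to $B$, the value $1$ is attained only on $E(\Gamma)$ and is approached but the construction still separates — the clean fix is to make $d(a_i, b) = 1$ for $\{a_i,b\}\in E(\Gamma)$ impossible to cause an isosceles triangle by instead realising $\dist(a_i, B)$ as an infimum not attained except at edges, as is done in the null-graph case of Theorem~\ref{t3.2}; alternatively one keeps attained values but then isosceles triangles with apex at a center \emph{do} occur, which means weak rigidity genuinely fails and the band construction above is wrong. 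This is the crux.

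The main obstacle is precisely this last point: a center $a_i$ of degree $\ge 2$ is adjacent to two vertices $b, b' \in B$ with $d(a_i, b) = d(a_i, b') = \dist(A,B)$, producing an isosceles triangle $b\,a_i\,b'$ and so $d \notin \mathbf{WR}$. Thus one cannot realise the edges by a common attained minimal value; instead I would realise $\dist(b, A)$ for each $b$ as a value attained \emph{only} at the pair $\{a_i, b\}$ while the "proximinal common value" $\dist(A, B)$ is an \emph{infimum} that is never attained, taken along a sequence of non-edge pairs whose distances strictly decrease to it — mimicking the set $D$ of \eqref{t3.2:e4} from Theorem~\ref{t3.2}. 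Concretely, within each star component I would give $a_i$'s edges to $B$ pairwise distinct values in a tiny band just above the global infimum, arranged so each is the strict minimum over $B$ for the corresponding $b$, while squeezing infinitely many non-edge distances below all of them toward the infimum (this is where $|V(\Gamma)| \le \mathfrak{c}$ and the fact that, if $\Gamma$ has edges, $A$ and $B$ need not be infinite but the continuum bound still leaves room in $[0,\infty)$ get used). Then $d \in \mathbf{WR}$ because all distances between distinct points are pairwise distinct except the deliberately-placed coincidences $d(a,b)=d(a',b')$ with $\{a,b,a',b'\}$ a $4$-set, which never form an isosceles triple; $A$ is proximinal with each $b$'s best approximation its unique $\Gamma$-partner; $B$ is a set one still has to check is not required to be proximinal (the conjecture only asks $A$ proximinal, so this is fine); and $\Gamma = \Gamma_{X,d}(A)$ under \eqref{p3.6:e1} by construction. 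I would also handle the degenerate sub-case $E(\Gamma) = \varnothing$ by invoking Theorem~\ref{t3.2} or Theorem~\ref{t4.1} directly, and I would double-check the reading of "star" for one-vertex components of $A$, flagging that either the definition of star is relaxed or such components are shown not to arise.
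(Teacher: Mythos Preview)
The statement you are attempting to prove is labeled a \emph{Conjecture} in the paper and is posed as an open problem at the end of Section~\ref{sec4}; the paper contains no proof of it. There is therefore nothing to compare your proposal against, but I can comment on the proposal itself.

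For \ref{con4.8:s1} $\Rightarrow$ \ref{con4.8:s2} your argument (essentially the proof of Proposition~\ref{p3.6}) is fine, and you are right to flag the isolated-vertex issue. In fact that issue is fatal to the implication as literally written: take $A=\{a_1,a_2\}$, $B=\{b\}$ with a strongly rigid $d$ satisfying $d(a_1,b)<d(a_2,b)$; then \ref{con4.8:s1} holds but $a_2$ is an isolated component of $\Gamma$, which is not a star under the paper's definition (stars have $|V(S)|\geqslant 2$). So the conjecture needs either a relaxed notion of star or an extra clause allowing isolated vertices in $A$.

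For \ref{con4.8:s2} $\Rightarrow$ \ref{con4.8:s1}$ $ your proposal has a genuine error rooted in a conflation of two different graphs. The edges of $\Gamma_{X,d}(A)$ record the condition $d(a,b)=\dist(b,A)$, which is a \emph{per-$b$} quantity, not the global $\dist(A,B)$ that defines $G_X(A,B)$. Your initial ``all edges get value $1$'' imports the wrong definition, and your repair (``squeezing infinitely many non-edge distances below all of them toward the infimum'') is backwards: pushing non-edge $A$--$B$ distances below the edge distances would make those non-edges the best approximations and destroy the edge characterisation. No infimum trick is needed. Since $|X|\leqslant\mathfrak{c}$, pick any injective $f\colon E(K_{|X|})\to(1,2]$ with $f(e)\in(1,\tfrac{3}{2}]$ for $e\in E(\Gamma)$ and $f(e)\in(\tfrac{3}{2},2]$ otherwise, and set $d(x,y)=f(\{x,y\})$. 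Then $d$ is strongly rigid (hence in $\mathbf{WR}$), the triangle inequality holds because all nonzero distances lie in $(1,2]$, each $b\in B$ has $\dist(b,A)=d(a_b,b)$ attained uniquely at its $\Gamma$-neighbour $a_b$, so $A$ is proximinal and the edge condition holds; your worry about an isosceles triangle at a centre $a_i$ evaporates because its edges now carry pairwise distinct values.
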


\section{Uniqueness of the best proximity pair}
\label{sec5}

The goal of the section is to describe the semimetric spaces belonging to \(\mathbf{UBPP}\).

\begin{lemma}\label{l2.6}
Let \((Y, d) \in \mathbf{UBPP}\) be a four-point semimetric space. Then the inequality
\begin{equation}\label{l2.6:e1}
|D(Y)| \geqslant 5
\end{equation}
holds; and there is a unique vertex \(v^1 = \{x^1, y^1\} \in V(Di_Y)\) such that
\begin{equation}\label{l2.6:e2}
d(x^1, y^1) = \min D(Y);
\end{equation}
and there is a unique \(v^2 = \{x^2, y^2\} \in V(Di_Y)\) such that \((v^2, v^1)\) is an arc of \(Di_Y\),
\begin{equation}\label{l2.6:e3}
(v^2, v^1) \in E(Di_Y).
\end{equation}
\end{lemma}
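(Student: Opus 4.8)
The plan is to lean on the inclusion $\mathbf{UBPP} \subseteq \mathbf{WR}$ from Corollary~\ref{c3.3}, so that $(Y,d)$ is weakly rigid: no triangle has two equal sides. Since $|Y| = 4$, two edges of $K_{|Y|}$ either share a vertex or form a perfect matching, and in the first case weak rigidity forces them to have different lengths. Hence equalities among the six distances of $Y$ can only occur between the two opposite edges of one of the three perfect matchings of $K_{|Y|}$, and $|D(Y)| = 6 - k$, where $k \in \{0,1,2,3\}$ counts the matchings with coincident opposite edges. To get~\eqref{l2.6:e1} I would rule out $k \geqslant 2$: after relabelling we may assume $d(p,q) = d(r,s) =: \alpha$ and $d(p,s) = d(q,r) =: \beta$ for $Y = \{p,q,r,s\}$, where $\alpha \neq \beta$ by weak rigidity applied to the triangle $\{p,q,s\}$. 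Taking $A := \{p,r\}$ and $B := \{q,s\}$ (finite, hence proximinal), the four $A$-to-$B$ distances are $\alpha,\beta,\beta,\alpha$, so $\dist(A,B) = \min\{\alpha,\beta\}$ is attained at the two distinct pairs $(p,q),(r,s)$ (if $\alpha<\beta$) or $(p,s),(r,q)$ (if $\beta<\alpha$), contradicting $(Y,d)\in\mathbf{UBPP}$. Thus $k\leqslant 1$ and $|D(Y)|\geqslant 5$.

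For $v^1$, existence is immediate since $D(Y)$ is finite and nonempty, so I would only prove uniqueness. If two distinct edges $\{x^1,y^1\}$, $\{x^2,y^2\}$ both had length $m := \min D(Y)$, they would be vertex-disjoint by weak rigidity, whence $Y = \{x^1,y^1,x^2,y^2\}$; then $A := \{x^1,x^2\}$, $B := \{y^1,y^2\}$ give $\dist(A,B) = m$ attained at the two distinct pairs $(x^1,y^1),(x^2,y^2)$, which is impossible. Consequently $v^1$ is the unique $\preccurlyeq_d$-minimal edge, i.e. the bottom element of the poset in Remark~\ref{r1.9}, and by Definition~\ref{d1.9} the arcs of $Di_Y$ ending at $v^1$ are exactly the covers of this bottom element, that is, the edges of length $d_2 := \min\bigl(D(Y) \setminus \{m\}\bigr)$ (well defined since $|D(Y)| \geqslant 5$). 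So it remains to show that exactly one edge has length $d_2$; there is at least one, and if there were two, $e = \{a,b\}$ and $e' = \{c,d\}$, weak rigidity would make them disjoint, so $Y = \{a,b,c,d\}$, and since $m < d_2$ the minimal edge $v^1$ joins the two pairs, say $v^1 = \{a,c\}$. Then with $A := \{a,c\}$, $B := \{b,d\}$ one has $d(a,b) = d(c,d) = d_2$, while $d(a,d)$ and $d(c,b)$ equal neither $m$ (realised only by $\{a,c\}$, already shown) nor $d_2$ (that would repeat a side in the triangle $\{a,b,d\}$, resp. $\{b,c,d\}$), hence they exceed $d_2$; therefore $\dist(A,B) = d_2$ is attained at the distinct pairs $(a,b),(c,d)$, which is impossible. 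Hence $v^2$ exists and is unique.

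The routine parts are the verifications that the two-point/two-point partitions above indeed realise $\dist(A,B)$ at two pairs, and the translation of Definition~\ref{d1.9} into ``covering relation of the bottom of the Hasse diagram''. The genuinely delicate point is the final step of the last paragraph, where the already-established uniqueness of the minimal distance is exactly what prevents one of the two ``diagonal'' cross-distances $d(a,d)$, $d(c,b)$ from dropping to $m$ and thereby breaking the construction; without part~(ii) in hand this step does not go through, so the order (i) $\to$ (ii) $\to$ (iii) is essential.
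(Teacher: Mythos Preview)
Your proof is correct. The overall strategy—``equal distances force a perfect matching of $K_{|Y|}$ by weak rigidity, and a matching with equal opposite edges yields two best proximity pairs''—is the same engine that drives the paper's proof, but you deploy it differently.

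The paper argues in the order $v^1 \to v^2 \to |D(Y)|\geqslant 5$: it first pins down the unique minimal edge and its unique cover, and only then shows $|D(Y)|\leqslant 4$ is impossible by a case split on whether $v^1$ and $v^2$ share a vertex (the ``tetrahedron'' versus ``quadrangle'' pictures). You instead invoke Corollary~\ref{c3.3} at the outset to get $(Y,d)\in\mathbf{WR}$, observe that equalities among the six distances can only occur within one of the three perfect matchings, and prove $|D(Y)|\geqslant 5$ directly by showing that two such ``tied'' matchings already violate $\mathbf{UBPP}$. This matching-count argument is more structural and avoids the paper's case analysis entirely; on the other hand, the paper's ordering makes the $|D(Y)|$ bound a consequence of the $v^1,v^2$ uniqueness rather than an independent first step, so your closing remark that the order is ``essential'' applies to your route but not to the paper's. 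Both approaches are short; yours is a little cleaner for~\eqref{l2.6:e1}, while the paper's handling of the $v^2$ step is marginally more direct because it does not need to locate $v^1$ among the four cross-edges.
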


\begin{proof}
Since \(D(Y)\) is finite and nonempty, it follows from Definition~\ref{d1.9} that there is a vertex \(\{x^1, y^1\}\) of \(Di_Y\) for which \eqref{l2.6:e2} holds. Suppose we can find \(\{x_*^1, y_*^1\} \in V(Di_Y)\) such that \(d(x_*^1, y_*^1) = d(x^1, y^1)\) and \(\{x_*^1, y_*^1\} \neq \{x^1, y^1\}\). If the sets \(\{x_*^1, y_*^1\}\) and \(\{x^1, y^1\}\) are disjoint, then these sets are different edges of the proximinal graph \(G_Y(A, B)\) with \(A = \{x_*^1, x^1\}\) and \(B = \{y_*^1, y^1\}\) contrary to \((Y, d) \in \mathbf{UBPP}\). For the case when \(\{x_*^1, y_*^1\} \cap \{x^1, y^1\} \neq \varnothing\), we see that \(Y\) contains an ``isosceles triangle'', contrary to Proposition~\ref{p2.4}. Thus, there is the unique \(v^1 = \{x^1, y^1\}\) which satisfies \eqref{l2.6:e2}.

Let us prove the existence and uniqueness of \(v^2 = \{x^2, y^2\} \in V(Di_Y)\) which satisfies \eqref{l2.6:e3}. Since \(v^1\) is unique, we can find \(x\), \(y \in Y\) such that \(d(x, y) > d(x^1, y^1)\). Thus, \(D(Y) \setminus \{d(x^1, y^1)\}\) is finite and nonempty. Hence, \(D(Y) \setminus \{d(x^1, y^1)\}\) contains the upper cover \(t^*\) of the number \(d(x^1, y^1)\). Let \(x^2\), \(y^2\) be some points of \(Y\) for which \(t^* = d(x^2, y^2)\) holds. Then, by Definition~\ref{d1.9}, we have \eqref{l2.6:e3} for \(v^2 = \{x^2, y^2\}\).

Suppose now that there is \(\{x_*^2, y_*^2\}\) such that
\begin{equation}\label{l2.6:e5}
\{x_*^2, y_*^2\} \neq \{x^2, y^2\}.
\end{equation}
and \eqref{l2.6:e3} holds with \(v^2 = \{x_*^2, y_*^2\}\). As before, using Proposition~\ref{p2.4} and \eqref{l2.6:e5} we can show that \(\{x_*^2, y_*^2\} \cap \{x^2, y^2\} = \varnothing\). Hence, the equality \(Y = \{x_*^2, y_*^2\} \cup \{x^2, y^2\}\) holds. We evidently have \(\{x^2, y^2\} \neq \{x^1, y^1\} \neq \{x_*^2, y_*^2\}\). Consequently, there are points \(a_1\), \(a_1^* \in Y\) such that
\[
\{a_1\} = \{x^2, y^2\} \setminus \{x^1, y^1\} \quad \text{and} \quad \{a_1^*\} = \{x_*^2, y_*^2\} \setminus \{x^1, y^1\}.
\]
Write \(A = \{a_1, a_1^*\}\) and \(B = \{x^1, y^1\}\). Then the proximinal graph \(G_Y(A, B)\) contains the different edges \(\{x^2, y^2\}\) and \(\{x_*^2, y_*^2\}\) contrary to \((Y, d) \in \mathbf{UBPP}\). Thus, there is the unique \(v^2 \in V(Di_Y)\) which satisfies \eqref{l2.6:e3}.

Let us prove inequality \eqref{l2.6:e1}. If \eqref{l2.6:e1} does not hold, then we have
\begin{equation}\label{l2.6:e6}
|D(Y)| \leqslant 4.
\end{equation}
We will show that \eqref{l2.6:e6} implies \((Y, d) \notin \mathbf{UBPP}\).

Suppose first that
\begin{equation}\label{l2.6:e7}
\{x^1, y^1\} \cap \{x^2, y^2\} \neq \varnothing,
\end{equation}
where \(\{x^1, y^1\} = v^1\) and \(\{x^2, y^2\} = v^2\) satisfy \eqref{l2.6:e2} and \eqref{l2.6:e3}, respectively. Since \(v^1\) and \(v^2\) are different vertices of \(Di_Y\), we may also assume that \(y^1 \neq y^2\), which together with \eqref{l2.6:e7} implies the equality \(x^1 = x^2\). The set \(Y \setminus \{x^1, y^1, y^2\}\) contains the unique point \(p\). Let us consider the family \(F = \{d(p, x^1), d(p, y^1), d(p, y^2)\}\) of all distances from the point \(p\) to the points \(x^1\), \(y^1\), \(y^2\) (see Figure~\ref{fig4}).

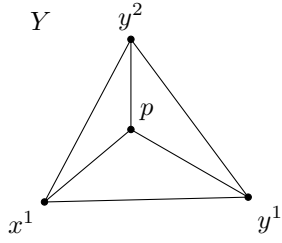
\begin{figure}[h]
\centering
\begin{tikzpicture}[scale=0.6]
\coordinate [label=below left:{$x^1$}] (A) at (220:2.5cm);
\coordinate [label=below right:{$y^1$}] (B) at (-30:3cm);
\coordinate [label=above:{$y^2$}] (C) at (90:2cm);
\coordinate [label=above right:{$p$}] (P) at (0, 0);
\draw (A) -- (B) -- (C) -- (A);
\draw (A) -- (P) -- (B);
\draw (C) -- (P);
\draw [fill, black] (A) circle (2pt);
\draw [fill, black] (B) circle (2pt);
\draw [fill, black] (C) circle (2pt);
\draw [fill, black] (P) circle (2pt);

\draw (-2, 2) node[above] {\(Y\)};
\end{tikzpicture}
\caption{The edges \(\{x^1, y^1\}\) and \(\{x^2, y^2\}\) of the complete graph \(K_{|Y|}\) are no parallel.}\label{fig4}
\end{figure}

The uniqueness \(v^1 = \{x^1, y^1\}\) and \(v^2 = \{x^2, y^2\}\) implies that all these distances are strictly greater than \(d(x^1, y^1)\) and \(d(x^2, y^2)\). Consequently, using inequality~\eqref{l2.6:e6}, we see that \(F\) contains at most two different elements. Hence, at least one from the ``triangles'' \((p, x^1, y^1)\), \((p, x^1, y^2)\) and \((p, y^2, y^1)\) is isosceles, which contradicts \((Y, d) \in \mathbf{UBPP}\) by Proposition~\ref{p2.4}. Thus, \(\{x^1, y^1\}\) and \(\{x^2, y^2\}\) are disjoint, \(\{x^1, y^1\} \cap \{x^2, y^2\} = \varnothing\). In this case, instead of the ``tetrahedron'' depicted by Figure~\ref{fig4}, we consider the ``quadrangle'' with diagonals \(\{x^1, y^1\}\) and \(\{x^2, y^2\}\) (see Figure~\ref{fig5}).

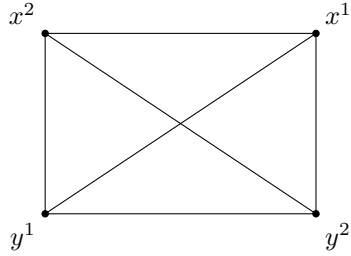
\begin{figure}[h]
\centering
\begin{tikzpicture}[scale=0.6]
\def\xx{3}
\def\yy{2}
\coordinate [label=above right:{$x^1$}] (x1) at (\xx, \yy);
\coordinate [label=below left:{$y^1$}] (y1) at (-\xx, -\yy);
\coordinate [label=above left:{$x^2$}] (x2) at (-\xx, \yy);
\coordinate [label=below right:{$y^2$}] (y2) at (\xx, -\yy);
\draw (x1) -- (x2) -- (y1) -- (y2) -- (x1) -- (y1);
\draw (x2) -- (y2);
\draw [fill, black] (x1) circle (2pt);
\draw [fill, black] (x2) circle (2pt);
\draw [fill, black] (y1) circle (2pt);
\draw [fill, black] (y2) circle (2pt);
\end{tikzpicture}
\caption{The edges \(\{x^1, y^1\}\) and \(\{x^2, y^2\}\) of the complete graph \(K_{|Y|}\) are parallel.}\label{fig5}
\end{figure}

As above, we make show that the family \(\{d(x^1, x^2), d(x^2, y^1), d(y^1, y^2), d(y^2, x^1)\}\) contains at most two different elements. Moreover, Proposition~\ref{p2.4} implies
\[
d(x^1, x^2) \neq d(x^2, y^1) \neq d(y^1, y^2) \neq d(y^2, x^1) \neq d(x^1, x^2).
\]
Hence, the equalities
\begin{equation}\label{l2.6:e8}
d(x^1, x^2) = d(y^1, y^2) \quad \text{and} \quad d(x^2, y^1) = d(y^2, x^1)
\end{equation}
hold. Let us consider now the proximinal graph \(G_Y(A, B)\) with \(A = \{x^1, y^1\}\) and \(B = \{x^2, y^2\}\). Then, using \eqref{l2.6:e8}, it is easy to show that \(G_Y(A, B)\) has at least two edges, contrary to \((Y, d) \in \mathbf{UBPP}\).
\end{proof}

In what follows, we will denote by \(Di^1\), \(Di^2\), \(Di^3\), \(Di^4\) the corresponding digraphs depicted by Figure~\ref{fig6}.

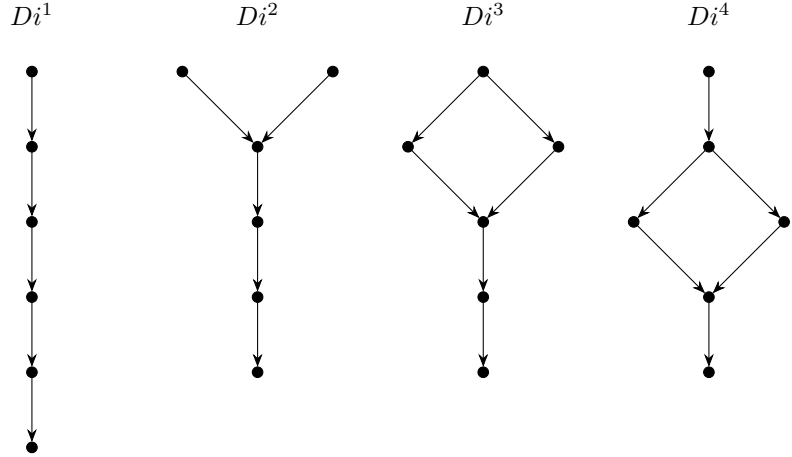
\begin{figure}[htb]
\centering
\begin{tikzpicture}[scale=1,
arrow/.style = {-{Stealth[length=5pt]}, shorten >=2pt}]
\def\xx{1cm}
\def\yy{-1cm}
\def\dr{2pt}
\coordinate (v1) at (0, 0);
\coordinate (v2) at (0, \yy);
\coordinate (v3) at (0, 2*\yy);
\coordinate (v4) at (0, 3*\yy);
\coordinate (v5) at (0, 4*\yy);
\coordinate (v6) at (0, 5*\yy);
\draw [fill, black] (v1) circle (\dr);
\draw [fill, black] (v2) circle (\dr);
\draw [fill, black] (v3) circle (\dr);
\draw [fill, black] (v4) circle (\dr);
\draw [fill, black] (v5) circle (\dr);
\draw [fill, black] (v6) circle (\dr);
\draw [arrow] (v1) -- (v2);
\draw [arrow] (v2) -- (v3);
\draw [arrow] (v3) -- (v4);
\draw [arrow] (v4) -- (v5);
\draw [arrow] (v5) -- (v6);
\draw (0, 0.5) node[above] {\(Di^1\)};

\begin{scope}[xshift=3cm]
\coordinate (v1) at (-\xx, 0);
\coordinate (v2) at (\xx, 0);
\coordinate (v3) at (0, \yy);
\coordinate (v4) at (0, 2*\yy);
\coordinate (v5) at (0, 3*\yy);
\coordinate (v6) at (0, 4*\yy);
\draw [fill, black] (v1) circle (\dr);
\draw [fill, black] (v2) circle (\dr);
\draw [fill, black] (v3) circle (\dr);
\draw [fill, black] (v4) circle (\dr);
\draw [fill, black] (v5) circle (\dr);
\draw [fill, black] (v6) circle (\dr);
\draw [arrow] (v1) -- (v3);
\draw [arrow] (v2) -- (v3);
\draw [arrow] (v3) -- (v4);
\draw [arrow] (v4) -- (v5);
\draw [arrow] (v5) -- (v6);
\draw (0, 0.5) node[above] {\(Di^2\)};
\end{scope}

\begin{scope}[xshift=6cm]
\coordinate (v1) at (0, 0);
\coordinate (v2) at (-\xx, \yy);
\coordinate (v3) at (\xx, \yy);
\coordinate (v4) at (0, 2*\yy);
\coordinate (v5) at (0, 3*\yy);
\coordinate (v6) at (0, 4*\yy);
\draw [fill, black] (v1) circle (\dr);
\draw [fill, black] (v2) circle (\dr);
\draw [fill, black] (v3) circle (\dr);
\draw [fill, black] (v4) circle (\dr);
\draw [fill, black] (v5) circle (\dr);
\draw [fill, black] (v6) circle (\dr);
\draw [arrow] (v1) -- (v2);
\draw [arrow] (v1) -- (v3);
\draw [arrow] (v2) -- (v4);
\draw [arrow] (v3) -- (v4);
\draw [arrow] (v4) -- (v5);
\draw [arrow] (v5) -- (v6);
\draw (0, 0.5) node[above] {\(Di^3\)};
\end{scope}

\begin{scope}[xshift=9cm]
\coordinate (v1) at (0, 0);
\coordinate (v2) at (0, \yy);
\coordinate (v3) at (-\xx, 2*\yy);
\coordinate (v4) at (\xx, 2*\yy);
\coordinate (v5) at (0, 3*\yy);
\coordinate (v6) at (0, 4*\yy);
\draw [fill, black] (v1) circle (\dr);
\draw [fill, black] (v2) circle (\dr);
\draw [fill, black] (v3) circle (\dr);
\draw [fill, black] (v4) circle (\dr);
\draw [fill, black] (v5) circle (\dr);
\draw [fill, black] (v6) circle (\dr);
\draw [arrow] (v1) -- (v2);
\draw [arrow] (v2) -- (v3);
\draw [arrow] (v2) -- (v4);
\draw [arrow] (v3) -- (v5);
\draw [arrow] (v4) -- (v5);
\draw [arrow] (v5) -- (v6);
\draw (0, 0.5) node[above] {\(Di^4\)};
\end{scope}
\end{tikzpicture}
\caption{The digraphs \(Di_Y\) of four-point spaces \((Y, d) \in \mathbf{UBPP}\).} \label{fig6}
\end{figure}

\begin{lemma}\label{t2.7}
Let \((Y, d) \in \mathbf{UBPP}\) be a four-point semimetric space. Then the digraph \(Di_{Y}\) is isomorphic to one of the digraphs \(Di^1\), \(Di^2\), \(Di^3\), \(Di^4\).
\end{lemma}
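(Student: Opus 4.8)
The plan is to read off the structure of $Di_Y$, which by Remark~\ref{r1.9} is the Hasse diagram of the poset $\bigl(E(K_{|Y|}), \preccurlyeq_d\bigr)$, directly from Lemma~\ref{l2.6}. Since $|Y| = 4$, we have $|E(K_{|Y|})| = 6$, and Lemma~\ref{l2.6} gives $5 \leqslant |D(Y)| \leqslant 6$. So only two cases occur, according to whether some value in $D(Y)$ is realized by two edges of $K_{|Y|}$ or not.

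If $|D(Y)| = 6$, all six interpoint distances are pairwise distinct, hence any two elements of $E(K_{|Y|})$ are $\preccurlyeq_d$-comparable, so $\bigl(E(K_{|Y|}), \preccurlyeq_d\bigr)$ is a six-element chain and $Di_Y$ is isomorphic to the directed path $Di^1$. Suppose now $|D(Y)| = 5$. Counting edges by their $d$-length, $6 = |E(K_{|Y|})| = \sum_{t \in D(Y)} m_t$, where $m_t \geqslant 1$ is the number of edges of $K_{|Y|}$ of length $t$; since $|D(Y)| = 5$, exactly one value $t$ has $m_t = 2$ and all others have $m_t = 1$. Listing $D(Y) = \{t_1, t_2, t_3, t_4, t_5\}$ with $t_1 < t_2 < t_3 < t_4 < t_5$, uniqueness of $v^1$ in Lemma~\ref{l2.6} rules out $t = t_1$, and uniqueness of $v^2$ rules out $t = t_2$, since if $t = t_2$ then both edges of length $t_2$ would be upper covers of $v^1$ in $Di_Y$ (no length lies strictly between $t_1$ and $t_2$). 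Hence $t \in \{t_3, t_4, t_5\}$. In each of these cases $\bigl(E(K_{|Y|}), \preccurlyeq_d\bigr)$ is the six-element chain with its element at level $t$ split into two incomparable copies, and a direct computation of cover relations shows that $Di_Y$ is isomorphic to $Di^4$ if $t = t_3$, to $Di^3$ if $t = t_4$, and to $Di^2$ if $t = t_5$.

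The one point needing care is the final cover-relation computation: one must check that each of the two copies of the level-$t$ edge covers exactly the unique edge at the next lower level and is covered by exactly the unique edge at the next higher level (the latter neighbour being absent when $t = t_5$), and that no further covers appear; this matches the pictures of $Di^2$, $Di^3$, $Di^4$ in Figure~\ref{fig6}. This is routine bookkeeping rather than a genuine obstacle: all the real content has already been extracted, in Lemma~\ref{l2.6}, from the ``no isosceles triangle'' and ``at most one best proximity pair'' restrictions on $(Y, d)$.
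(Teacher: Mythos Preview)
Your proof is correct and follows essentially the same route as the paper: both split into the cases $|D(Y)| = 6$ (giving $Di^1$) and $|D(Y)| = 5$, then use Lemma~\ref{l2.6} to force the unique doubled distance to occur at one of the three top levels, yielding $Di^2$, $Di^3$, or $Di^4$. Your counting argument for the multiplicity structure is actually a bit cleaner than the paper's, which inserts a redundant pigeonhole step to rule out a value of multiplicity three before observing that $\sum m_t = 6$ over five positive terms already forces the pattern $(1,1,1,1,2)$.
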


\begin{proof}
If \(Y\) is strongly rigid, then it follows directly from Definitions~\ref{d1.0} and \ref{d1.5} that \(Di_Y\) is isomorphic to the digraph \(Di^1\).

Let us consider the case when \(Y\) is not strongly rigid. We must prove that \(Di_Y\) is isomorphic to one of the digraphs \(Di^2\), \(Di^3\), \(Di^4\). Let \(D(Y)\) be the set of all nonzero distances between points of \(Y\). Since \(|Y| = 4\) holds, we have \(|D(Y)| \leqslant 6\). Moreover, Lemma~\ref{l2.6} implies the inequality \(|D(Y)| \geqslant 5\). Consequently, the equality
\begin{equation}\label{t2.7:e1}
|D(Y)| = 5
\end{equation}
holds, i.e., we have \(D(Y) = \{d^1, \ldots, d^5\}\) for some \(d^i \in (0, \infty)\), \(i = 1, \ldots, 5\). Without loss of generality, we can assume \(d^1 < d^2 < \ldots < d^5\). Using~Lemma~\ref{l2.6} we see that there is the unique vertex \(v^1 = \{x^1, y^1\} \in V(Di_Y)\) such that \(d(x^1, y^1) = d^1\) and, in addition, there is the unique \(v^2 = \{x^2, y^2\} \in V(Di_Y)\) such that \(d(x^2, y^2) = d^2\). Since \(Y\) is a four-point set, we have
\begin{equation}\label{t2.7:e2}
|V(Di_Y)| = |E(K_{|Y|})| = |E(K_{|4|})| = 6.
\end{equation}

We claim that, for every \(i = 3\), \(4\), \(5\), the number of all different \(\{x, y\} \in E(K_{|Y|})\), which satisfy the equality \(d(x, y) = d^i\), does not exceed two and this number is equal to two only for one of \(d^3\), \(d^4\), \(d^5\).

Indeed, suppose contrary that there are \(k \in \{3, 4, 5\}\) and pairwise different \(\{x_1, y_1\}\), \(\{x_2, y_2\}\) and \(\{x_3, y_3\}\) such that \(d(x_1, y_1) = d(x_2, y_2) = d(x_3, y_3) = d^k\). Then, using the pigeonhole principle, we can find \(z \in Y\) and \(x\), \(y \in Y\) such that \((z, y, x)\) is a ``isosceles triangle'' with two equal sides having length \(d^k\). It is a contradiction with \((Y, d) \in \mathbf{UBPP}\).

If we can find pairwise distinct \(\{\ol{x}_1, \ol{y}_1\}\), \(\{\ol{x}_2, \ol{y}_2\}\), \(\{\ol{x}_3, \ol{y}_3\}\), \(\{\ol{x}_4, \ol{y}_4\} \in K_{|Y|}\) and different \(i_1\), \(i_2 \in \{3, 4, 5\}\) such that
\[
d(\ol{x}_1, \ol{y}_1) = d(\ol{x}_2, \ol{y}_2) = d^{i_1} \quad \text{and} \quad d(\ol{x}_3, \ol{y}_3) = d(\ol{x}_4, \ol{y}_4) = d^{i_2},
\]
then, using equality~\eqref{t2.7:e2}, we obtain the inequality \(|D(Y)| \leqslant 4\), which contradicts~\eqref{t2.7:e1}. Hence, the above formulated claim is valid. It implies, in particular, that the digraph \(Di_Y\) is isomorphic to the one of the digraphs \(Di^2\), \(Di^3\), \(Di^4\).
\end{proof}

\begin{lemma}\label{l3.7}
Let \((Y, d) \in \mathbf{WR}\) be a four-point semimetric space. If \(Di_Y\) is isomorphic to the one of the digraphs \(Di^1\), \(Di^2\), \(Di^3\), then \((Y, d)\) belongs to \(\mathbf{UBPP}\).
\end{lemma}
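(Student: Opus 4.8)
The plan is to prove the contrapositive: assuming $(Y,d)\in\mathbf{WR}$ with $|Y|=4$ and $(Y,d)\notin\mathbf{UBPP}$, I will show that $Di_Y$ is isomorphic to none of $Di^1$, $Di^2$, $Di^3$. The case $Di^1$ is in fact immediate, since $Di_Y\cong Di^1$ holds precisely when the poset $(E(K_{|Y|}),\preccurlyeq_d)$ is a $6$-element chain, i.e. all six nonzero distances of $Y$ are pairwise distinct, i.e. $(Y,d)\in\mathbf{SR}$; and $\mathbf{SR}\subseteq\mathbf{UBPP}$ by inclusion~\eqref{t3.2:e10}. So the real work is in ruling out $Di^2$ and $Di^3$.

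First I would unpack the failure of $\mathbf{UBPP}$. Since $Y$ is finite, every nonempty subset is proximinal, so there exist disjoint nonempty $A,B\subseteq Y$ together with two distinct best proximity pairs $(a_1,b_1),(a_2,b_2)\in A\times B$, where $d(a_1,b_1)=d(a_2,b_2)=\dist(A,B)=:t$. Weak rigidity forbids these pairs from sharing a coordinate: if $a_1=a_2$, then $a_1,b_1,b_2$ are three distinct points with $d(a_1,b_1)=d(a_1,b_2)$, so $\{a_1,b_1,b_2\}$ is not strongly rigid, contradicting $(Y,d)\in\mathbf{WR}$; the case $b_1=b_2$ is symmetric. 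Hence $a_1\neq a_2$ and $b_1\neq b_2$, so $Y=\{a_1,a_2,b_1,b_2\}$, $A=\{a_1,a_2\}$ and $B=\{b_1,b_2\}$. Finally, $(a_1,b_2),(a_2,b_1)\in A\times B$ give $d(a_1,b_2)\geq t$ and $d(a_2,b_1)\geq t$, and both inequalities are strict, since $d(a_1,b_2)=t$ would make $\{a_1,b_1,b_2\}$ an isosceles triple, again impossible. Thus the failure of $\mathbf{UBPP}$ produces two disjoint edges $\{a_1,b_1\}$, $\{a_2,b_2\}$ of $K_{|Y|}$ carrying the common distance $t$, while the remaining two ``crossing'' edges satisfy $d(a_1,b_2)>t$ and $d(a_2,b_1)>t$.

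Next I would record, using Remark~\ref{r1.9} (so that $Di_Y$ is the Hasse diagram of $(E(K_{|Y|}),\preccurlyeq_d)$), the distance patterns of $Di^2$ and $Di^3$ read off from Figure~\ref{fig6}: in each case $|D(Y)|=5$, exactly one distance value is attained by two edges and the other four values are attained once each; for $Di^2$ this repeated value equals $\max D(Y)$, while for $Di^3$ it is the second largest element of $D(Y)$ and $\max D(Y)$ is attained exactly once. Now compare with the configuration above, in which the value $t$ is repeated. If $Di_Y\cong Di^2$, then $t=\max D(Y)$, contradicting $t<d(a_1,b_2)\in D(Y)$. If $Di_Y\cong Di^3$, then $t$ is the second largest element of $D(Y)$, so the only element of $D(Y)$ strictly greater than $t$ is $\max D(Y)$; since $d(a_1,b_2)>t$ and $d(a_2,b_1)>t$, both of these values equal $\max D(Y)$, whence $\max D(Y)$ is attained by the two distinct edges $\{a_1,b_2\}$ and $\{a_2,b_1\}$, contradicting that it is attained once. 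All three cases being impossible, the lemma follows.

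The step I expect to require the most care is the reduction in the second paragraph: extracting, from the bare non-membership $(Y,d)\notin\mathbf{UBPP}$, the exact four-point picture — two parallel edges of common minimal length $t$ together with two strictly longer crossing edges — and verifying that every appeal to ``no isosceles triangle'' is made for a genuine triple of three distinct points. Once this picture is in hand, matching it against the three Hasse diagrams is a short bookkeeping argument.
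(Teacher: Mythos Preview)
Your proof is correct. The contrapositive organization is a modest but genuine departure from the paper's direct argument, so a short comparison is in order.

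The paper treats each digraph in turn. For \(Di^1\) it argues exactly as you do. For \(Di^2\) it takes an arbitrary proximinal graph \(G_Y(A,B)\), reduces to \(A\cup B=Y\) via Proposition~\ref{p2.4}, bounds \(3\leqslant |D_{A,B}|\leqslant 4\), deduces \(\dist(A,B)<\diam Y\), and then uses that \(\diam Y\) is the only repeated value to conclude the minimum is attained once; \(Di^3\) is dismissed as ``the same way''. Your route instead extracts, once and for all, a single structural consequence of \((Y,d)\notin\mathbf{UBPP}\) under \(\mathbf{WR}\): two disjoint edges at a common value \(t\) together with two strictly longer crossing edges. This uniform picture lets you dispatch \(Di^2\) and \(Di^3\) by a one-line comparison with the position of the unique repeated value (top, respectively second-from-top), without the \(|D_{A,B}|\) bookkeeping. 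The gain is economy and uniformity; the paper's direct approach, on the other hand, makes explicit the positive statement that every \(G_Y(A,B)\) has exactly one edge, which is slightly more informative. Your caution about verifying that each ``no isosceles triangle'' invocation involves three genuinely distinct points, and that \(\{a_1,b_2\}\neq\{a_2,b_1\}\), is well placed and correctly handled.
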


\begin{proof}
If \(Di_Y\) is isomorphic to \(Di^1\), then \(|D(Y)| = 6\) holds and, consequently, we have
\begin{equation}\label{l3.7:e1}
|D(Y)| = |V(Di_Y)|.
\end{equation}
Using Definition~\ref{d1.0} and the formula \(D(Y) = \bigl\{d(x, y) \colon x \neq y \text{ and } x, y \in Y\bigr\}\), we see that \eqref{l3.7:e1} holds if and only if \((Y, d)\) is strongly rigid. Hence, \((Y, d)\) belongs to \(\mathbf{UBPP}\) by Corollary~\ref{c3.3}.

Let \(Di_Y\) be isomorphic to \(Di^2\). We must show that each proximinal graph \(G_Y(A, B)\) has exactly one edge. If \(A \cup B\) is a proper subset of \(Y\), then it follows from Proposition~\ref{p2.4} because \((Y, d) \in \mathbf{WR}\). Now if \(A \cup B = Y\), then the set \(D_{A, B} := \bigl\{d(a, b) \colon a \in A \text{ and } b \in B\bigr\}\) contains at least three and at most four elements,
\begin{equation}\label{l3.7:e2}
3 \leqslant |D_{A, B}| \leqslant 4.
\end{equation}
It is clear that \(\dist (A, B)\) is the smallest element in \(D_{A, B}\). Double inequality~\eqref{l3.7:e2} and \(A \cap B = \varnothing\) imply
\begin{equation}\label{l3.7:e3}
\dist (A, B) < \diam (Y),
\end{equation}
where \(\diam (Y) = \max\{d(x, y) \colon x, y \in Y\}\). Since \(Di_Y\) and \(Di^2\) are isomorphic, there is a unique element \(\ol{d} \in D(Y)\) such that \(\ol{d}\) is equal to \(\diam (Y)\) and \(d(x, y) = \ol{d} = d(u, v)\) for distinct \(\{x, y\}\), \(\{u, v\} \in V(Di_Y)\). Consequently, \eqref{l3.7:e3} implies that there are the unique \(a_0 \in A\) and the unique \(b_0 \in B\) such that \(d(a_0, b_0) = \dist (A, B)\). Thus, \(\{a_0, b_0\}\) is the unique edge of \(G_X(A, B)\).

For the case when \(Di_Y\) is isomorphic to \(Di^3\), the membership \((Y, d) \in \mathbf{UBPP}\) can be shown in the same way as in the previous case.
\end{proof}

To describe the situation when a four-point weakly rigid \((Y, d)\) belongs to \(\mathbf{UBPP}\) and \(Di_Y\) is isomorphic to \(Di^4\), we will use the concept of weak similarity introduced in Definition~\ref{d1.11}.

\begin{lemma}\label{l4.4}
Let \((X, d)\) and \((Y, \rho)\) be weakly similar semimetric spaces with a weak similarity \(\Phi \colon X \to Y\). Then, for every proximinal graph \(G_{X, d}(A, B)\), the sets \(\Phi(A)\) and \(\Phi(B)\) are disjoint proximinal subsets of \(Y\), and the proximinal graphs \(G_{X, d}(A, B)\) and \(G_{Y, \rho}(\Phi(A), \Phi(B))\) are isomorphic, and the restriction
\[
\Phi|_{A \cup B} \colon A \cup B \to \Phi(A) \cup \Phi(B)
\]
is an isomorphism of the graphs \(G_{X, d}(A, B)\) and \(G_{Y, \rho}(\Phi(A), \Phi(B))\). In particular, the equivalences
\[
\bigl((X, d) \in \mathbf{UBPP}\bigr) \Leftrightarrow \bigl((Y, \rho) \in \mathbf{UBPP}\bigr) \quad \text{and}\quad \bigl((X, d) \in \mathbf{WR}\bigr) \Leftrightarrow \bigl((Y, \rho) \in \mathbf{WR}\bigr)
\]
are valid.
\end{lemma}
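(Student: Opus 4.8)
The plan is to reduce everything to one structural consequence of Definition~\ref{d1.11}. Let $\psi\colon D(Y)\to D(X)$ be the strictly increasing bijection accompanying the weak similarity $\Phi$, and set $\psi(0):=0$; since $D(X),D(Y)\subseteq(0,\infty)$, the extended map $\psi$ is a strictly increasing bijection of the totally ordered set $\{0\}\cup D(Y)$ onto $\{0\}\cup D(X)$, and from $d(x,y)=\psi(\rho(\Phi(x),\Phi(y)))$ together with the fact that $\Phi$ is a bijection (so $d(x,y)=0\Leftrightarrow x=y\Leftrightarrow\rho(\Phi(x),\Phi(y))=0$) we get $\rho(\Phi(x),\Phi(y))=\psi^{-1}(d(x,y))$ for all $x,y\in X$, with $\psi^{-1}$ again strictly increasing. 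The one elementary observation I will use repeatedly is that a strictly increasing bijection between totally ordered sets preserves attained minima: if $T\subseteq\{0\}\cup D(X)$ is nonempty and $t_0\in T$, then $t_0=\min T$ if and only if $\psi^{-1}(t_0)=\min\psi^{-1}(T)$ --- indeed $t_0\leqslant t$ for all $t\in T$ forces $\psi^{-1}(t_0)\leqslant\psi^{-1}(t)$, and $\psi^{-1}(t_0)\in\psi^{-1}(T)$; the converse follows by applying the same to $\psi$. No continuity is needed, which matters because $\dist(A,B)$ need not belong to $D(X)$.

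Granting this, the main part goes as follows. Since $\Phi$ is injective, $\Phi(A)\cap\Phi(B)=\varnothing$ and $\Phi|_{A\cup B}$ is a bijection of $A\cup B=V(G_{X,d}(A,B))$ onto $\Phi(A)\cup\Phi(B)$. To see that $\Phi(A)$ is proximinal, take $y\in Y$, write $y=\Phi(x)$, and let $a_0\in A$ be a best approximation to $x$ in $A$, so that $d(x,a_0)=\min\{d(x,a)\colon a\in A\}$; applying the observation to $T=\{d(x,a)\colon a\in A\}$ gives $\rho(y,\Phi(a_0))=\psi^{-1}(d(x,a_0))=\min\{\rho(y,a')\colon a'\in\Phi(A)\}$, so $\Phi(a_0)$ is a best approximation to $y$ in $\Phi(A)$; likewise $\Phi(B)$ is proximinal. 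For the edge condition, fix $a\in A$, $b\in B$ and apply the observation to $T=\{d(a',b')\colon (a',b')\in A\times B\}$ and $t_0=d(a,b)$: since $d(a,b)\in T$, the equality $d(a,b)=\dist(A,B)$ means exactly $d(a,b)=\min T$, and under $\psi^{-1}$ this translates to $\rho(\Phi(a),\Phi(b))=\min\psi^{-1}(T)=\dist(\Phi(A),\Phi(B))$. Because a proximinal bipartite graph has no edge inside a part, this shows that $\Phi|_{A\cup B}$ sends edges to edges and non-edges to non-edges, i.e.\ it is an isomorphism of $G_{X,d}(A,B)$ onto $G_{Y,\rho}(\Phi(A),\Phi(B))$ in the sense of Definition~\ref{d1.6}.

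For the two displayed equivalences I would first note that $\Phi^{-1}\colon Y\to X$ is itself a weak similarity, with accompanying function $\psi^{-1}$, since $\rho(y_1,y_2)=\psi^{-1}(d(\Phi^{-1}(y_1),\Phi^{-1}(y_2)))$. Hence every pair of disjoint proximinal subsets of $Y$ is the $\Phi$-image of such a pair in $X$, and the part already proved applies symmetrically. Since $(a_0,b_0)\mapsto\{a_0,b_0\}$ is a bijection between the best proximity pairs for $A,B$ and the edges of $G_{X,d}(A,B)$ (here $A\cap B=\varnothing$ is used), membership in $\mathbf{UBPP}$ is precisely the property that every proximinal graph has at most one edge, which is invariant under graph isomorphism; this yields $(X,d)\in\mathbf{UBPP}\Leftrightarrow(Y,\rho)\in\mathbf{UBPP}$. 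For $\mathbf{WR}$ I would use that a semimetric space is weakly rigid if and only if it has no three distinct points two of whose pairwise distances coincide; as $\Phi$ and $\Phi^{-1}$ carry triples of distinct points to triples of distinct points and $\rho(\Phi(u),\Phi(v))=\psi^{-1}(d(u,v))$ with $\psi^{-1}$ injective and sending nonzero distances to nonzero distances, such a configuration exists in $X$ if and only if it exists in $Y$, giving $(X,d)\in\mathbf{WR}\Leftrightarrow(Y,\rho)\in\mathbf{WR}$.

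I expect the only genuine subtlety --- more conceptual than computational --- to be exactly the handling of the infimum in $\dist(A,B)$ and of best approximations: $\psi$ is merely order preserving, not continuous, and $\dist(A,B)$ may fail to be a value of $d$, so one has to argue with attained minima rather than with infima, which is what the order-isomorphism observation in the first paragraph is designed to do. The remaining steps are routine bookkeeping.
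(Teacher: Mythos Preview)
Your argument is correct and is precisely the direct verification the paper has in mind: the authors' own proof consists of the single remark that \(\Phi^{-1}\) is again a weak similarity and otherwise declares the lemma ``straightforward'', and your write-up supplies exactly those straightforward details, including the \(\Phi^{-1}\) observation. Your care with attained minima versus infima is appropriate and the bookkeeping for the \(\mathbf{UBPP}\) and \(\mathbf{WR}\) equivalences is fine.
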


The proof is straightforward. We only note that the inverse mapping \(\Phi^{-1}\colon Y \to X\) also is a weak similarity.

In the next lemma and in Theorem~\ref{t4.5} below, we denote by \((X^*, \rho^*)\) the four-point semimetric space from Example~\ref{ex3.4}.

\begin{lemma}\label{l4.5}
Let \((Y, d) \in \mathbf{WR}\) be a four-point semimetric space and let \(Di_Y\) be isomorphic to the digraph \(Di^4\). Then \((Y, d)\) belongs to \(\mathbf{UBPP}\) if and only if \((Y, d)\) is not weakly similar to the semimetric space \((X^*, \rho^*)\).
\end{lemma}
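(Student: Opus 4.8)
The plan is to prove the two implications separately, the forward (``weakly similar $\Rightarrow$ not $\mathbf{UBPP}$'') one being routine and the converse being the substance. For the forward direction I would simply invoke Lemma~\ref{l4.4}: a weak similarity of $(Y,d)$ onto $(X^*,\rho^*)$ gives the equivalence $\bigl((Y,d)\in\mathbf{UBPP}\bigr)\Leftrightarrow\bigl((X^*,\rho^*)\in\mathbf{UBPP}\bigr)$, so it suffices to see that $(X^*,\rho^*)\notin\mathbf{UBPP}$; this is the content of Example~\ref{ex3.4}, and an explicit witness is the partition $A=\{a_1^*,b_2^*\}$, $B=\{a_2^*,b_1^*\}$, for which $\dist(A,B)=3$ is attained by the two distinct pairs $(a_1^*,b_1^*)$ and $(b_2^*,a_2^*)$.

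For the converse I would first extract the metric structure of $Y$. Since $Di_Y\cong Di^4$, the proof of Lemma~\ref{t2.7} yields $|D(Y)|=5$; write $D(Y)=\{d^1<d^2<d^3<d^4<d^5\}$. Reading $Di^4$ as a Hasse diagram (Remark~\ref{r1.9}), its only pair of incomparable vertices is the middle one, so there is exactly one repeated distance and it equals $d^3$; say $d(p,q)=d(r,s)=d^3$ with $\{p,q\}\neq\{r,s\}$. As $(Y,d)\in\mathbf{WR}$ contains no isosceles triangle, $\{p,q\}$ and $\{r,s\}$ are disjoint, hence a perfect matching of $Y=\{p_1,p_2,p_3,p_4\}$; after relabelling we may take $d(p_1,p_2)=d(p_3,p_4)=d^3$, and then the four ``sides'' $\{p_1,p_3\},\{p_3,p_2\},\{p_2,p_4\},\{p_4,p_1\}$ carry the values $d^1,d^2,d^4,d^5$ in some cyclic arrangement around the $4$-cycle $p_1p_3p_2p_4$.

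Next I would characterise the failure of $\mathbf{UBPP}$ in terms of that arrangement. Since $Y$ is finite, every subset is proximinal, so the only pairs $(A,B)$ to check are disjoint nonempty pairs. If $\min(|A|,|B|)=1$ or $|A\cup B|\leqslant 3$, weak rigidity (the absence of isosceles triangles, resp. Proposition~\ref{p2.4} applied to the subspace $A\cup B$) forces a unique best proximity pair; thus only the three partitions of $Y$ into two pairs matter. For the ``diagonal'' partition $\{p_1,p_2\}\mid\{p_3,p_4\}$ the four cross distances are the side values, so $\dist(A,B)=d^1$ is attained once. For a ``side'' partition the cross distances are the two diagonals (both $d^3$) together with one pair of opposite sides, of values $\{u,v\}$; hence $\dist(A,B)=\min\{d^3,u,v\}$, which is attained twice — by both diagonals — precisely when $\{u,v\}=\{d^4,d^5\}$, and attained once otherwise. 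Therefore $(Y,d)\notin\mathbf{UBPP}$ if and only if some pair of opposite sides has values $\{d^4,d^5\}$, equivalently the two opposite-side pairs are $\{d^1,d^2\}$ and $\{d^4,d^5\}$.

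Finally, assuming $(Y,d)\notin\mathbf{UBPP}$, I would build the weak similarity. Up to a symmetry of the $4$-cycle (a relabelling of the $p_i$ preserving the pair of diagonals) the arrangement above is unique, so I may assume $d(p_1,p_3)=d^5$, $d(p_2,p_4)=d^4$, $d(p_2,p_3)=d^1$, $d(p_1,p_4)=d^2$. Let $\psi\colon D(X^*)\to D(Y)$ be the (forced) strictly increasing bijection $\psi(k)=d^k$ and let $\Phi\colon Y\to X^*$ be given by $\Phi(p_1)=a_2^*$, $\Phi(p_2)=b_2^*$, $\Phi(p_3)=a_1^*$, $\Phi(p_4)=b_1^*$; checking the six pairs shows $d(x,y)=\psi\bigl(\rho^*(\Phi(x),\Phi(y))\bigr)$ throughout, so $\Phi$ is a weak similarity of $(Y,d)$ and $(X^*,\rho^*)$. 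The main obstacle is the structural step — deducing from $Di_Y\cong Di^4$ together with weak rigidity that the repeated distance is the middle value $d^3$ and that its two edges form a perfect matching, and then organising the short case analysis over the three two-by-two partitions; once the side arrangement is fixed, the map $\Phi$ is essentially forced and its verification is mechanical.
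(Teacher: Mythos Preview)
Your proof is correct and follows essentially the same approach as the paper: both identify that the repeated distance $d^3$ yields a perfect matching, analyse the $2$--$2$ partitions to characterise when $\mathbf{UBPP}$ fails, and then construct the weak similarity to $(X^*,\rho^*)$. Your presentation is more streamlined---where the paper explicitly enumerates eight configurations $(Y,d^{i,j})$ and verifies pairwise isometries among them, you invoke the dihedral symmetry of the $4$-cycle $p_1p_3p_2p_4$ to reduce to a single representative---but the underlying argument is the same.
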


\begin{proof}
Let \((Y, d)\) and \((X^*, \rho^*)\) be weakly similar. By Lemma~\ref{l4.4}, we obtain
\begin{equation}\label{l4.5:e1}
(Y, d) \notin  \mathbf{UBPP}
\end{equation}
because there is a proximinal graph \(G_{X^*, \rho^*}(A^*, B^*)\) with two different edges (see Figure~\ref{fig3.1}).

Conversely, let \eqref{l4.5:e1} hold. We must prove that \((Y, d)\) and \((X^*, \rho^*)\) are weakly similar. Since \(Di_Y\) and \(Di^4\) are isomorphic digraphs, the set \(D(Y)\) has exactly five elements,
\[
D(Y) = \{d^1, \ldots, d^5\},
\]
and we may assume that \(d^1 < d^2 < \ldots < d^5\). Using isomorphism of \(Di_Y\) and \(Di^4\) again, we see that there are exactly two different \(\{a_1, b_1\}\), \(\{a_2, b_2\} \in E(K_{|Y|})\) such that \(d(a_1, b_1) = d(a_2, b_2) = d^3\) and, in addition, if \(j \in \{1, \ldots, 5\}\) and \(j \neq 3\), then there is exactly one \(\{x_j, y_j\} \in E(K_{|Y|})\) which satisfies the equality \(d(x_j, y_j) = d^j\). Consequently, if \(\{x, y\}\) and \(\{u, v\}\) are distinct edges of \(K_{|Y|}\) and \(d(x, y) = d(u, v)\), then the equality
\[
\bigl\{\{a_1, b_1\}, \{a_2, b_2\}\bigr\} = \bigl\{\{x, y\}, \{u, v\}\bigr\}
\]
holds. Using~\eqref{l4.5:e1} we can find nonempty disjoint sets \(A\), \(B \subseteq Y\) such that \(G_Y(A, B)\) has at least two different edges \(\{y_1, y_2\}\) and \(\{y_3, y_4\}\). Since \(G_Y(A, B)\) is proximinal, the equality
\[
d(y_1, y_2) = d(y_3, y_4)
\]
holds. As was shown above, the last equality implies
\[
\bigl\{\{y_1, y_2\}, \{y_3, y_4\}\bigr\} = \bigl\{\{a_1, b_1\}, \{a_2, b_2\}\bigr\}.
\]
From \((Y, d) \in \mathbf{WR}\) and \(d(a_1, b_1) = d(a_2, b_2)\) follows
\begin{equation}\label{l4.5:e2}
\{a_1, b_1\} \cap \{a_2, b_2\} = \varnothing.
\end{equation}
Consequently, we have
\begin{equation}\label{l4.5:e3}
E(G_Y(A, B)) = \bigl\{\{a_1, b_1\}, \{a_2, b_2\}\bigr\}.
\end{equation}
Equalities \eqref{l4.5:e2} and \eqref{l4.5:e3} imply
\begin{equation}\label{l4.5:e4}
|A| \geqslant 2 \quad \text{and} \quad |B| \geqslant 2.
\end{equation}
Since \(A \cap B = \varnothing\) and \(A \cup B \subseteq Y\), from \eqref{l4.5:e4} follows that \(|A| = |B| = 2\) and, consequently,
\begin{equation}\label{l4.5:e5}
V(G_Y(A, B)) = A \cup B = Y.
\end{equation}

\begin{figure}[htb]
\centering
\begin{tikzpicture}[scale=1,
arrow/.style = {-{Stealth[length=5pt]}, shorten >=2pt}]
\def\xx{1.2cm}
\def\yy{0.7cm}
\def\dr{2pt}

\coordinate [label=above left:{$a_1$}] (a1) at (-\xx, \yy);
\coordinate [label=above right:{$a_2$}] (a2) at (\xx, \yy);
\coordinate [label=below right:{$b_1$}] (b1) at (\xx, -\yy);
\coordinate [label=below left:{$b_2$}] (b2) at (-\xx, -\yy);
\draw [fill, black] (a1) circle (\dr);
\draw [fill, black] (a2) circle (\dr);
\draw [fill, black] (b1) circle (\dr);
\draw [fill, black] (b2) circle (\dr);
\draw (a1) -- node [near start, above] {\(d^3\)} (b1);
\draw (a2) -- node [near start, above] {\(d^3\)} (b2);
\draw (a1) -- node [left] {\(A\)} (b2);
\draw (a2) -- node [right] {\(B\)} (b1);

\begin{scope}[xshift=5cm]
\coordinate [label=above left:{$a_1$}] (a1) at (-\xx, \yy);
\coordinate [label=above right:{$a_2$}] (a2) at (\xx, \yy);
\coordinate [label=below right:{$b_1$}] (b1) at (\xx, -\yy);
\coordinate [label=below left:{$b_2$}] (b2) at (-\xx, -\yy);
\draw [fill, black] (a1) circle (\dr);
\draw [fill, black] (a2) circle (\dr);
\draw [fill, black] (b1) circle (\dr);
\draw [fill, black] (b2) circle (\dr);
\draw (a1) -- node [near start, below] {\(d^3\)} (b1);
\draw (a2) -- node [near start, below] {\(d^3\)} (b2);
\draw (a1) -- node [above] {\(A\)} (a2);
\draw (b1) -- node [below] {\(B\)} (b2);
\end{scope}
\end{tikzpicture}
\caption{Step~1. Choice of parts \(A\) and \(B\).} \label{fig7}
\end{figure}
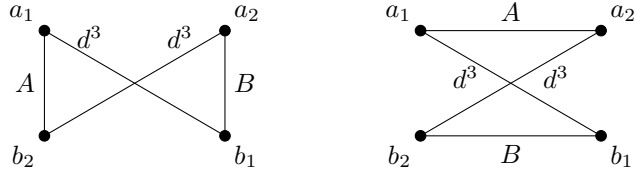

By renaming \(A\) and \(B\), if necessary, we can assume \(a_1 \in A\) and \(b_1 \in B\). Then, using \eqref{l4.5:e2}, \eqref{l4.5:e3}, \eqref{l4.5:e5} and \(A \cap B = \varnothing\), we can prove that the following only two cases are possible:
\begin{align}
\label{l4.5:e6}
A & = \{a_1, a_2\} \quad \text{and} \quad B = \{b_1, b_2\}\\
\intertext{or}
\label{l4.5:e7}
A & = \{a_1, b_2\} \quad \text{and} \quad B = \{b_1, a_2\}
\end{align}
(see Figure~\ref{fig7}). Moreover, it is easy to prove that we have either
\begin{align}
\label{l4.5:e8}
\diam A = d^1 \quad \text{and} \quad \diam B = d^2
\intertext{or}
\label{l4.5:e9}
\diam A = d^2 \quad \text{and} \quad \diam B = d^1.
\end{align}
Indeed, if neither \eqref{l4.5:e8} nor \eqref{l4.5:e9} are fulfilled, then the graph \(G_Y(A, B)\) has exactly one edge, contrary to \eqref{l4.5:e3}. Thus, we only have the following four possible cases (see Figure~\ref{fig8}).

\begin{figure}[htb]
\centering
\begin{tikzpicture}[scale=1,
arrow/.style = {-{Stealth[length=5pt]}, shorten >=2pt}]
\def\xx{1.2cm}
\def\yy{0.7cm}
\def\dr{2pt}

\coordinate [label=above left:{$a_1$}] (a1) at (-\xx, \yy);
\coordinate [label=above right:{$a_2$}] (a2) at (\xx, 1.2*\yy);
\coordinate [label=below right:{$b_1$}] (b1) at (\xx, -1.2*\yy);
\coordinate [label=below left:{$b_2$}] (b2) at (-\xx, -\yy);
\draw [fill, black] (a1) circle (\dr);
\draw [fill, black] (a2) circle (\dr);
\draw [fill, black] (b1) circle (\dr);
\draw [fill, black] (b2) circle (\dr);
\draw (a1) -- node [near start, above] {\(d^3\)} (b1);
\draw (b2) -- node [near start, below] {\(d^3\)} (a2);
\draw (a1) -- node [left] {\(d^1\)} (b2);
\draw (a2) -- node [right] {\(d^2\)} (b1);
\draw (-2*\xx, \yy) node[above left] {\(G^1\)};

\begin{scope}[xshift=7cm]
\coordinate [label=above left:{$a_1$}] (a1) at (-\xx, 1.2*\yy);
\coordinate [label=above right:{$a_2$}] (a2) at (\xx, \yy);
\coordinate [label=below right:{$b_1$}] (b1) at (\xx, -\yy);
\coordinate [label=below left:{$b_2$}] (b2) at (-\xx, -1.2*\yy);
\draw [fill, black] (a1) circle (\dr);
\draw [fill, black] (a2) circle (\dr);
\draw [fill, black] (b1) circle (\dr);
\draw [fill, black] (b2) circle (\dr);
\draw (a1) -- node [near start, above] {\(d^3\)} (b1);
\draw (b2) -- node [near start, below] {\(d^3\)} (a2);
\draw (a1) -- node [left] {\(d^2\)} (b2);
\draw (a2) -- node [right] {\(d^1\)} (b1);
\draw (-2*\xx, \yy) node[above left] {\(G^2\)};
\end{scope}

\begin{scope}[yshift=-3.5cm]
\coordinate [label=above left:{$a_1$}] (a1) at (-\xx, \yy);
\coordinate [label=above right:{$a_2$}] (a2) at (\xx, \yy);
\coordinate [label=below right:{$b_1$}] (b1) at (1.2*\xx, -\yy);
\coordinate [label=below left:{$b_2$}] (b2) at (-1.2*\xx, -\yy);
\draw [fill, black] (a1) circle (\dr);
\draw [fill, black] (a2) circle (\dr);
\draw [fill, black] (b1) circle (\dr);
\draw [fill, black] (b2) circle (\dr);
\draw (a1) -- node [near end, above] {\(d^3\)} (b1);
\draw (a2) -- node [near end, above] {\(d^3\)} (b2);
\draw (a1) -- node [above] {\(d^1\)} (a2);
\draw (b1) -- node [below] {\(d^2\)} (b2);
\draw (-2*\xx, \yy) node[above left] {\(G^3\)};
\end{scope}

\begin{scope}[xshift=7cm, yshift=-3.5cm]
\coordinate [label=above left:{$a_1$}] (a1) at (-1.2*\xx, \yy);
\coordinate [label=above right:{$a_2$}] (a2) at (1.2*\xx, \yy);
\coordinate [label=below right:{$b_1$}] (b1) at (\xx, -\yy);
\coordinate [label=below left:{$b_2$}] (b2) at (-\xx, -\yy);
\draw [fill, black] (a1) circle (\dr);
\draw [fill, black] (a2) circle (\dr);
\draw [fill, black] (b1) circle (\dr);
\draw [fill, black] (b2) circle (\dr);
\draw (a1) -- node [near start, below] {\(d^3\)} (b1);
\draw (a2) -- node [near start, below] {\(d^3\)} (b2);
\draw (a1) -- node [above] {\(d^2\)} (a2);
\draw (b1) -- node [below] {\(d^1\)} (b2);
\draw (-2*\xx, \yy) node[above left] {\(G^4\)};
\end{scope}
\end{tikzpicture}
\caption{Step~2. Choice of weights on \(A\) and \(B\).} \label{fig8}
\end{figure}
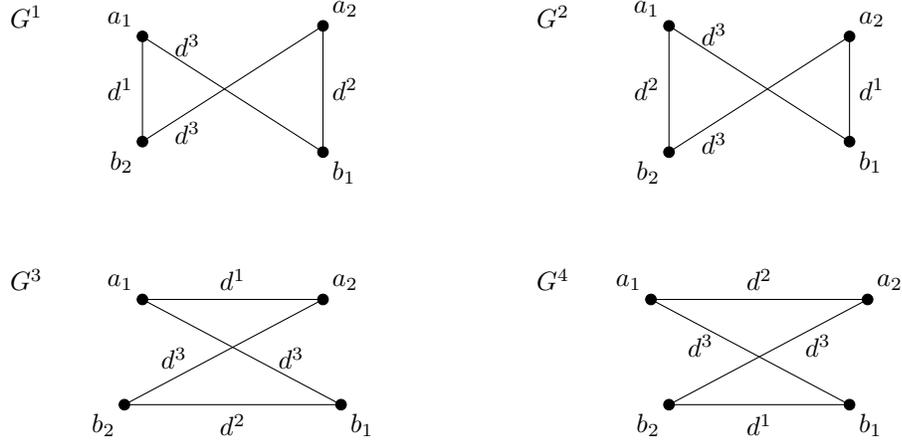

We can now completely describe all admissible extensions of the weights \(E(G^l) \to D(Y)\), \(l = 1, \ldots, 4\), to the weights \(E(K_{|Y|}) \to D(Y) \) and, consequently, to the semimetrics \(Y \times Y \to D_Y \to [0, \infty)\). To do this, note that in each of the graphs \(G^1\), \(\ldots\), \(G^4\) there are exactly two pairs of nonadjacent points of \(Y\) with different pairwise distances equal \(d^4\) or \(d^5\). For example, starting from \(G^1\), we obtain the admissible semimetrics \(d^{1,1}\) and \(d^{1,2}\). Similarly, using \(G^2\), we obtain the semimetrics \(d^{2,1}\) and \(d^{2,2}\), and so on (see Figure~\ref{fig10}).

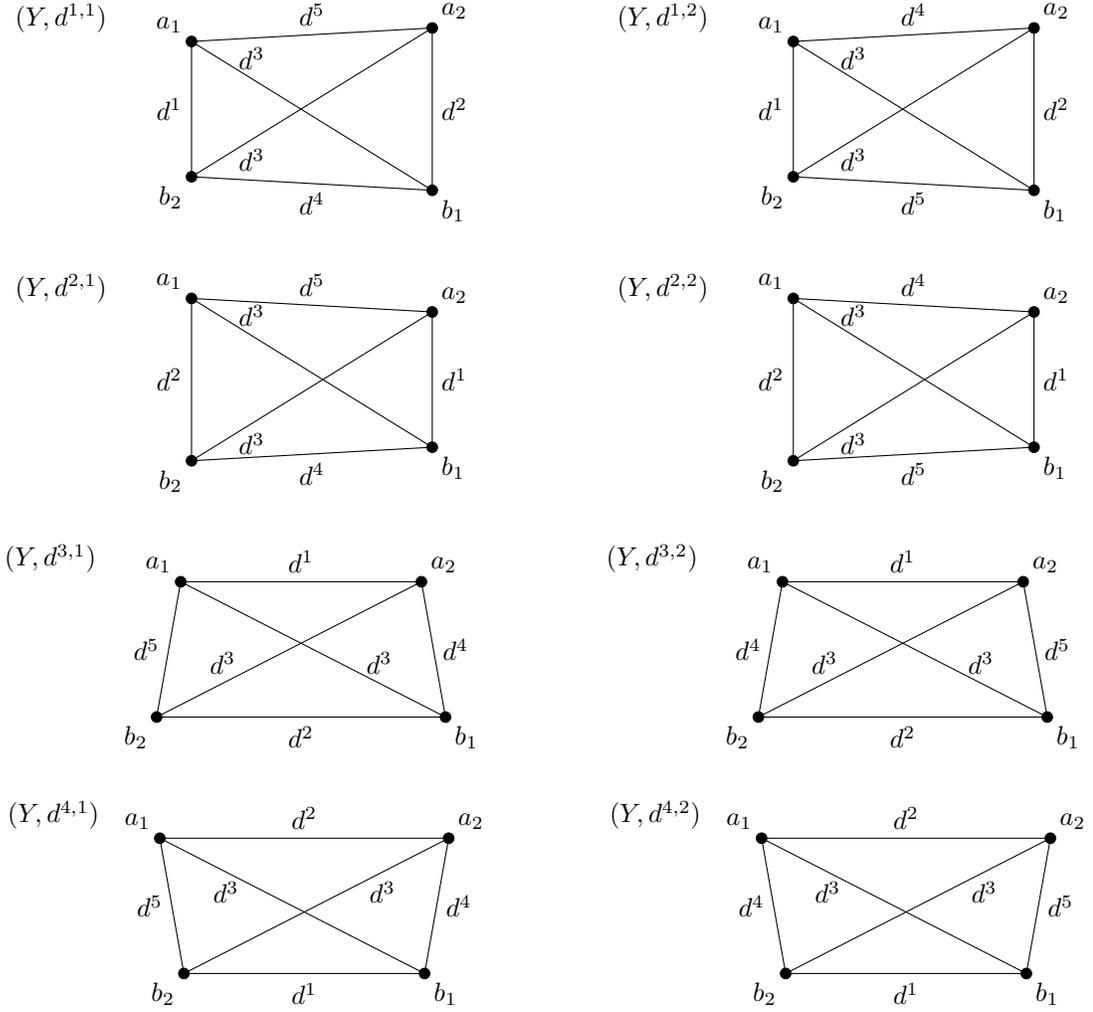
\begin{figure}[ht]
\centering
\begin{tikzpicture}[scale=1,
arrow/.style = {-{Stealth[length=5pt]}, shorten >=2pt}]
\def\xx{1.6cm}
\def\yy{0.9cm}
\def\dr{2pt}
\def\xshift{8cm}
\def\yshift{-4cm}

\coordinate [label=above left:{$a_1$}] (a1) at (-\xx, \yy);
\coordinate [label=above right:{$a_2$}] (a2) at (\xx, 1.2*\yy);
\coordinate [label=below right:{$b_1$}] (b1) at (\xx, -1.2*\yy);
\coordinate [label=below left:{$b_2$}] (b2) at (-\xx, -\yy);
\draw [fill, black] (a1) circle (\dr);
\draw [fill, black] (a2) circle (\dr);
\draw [fill, black] (b1) circle (\dr);
\draw [fill, black] (b2) circle (\dr);
\draw (a1) -- node [near start, above] {\(d^3\)} (b1);
\draw (b2) -- node [near start, below] {\(d^3\)} (a2);
\draw (a1) -- node [left] {\(d^1\)} (b2);
\draw (a2) -- node [right] {\(d^2\)} (b1);
\draw (a1) -- node [above] {\(d^5\)} (a2);
\draw (b1) -- node [below] {\(d^4\)} (b2);
\draw (-\xx-1cm, \yy) node[above left] {\((Y, d^{1,1})\)};

\begin{scope}[xshift=\xshift]
\coordinate [label=above left:{$a_1$}] (a1) at (-\xx, \yy);
\coordinate [label=above right:{$a_2$}] (a2) at (\xx, 1.2*\yy);
\coordinate [label=below right:{$b_1$}] (b1) at (\xx, -1.2*\yy);
\coordinate [label=below left:{$b_2$}] (b2) at (-\xx, -\yy);
\draw [fill, black] (a1) circle (\dr);
\draw [fill, black] (a2) circle (\dr);
\draw [fill, black] (b1) circle (\dr);
\draw [fill, black] (b2) circle (\dr);
\draw (a1) -- node [near start, above] {\(d^3\)} (b1);
\draw (b2) -- node [near start, below] {\(d^3\)} (a2);
\draw (a1) -- node [left] {\(d^1\)} (b2);
\draw (a2) -- node [right] {\(d^2\)} (b1);
\draw (a1) -- node [above] {\(d^4\)} (a2);
\draw (b1) -- node [below] {\(d^5\)} (b2);
\draw (-\xx-1cm, \yy) node[above left] {\((Y, d^{1,2})\)};
\end{scope}
\end{tikzpicture}
\bigskip

\begin{tikzpicture}[scale=1,
arrow/.style = {-{Stealth[length=5pt]}, shorten >=2pt}]
\def\xx{1.6cm}
\def\yy{0.9cm}
\def\dr{2pt}
\def\xshift{8cm}
\def\yshift{-4cm}

\coordinate [label=above left:{$a_1$}] (a1) at (-\xx, 1.2*\yy);
\coordinate [label=above right:{$a_2$}] (a2) at (\xx, \yy);
\coordinate [label=below right:{$b_1$}] (b1) at (\xx, -\yy);
\coordinate [label=below left:{$b_2$}] (b2) at (-\xx, -1.2*\yy);
\draw [fill, black] (a1) circle (\dr);
\draw [fill, black] (a2) circle (\dr);
\draw [fill, black] (b1) circle (\dr);
\draw [fill, black] (b2) circle (\dr);
\draw (a1) -- node [near start, above] {\(d^3\)} (b1);
\draw (b2) -- node [near start, below] {\(d^3\)} (a2);
\draw (a1) -- node [left] {\(d^2\)} (b2);
\draw (a2) -- node [right] {\(d^1\)} (b1);
\draw (a1) -- node [above] {\(d^5\)} (a2);
\draw (b1) -- node [below] {\(d^4\)} (b2);
\draw (-\xx-1cm, \yy) node[above left] {\((Y, d^{2,1})\)};

\begin{scope}[xshift=\xshift]
\coordinate [label=above left:{$a_1$}] (a1) at (-\xx, 1.2*\yy);
\coordinate [label=above right:{$a_2$}] (a2) at (\xx, \yy);
\coordinate [label=below right:{$b_1$}] (b1) at (\xx, -\yy);
\coordinate [label=below left:{$b_2$}] (b2) at (-\xx, -1.2*\yy);
\draw [fill, black] (a1) circle (\dr);
\draw [fill, black] (a2) circle (\dr);
\draw [fill, black] (b1) circle (\dr);
\draw [fill, black] (b2) circle (\dr);
\draw (a1) -- node [near start, above] {\(d^3\)} (b1);
\draw (b2) -- node [near start, below] {\(d^3\)} (a2);
\draw (a1) -- node [left] {\(d^2\)} (b2);
\draw (a2) -- node [right] {\(d^1\)} (b1);
\draw (a1) -- node [above] {\(d^4\)} (a2);
\draw (b1) -- node [below] {\(d^5\)} (b2);
\draw (-\xx-1cm, \yy) node[above left] {\((Y, d^{2,2})\)};
\end{scope}
\end{tikzpicture}
\bigskip

\begin{tikzpicture}[scale=1,
arrow/.style = {-{Stealth[length=5pt]}, shorten >=2pt}]
\def\xx{1.6cm}
\def\yy{0.9cm}
\def\dr{2pt}
\def\xshift{8cm}
\def\yshift{-4cm}

\coordinate [label=above left:{$a_1$}] (a1) at (-\xx, \yy);
\coordinate [label=above right:{$a_2$}] (a2) at (\xx, \yy);
\coordinate [label=below right:{$b_1$}] (b1) at (1.2*\xx, -\yy);
\coordinate [label=below left:{$b_2$}] (b2) at (-1.2*\xx, -\yy);
\draw [fill, black] (a1) circle (\dr);
\draw [fill, black] (a2) circle (\dr);
\draw [fill, black] (b1) circle (\dr);
\draw [fill, black] (b2) circle (\dr);
\draw (a1) -- node [near end, above] {\(d^3\)} (b1);
\draw (a2) -- node [near end, above] {\(d^3\)} (b2);
\draw (a1) -- node [above] {\(d^1\)} (a2);
\draw (b1) -- node [below] {\(d^2\)} (b2);
\draw (a1) -- node [left] {\(d^5\)} (b2);
\draw (a2) -- node [right] {\(d^4\)} (b1);
\draw (-\xx-1cm, \yy) node[above left] {\((Y, d^{3,1})\)};

\begin{scope}[xshift=\xshift]
\coordinate [label=above left:{$a_1$}] (a1) at (-\xx, \yy);
\coordinate [label=above right:{$a_2$}] (a2) at (\xx, \yy);
\coordinate [label=below right:{$b_1$}] (b1) at (1.2*\xx, -\yy);
\coordinate [label=below left:{$b_2$}] (b2) at (-1.2*\xx, -\yy);
\draw [fill, black] (a1) circle (\dr);
\draw [fill, black] (a2) circle (\dr);
\draw [fill, black] (b1) circle (\dr);
\draw [fill, black] (b2) circle (\dr);
\draw (a1) -- node [near end, above] {\(d^3\)} (b1);
\draw (a2) -- node [near end, above] {\(d^3\)} (b2);
\draw (a1) -- node [above] {\(d^1\)} (a2);
\draw (b1) -- node [below] {\(d^2\)} (b2);
\draw (a1) -- node [left] {\(d^4\)} (b2);
\draw (a2) -- node [right] {\(d^5\)} (b1);
\draw (-\xx-1cm, \yy) node[above left] {\((Y, d^{3,2})\)};
\end{scope}
\end{tikzpicture}
\bigskip

\begin{tikzpicture}[scale=1,
arrow/.style = {-{Stealth[length=5pt]}, shorten >=2pt}]
\def\xx{1.6cm}
\def\yy{0.9cm}
\def\dr{2pt}
\def\xshift{8cm}
\def\yshift{-4cm}

\coordinate [label=above left:{$a_1$}] (a1) at (-1.2*\xx, \yy);
\coordinate [label=above right:{$a_2$}] (a2) at (1.2*\xx, \yy);
\coordinate [label=below right:{$b_1$}] (b1) at (\xx, -\yy);
\coordinate [label=below left:{$b_2$}] (b2) at (-\xx, -\yy);
\draw [fill, black] (a1) circle (\dr);
\draw [fill, black] (a2) circle (\dr);
\draw [fill, black] (b1) circle (\dr);
\draw [fill, black] (b2) circle (\dr);
\draw (a1) -- node [near start, below] {\(d^3\)} (b1);
\draw (a2) -- node [near start, below] {\(d^3\)} (b2);
\draw (a1) -- node [above] {\(d^2\)} (a2);
\draw (b1) -- node [below] {\(d^1\)} (b2);
\draw (a1) -- node [left] {\(d^5\)} (b2);
\draw (a2) -- node [right] {\(d^4\)} (b1);
\draw (-\xx-1cm, \yy) node[above left] {\((Y, d^{4,1})\)};

\begin{scope}[xshift=\xshift]
\coordinate [label=above left:{$a_1$}] (a1) at (-1.2*\xx, \yy);
\coordinate [label=above right:{$a_2$}] (a2) at (1.2*\xx, \yy);
\coordinate [label=below right:{$b_1$}] (b1) at (\xx, -\yy);
\coordinate [label=below left:{$b_2$}] (b2) at (-\xx, -\yy);
\draw [fill, black] (a1) circle (\dr);
\draw [fill, black] (a2) circle (\dr);
\draw [fill, black] (b1) circle (\dr);
\draw [fill, black] (b2) circle (\dr);
\draw (a1) -- node [near start, below] {\(d^3\)} (b1);
\draw (a2) -- node [near start, below] {\(d^3\)} (b2);
\draw (a1) -- node [above] {\(d^2\)} (a2);
\draw (b1) -- node [below] {\(d^1\)} (b2);
\draw (a1) -- node [left] {\(d^4\)} (b2);
\draw (a2) -- node [right] {\(d^5\)} (b1);
\draw (-\xx-1cm, \yy) node[above left] {\((Y, d^{4,2})\)};
\end{scope}
\end{tikzpicture}
\caption{Step~3. All possible weights on the edges of the graph \(K_{|Y|}\).} \label{fig10}
\end{figure}

Thus, there are \(i \in \{1, \ldots, 4\}\) and \(j \in \{1, 2\}\) such that \(d = d^{i, j}\).

We claim that all semimetric spaces \((Y, d^{i, j})\), \(i \in \{1, \ldots, 4\}\), \(j \in \{1, 2\}\), are pairwise isometric. To construct the desirable isometries \(\Phi \colon (Y, d^{i_1, j_1}) \to (Y, d^{i_2, j_2})\), we note that, for every \(i \in \{1, \ldots, 4\}\) and every \(j \in \{1, 2\}\), there is exactly one \(a_1^{i, j} \in Y\) such that
\begin{align}
\label{l4.5:e10}
d^{i, j}(a_1^{i, j}, x) &= d^1,\\
\intertext{and}
\label{l4.5:e11}
d^{i, j}(a_1^{i, j}, y) &= d^5
\end{align}
hold for some \(x\), \(y \in Y\). In particular, we have
\[
a_1^{1,1} = a_1^{3,1} = a_1, \quad a_1^{1,2} = a_1^{4,1} = b_2, \quad a_1^{2,1} = a_1^{3,2} = a_2, \quad a_1^{2,2} = a_1^{4,2} = b_1.
\]
Moreover, equations \eqref{l4.5:e10} and \eqref{l4.5:e11} have the unique solutions which we denote by \(a_2^{i, j}\) and \(a_3^{i, j}\), respectively. Since \(d^1 \neq d^5\), we have \(a_2^{i, j} \neq a_3^{i, j}\). Let \(a_4^{i, j}\) be the unique point of the set
\[
Y \setminus \{a_1^{i, j}, a_2^{i, j}, a_3^{i, j}\}.
\]
Then simple direct calculations show that, for given \(i_1\), \(i_2 \in \{1, \ldots, 4\}\) and \(j_1\), \(j_2 \in \{1, 2\}\), the mapping \(\Phi \colon Y \to Y\),
\[
\Phi(a_k^{i_1, j_1}) = a_k^{i_2, j_2}, \quad k \in \{1, \ldots, 4\},
\]
is an isometry of the semimetric spaces \((Y, d^{i_1, j_1})\) and \((Y, d^{i_2, j_2})\).

To complete the proof it suffices to note that the mapping \(F \colon X^* \to Y\) satisfying the equalities
\[
F(a_1^*) = a_1, \quad F(a_2^*) = a_2, \quad F(b_1^*) = b_1 \quad \text{and} \quad F(b_2^*) = b_2
\]
is a weak similarity of \((X^*, \rho^*)\) and \((Y, d^{1,1})\).
\end{proof}

The following theorem is the main result of the section.

\begin{theorem}\label{t4.5}
Let \((X, d)\) be a semimetric space. Then the following statements are equivalent:
\begin{enumerate}
\item \label{t4.5:s1} \((X, d) \in \mathbf{UBPP}\).
\item \label{t4.5:s2} \((X, d) \in \mathbf{WR}\), and, for every four-point \(Y \subseteq X\), the digraph \(Di_Y\) is isomorphic to the one of the digraphs \(Di^1\), \(Di^2\), \(Di^3\), \(Di^4\), and \((X, d)\) does not contain any four-point subspace which is weakly similar to the semimetric space \((X^*, \rho^*)\).
\end{enumerate}
\end{theorem}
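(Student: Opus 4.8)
The plan is to obtain Theorem~\ref{t4.5} by assembling the four-point classification Lemmas~\ref{t2.7}, \ref{l3.7}, \ref{l4.5}, once two elementary hereditary properties have been recorded. First I would note that \emph{every finite nonempty subset of a semimetric space is proximinal}, because the infimum defining the distance to a finite set is attained. From this, and from the fact that $\dist(A,B)$ depends only on $A$, $B$ and the values of $d$ on $A\times B$, it follows that both $\mathbf{WR}$ and $\mathbf{UBPP}$ are closed under passing to subspaces: for $\mathbf{WR}$ this is immediate from Definition~\ref{d3.1}, and for $\mathbf{UBPP}$ it is because two distinct best proximity pairs for disjoint $A$, $B$ inside a subspace $Y$ remain two distinct best proximity pairs for the disjoint proximinal sets $A$, $B$ in the whole space.

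For the implication $\ref{t4.5:s1}\Rightarrow\ref{t4.5:s2}$, assume $(X,d)\in\mathbf{UBPP}$. Then $(X,d)\in\mathbf{WR}$ by Corollary~\ref{c3.3}. For each four-point $Y\subseteq X$ the subspace $(Y,d|_{Y\times Y})$ lies in $\mathbf{UBPP}$ by heredity, so Lemma~\ref{t2.7} gives that $Di_Y$ is isomorphic to one of $Di^1$, $Di^2$, $Di^3$, $Di^4$. If, moreover, some four-point $Y\subseteq X$ were weakly similar to $(X^*,\rho^*)$, then by Lemma~\ref{l4.4} we would get $(Y,d|_{Y\times Y})\notin\mathbf{UBPP}$ (since $(X^*,\rho^*)\notin\mathbf{UBPP}$ by Example~\ref{ex3.4}), a contradiction. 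Hence \ref{t4.5:s2} holds.

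For $\ref{t4.5:s2}\Rightarrow\ref{t4.5:s1}$, I would argue by contradiction: assume \ref{t4.5:s2} but $(X,d)\notin\mathbf{UBPP}$, so there are disjoint proximinal $A$, $B\subseteq X$ and two distinct best proximity pairs $(a_0,b_0),(a_1,b_1)\in A\times B$ with $d(a_0,b_0)=d(a_1,b_1)=\dist(A,B)$. Since $(X,d)\in\mathbf{WR}$ contains no isosceles triangle, $a_0\neq a_1$ and $b_0\neq b_1$, so $Y:=\{a_0,a_1,b_0,b_1\}$ has exactly four points. Putting $A'=\{a_0,a_1\}$ and $B'=\{b_0,b_1\}$, the inequalities $d(a_i,b_j)\geqslant\dist(A,B)$ together with $d(a_0,b_0)=\dist(A,B)$ give $\dist(A',B')=\dist(A,B)$, so $(a_0,b_0)$ and $(a_1,b_1)$ are two distinct best proximity pairs for the finite (hence proximinal) disjoint sets $A'$, $B'$ in $Y$; thus $(Y,d|_{Y\times Y})\notin\mathbf{UBPP}$. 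But $(Y,d|_{Y\times Y})\in\mathbf{WR}$ by heredity and, by \ref{t4.5:s2}, $Di_Y$ is isomorphic to one of $Di^1,\dots,Di^4$. Lemma~\ref{l3.7} rules out $Di^1$, $Di^2$, $Di^3$, so $Di_Y$ is isomorphic to $Di^4$, and then Lemma~\ref{l4.5} forces $(Y,d|_{Y\times Y})$ to be weakly similar to $(X^*,\rho^*)$, contradicting the last clause of \ref{t4.5:s2}. This completes the plan.

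As for the main obstacle: most of the genuine difficulty has already been absorbed into Lemma~\ref{l4.5}, the technical heart of the paper, so the theorem itself requires no new heavy machinery. The only points that need care are the heredity of $\mathbf{UBPP}$ --- which relies on the two observations that finite sets are always proximinal and that $\dist(A,B)$ is insensitive to the ambient space --- and the reduction step that extracts a genuine four-point subspace from an arbitrary pair of distinct best proximity pairs, where weak rigidity is exactly what guarantees the four extracted points are pairwise distinct.
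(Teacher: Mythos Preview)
Your proof is correct and follows essentially the same route as the paper's: both directions invoke Corollary~\ref{c3.3}, Lemmas~\ref{t2.7}, \ref{l4.4}, \ref{l3.7}, \ref{l4.5} together with the heredity of \(\mathbf{UBPP}\) and the reduction to a four-point subspace via the finite sets \(A'=\{a_0,a_1\}\), \(B'=\{b_0,b_1\}\), using weak rigidity to ensure these are four distinct points. The only cosmetic difference is that the paper first isolates the equivalence ``\((X,d)\in\mathbf{UBPP}\) iff every four-point subspace is in \(\mathbf{UBPP}\)'' and then applies the lemmas, whereas you fold this into a single contradiction argument.
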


\begin{proof}
\(\ref{t4.5:s1} \Rightarrow \ref{t4.5:s2}\). Let \((X, d)\) belong to \(\mathbf{UBPP}\). Then we have \((X, d) \in \mathbf{WR}\) by Corollary~\ref{c3.3}, and, for every four-point \(Y \subseteq X\), the digraph \(Di_Y\) is isomorphic to the one of the digraphs \(Di^1\), \(\ldots\), \(Di^4\) by Lemma~\ref{t2.7}.

Suppose that there is a four-point \(Y \subseteq X\) such that \((Y, d|_{Y \times Y})\) is weakly similar to \((X^*, \rho^*)\). Since \((X^*, \rho^*)\) does not belong to \(\mathbf{UBPP}\) (see Example~\ref{ex3.4}), we obtain
\begin{equation}\label{t4.5:e1}
(Y, d|_{Y \times Y}) \notin \mathbf{UBPP}
\end{equation}
by Lemma~\ref{l4.4}. It follows directly from the definition of the class \(\mathbf{UBPP}\) that every subspace of any \(\mathbf{UBPP}\) space also belongs to \(\mathbf{UBPP}\). Hence, \eqref{t4.5:e1} contradicts to \((X, d) \in \mathbf{UBPP}\). Statement \ref{t4.5:s2} follows.

\(\ref{t4.5:s2} \Rightarrow \ref{t4.5:s1}\). Let \ref{t4.5:s2} hold. We must show that
\begin{equation}\label{t4.5:e2}
(X, d) \in \mathbf{UBPP}.
\end{equation}
The last relationship follows from Definition~\ref{d3.1} and Proposition~\ref{p2.4} if \(|X| \leqslant 3\) holds.

Let us consider the case when \(|X| > 3 \). In this case \eqref{t4.5:e2} holds if and only if, for every four-point \(Y \subseteq X\), we have
\begin{equation}\label{t4.5:e3}
(Y, d|_{Y \times Y}) \in \mathbf{UBPP}.
\end{equation}
Indeed, suppose we have \eqref{t4.5:e3} for every four-point \(Y \subseteq X\), but \eqref{t4.5:e2} does not hold. Then there are disjoint proximinal subsets \(A\) and \(B\) of \(X\) such that the proximinal graph \(G_X(A, B)\) contains at least two distinct edges \(\{a_1, b_1\}\) and \(\{a_2, b_2\}\), \(a_i \in A\), \(b_i \in B\), \(i = 1\), \(2\). Write
\begin{equation}\label{t4.5:e4}
A^0 = \{a_1\} \cup \{a_2\} \quad \text{and} \quad B^0 = \{b_1\} \cup \{b_2\}.
\end{equation}
Since every nonempty finite subset of \((X, d)\) is proximinal in \((X, d)\), we see that \(A^0\) and \(B^0\) are disjoint proximinal subsets of \((X, d)\). Moreover, using \eqref{e1.2}, \eqref{t4.5:e4} and the inclusions \(A^0 \subseteq A\), \(B^0 \subseteq B\), we obtain
\[
\dist(A^0, B^0) \geqslant \dist (A, B) = d(a_1, b_1) = d(a_2, b_2) \geqslant \dist(A^0, B^0).
\]
Hence, \(\{a_1, b_1\}\) and \(\{a_2, b_2\}\) are also the edges of the proximinal graph \(G_{Y, \rho}(A^0, B^0)\) for \(Y = A^0 \cup B^0\) and \(\rho = d|_{Y \times Y}\). From \((X, d) \in \mathbf{WR}\) it follows that \((Y, \rho) \in \mathbf{WR}\). The last two statements and \((Y, \rho) \in \mathbf{WR}\) imply that \(Y\) contains exactly four points and \((Y, d|_{Y \times Y})\) does not belong to \(\mathbf{UBPP}\), contrary to \eqref{t4.5:e3}.

To complete the proof it suffices to note that \eqref{t4.5:e3} holds for every four-point \(Y \subseteq X\) by statement~\ref{t4.5:s2} and Lemmas~\ref{l3.7} and \ref{l4.5}.
\end{proof}

\begin{corollary}
Let \((X, d)\) be a semimetric space. Then \((X, d) \in \mathbf{UBPP}\) if and only if we have \((Y, d|_{Y \times Y}) \in \mathbf{UBPP}\) for every \(Y \subseteq X\) with \(|Y| \leqslant 4\).
\end{corollary}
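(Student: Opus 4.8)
The plan is to read off both implications directly from Theorem~\ref{t4.5}, exploiting the fact that membership in \(\mathbf{UBPP}\) is really tested on subspaces of at most four points. No new construction is needed.

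For the implication ``\(\Rightarrow\)'' I would show that if \((X,d) \in \mathbf{UBPP}\) then \((Y, d|_{Y\times Y}) \in \mathbf{UBPP}\) for every finite \(Y \subseteq X\), in particular for \(|Y| \leqslant 4\). The key point is that when \(Y\) is finite, every nonempty \(A, B \subseteq Y\) is proximinal both in \((Y, d|_{Y\times Y})\) and in \((X,d)\) (every nonempty finite subset of a semimetric space is proximinal, as already used in the proof of Theorem~\ref{t4.5}), that \(\dist(A,B)\) computed inside \(Y\) coincides with \(\dist(A,B)\) computed inside \(X\), and that the defining condition \(d(a_0, b_0) = \dist(A,B)\) for a best proximity pair makes no reference to the ambient space. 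Hence two distinct best proximity pairs for disjoint \(A, B\) inside \(Y\) would also be two distinct best proximity pairs inside \(X\), contradicting \((X,d) \in \mathbf{UBPP}\).

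For the implication ``\(\Leftarrow\)'' I would assume \((Y, d|_{Y\times Y}) \in \mathbf{UBPP}\) for all \(Y \subseteq X\) with \(|Y| \leqslant 4\) and verify the three conditions in statement~\ref{t4.5:s2} of Theorem~\ref{t4.5}. First, \((X,d) \in \mathbf{WR}\): every three-point subspace lies in \(\mathbf{UBPP}\) by hypothesis, hence is strongly rigid by Proposition~\ref{p2.4}, so \((X,d)\) is weakly rigid by Definition~\ref{d3.1}. Second, for any four-point \(Y \subseteq X\) we have \((Y, d|_{Y\times Y}) \in \mathbf{UBPP}\), so Lemma~\ref{t2.7} shows \(Di_Y\) is isomorphic to one of \(Di^1\), \(Di^2\), \(Di^3\), \(Di^4\). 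Third, no four-point \(Y \subseteq X\) can be weakly similar to \((X^*, \rho^*)\): otherwise Lemma~\ref{l4.4} together with \((X^*, \rho^*) \notin \mathbf{UBPP}\) (Example~\ref{ex3.4}) would force \((Y, d|_{Y\times Y}) \notin \mathbf{UBPP}\), contrary to the hypothesis. With all three conditions in hand, Theorem~\ref{t4.5} gives \((X,d) \in \mathbf{UBPP}\).

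I do not expect a serious obstacle; the corollary is essentially a repackaging of Theorem~\ref{t4.5}. The only point needing a little care is the ``\(\Rightarrow\)'' direction, where one must check that passing to a subspace does not create new disjoint proximinal pairs — and this is exactly where the finiteness of \(Y\) (equivalently, \(|Y| \leqslant 4\)) is used, since for infinite subspaces proximinality is not automatically inherited.
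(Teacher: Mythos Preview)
Your proposal is correct and follows exactly the route the paper intends: the corollary is stated without proof, as an immediate consequence of Theorem~\ref{t4.5}, and your verification of the three clauses of~\ref{t4.5:s2} via Proposition~\ref{p2.4}, Lemma~\ref{t2.7}, and Lemma~\ref{l4.4} is precisely what is needed.

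One small remark on your final paragraph: the ``\(\Rightarrow\)'' direction does not actually require \(Y\) to be finite. As noted inside the proof of Theorem~\ref{t4.5}, \emph{every} subspace of a \(\mathbf{UBPP}\)-space is again in \(\mathbf{UBPP}\): if \(A, B \subseteq Y\) admit two distinct best proximity pairs \((a_1,b_1)\), \((a_2,b_2)\), then the finite sets \(A^0=\{a_1,a_2\}\) and \(B^0=\{b_1,b_2\}\) are proximinal in \((X,d)\) and inherit the same two edges, contradicting \((X,d)\in\mathbf{UBPP}\). So finiteness of \(Y\) is convenient but not essential there; your argument is still valid, just slightly less sharp than possible.
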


The following problems lead to the future development of the main results of the paper, Theorems~\ref{t4.2} and \ref{t4.5}.

\begin{problem}
Describe the structure of semimetric spaces \((X, d)\) for which every \(x \in X\) has at most \(k\) best approximations in every proximinal \(A \subseteq X\) with a given integer \(k \geqslant 1\).
\end{problem}

\begin{problem}
Describe the structure of semimetric spaces \((X, d)\) for which every proximinal graph \(G_{X, d}(A, B)\) has at most \(k\) edges with given integer \(k \geqslant 1\).
\end{problem}

\section*{Funding}

Oleksiy Dovgoshey was partially supported by Volkswagen Stiftung Project ``From
Modeling and Analysis to Approximation''.

\medskip

\textbf{Declaration of competing interest}
\medskip

No conflicts of interest to be disclosed.


\begin{thebibliography}{10}
\expandafter\ifx\csname url\endcsname\relax
  \def\url#1{\texttt{#1}}\fi
\expandafter\ifx\csname urlprefix\endcsname\relax\def\urlprefix{URL }\fi
\expandafter\ifx\csname href\endcsname\relax
  \def\href#1#2{#2} \def\path#1{#1}\fi

\bibitem{Blumenthal1953}
L.~M. Blumenthal, {Theory and Applications of Distance Geometry}, Clarendon
  Press, Oxford, 1953.

\bibitem{Sin1974}
I.~Singer, {The Theory of Best Approximation and Functional Analysis}, Vol.~13
  of CBMS-NSF Regional Conference Series in Applied Mathematics, Society for
  Industrial and Applied Mathematics, Philadelphia, 1974.

\bibitem{Phe1957PotAMS}
R.~R. Phelps, Convex sets and nearest points, Proc. Amer. Math. Soc. 8~(4)
  (1957) 790--797.
\newblock \href {http://dx.doi.org/https://doi.org/10.2307/2033300}
  {\path{doi:https://doi.org/10.2307/2033300}}.

\bibitem{ref12}
W.~A. Kirk, S.~Reich, P.~Veeramani, Proximinal retracts and best proximity pair
  theorems, Numer. Funct. Anal. Optimization 24~(7--8) (2003) 851--862.

\bibitem{ref9}
T.~D. Narang, Best approximation and best simultaneous approximation in
  ultrametric spaces, Demonstr. Math. 29~(2) (1996) 445--450.

\bibitem{ref10}
T.~D. Narang, S.~K. Garg, Best approximation in ultrametric spaces, Indian J.
  Pure Appl. Math. 13 (1982) 727--731.

\bibitem{ref11}
I.~Singer, {Best Approximation in Normed Linear Spaces by Elements of Linear
  Subspaces}, Vol. 171 of Die Grundlehren der mathematischen Wissenschaften,
  Springer-Verlag, 1970.

\bibitem{Sch1985}
W.~H. Schikhof, {Ultrametric Calculus. An Introduction to p-Adic Analysis},
  Cambridge University Press, 1985.

\bibitem{CDL2021pNUAA}
K.~Chaira, O.~Dovgoshey, S.~Lazaiz, Best proximity pairs in ultrametric spaces,
  p-Adic Numbers Ultrametr. Anal. Appl. 13~(4) (2021) 255--265.

\bibitem{SLA2020IJoMaMS}
B.~Saadaoui, S.~Lazaiz, M.~Aamri,
  \href{https://EconPapers.repec.org/RePEc:hin:jijmms:7481060}{On best
  proximity point theorems in locally convex spaces endowed with a graph},
  International Journal of Mathematics and Mathematical Sciences 2020 (2020)
  1--7.
\newblock \href {http://dx.doi.org/10.1155/2020/7481060}
  {\path{doi:10.1155/2020/7481060}}.
\newline\urlprefix\url{https://EconPapers.repec.org/RePEc:hin:jijmms:7481060}

\bibitem{SV2017AGT}
A.~Sultana, V.~Vetrivel, Best proximity points of contractive mappings on a
  metric space with a graph and applications, Applied General Topology 18~(1)
  (2017) 13--21.

\bibitem{CDL2021a}
K.~Chaira, O.~Dovgoshey, S.~Lazaiz, Bipartite graphs and best proximity pairs,
  arXiv:2111.07289v2 (2021) 1--17.

\bibitem{Wil1996}
R.~J. Wilson, Introduction to {G}raph {T}heory, 4th Edition, Longman, Harlow,
  1996.

\bibitem{CL1996}
G.~Chartrand, L.~Lesniak, {Graphs \& Digraphs}, 3rd Edition, Chapman \&
  Hall/CRC, 1996.

\bibitem{Schr2003}
B.~S.~W. Schr\"{o}der, Ordered {S}ets. An {I}ntroduction, Birkh\"{a}user Basel,
  Boston, 2003.

\bibitem{DP2013AMH}
O.~Dovgoshey, E.~Petrov, {Weak similarities of metric and semimetric spaces},
  Acta Math. Hungar. 141~(4) (2013) 301--319.

\bibitem{Edg1992}
G.~Edgar, {Measure, Topology, and Fractal Geometry}, Springer-Verlag, New York,
  1992.

\bibitem{DovBBMSSS2020}
O.~Dovgoshey, Combinatorial properties of ultrametrics and generalized
  ultrametrics, Bull. Belg. Math. Soc. Simon Stevin 27~(3) (2020) 379--417.

\bibitem{DLAMH2020}
O.~Dovgoshey, J.~Luukkainen, Combinatorial characterization of pseudometrics,
  Acta Math. Hungar. 161~(1) (2020) 257--291.

\bibitem{Dov2019IEJA}
O.~Dovgoshey, {Semigroups generated by partitions}, Int. Electron. J. Algebra
  26 (2019) 145--190.

\bibitem{BDS2021pNUAA}
V.~Bilet, O.~Dovgoshey, R.~Shanin, Ultrametric preserving functions and weak
  similarities of ultrametric spaces, p-Adic Numbers Ultrametr. Anal. Appl.
  13~(3) (2021) 186--203.

\bibitem{Pet2018pNUAA}
E.~Petrov, {Weak similarities of finite ultrametric and semimetric spaces},
  p-Adic Numbers Ultrametr. Anal. Appl. 10~(2) (2018) 108--117.

\bibitem{Martin1977}
H.~W. Martin, Strongly rigid metrics and zero dimensionality, Proc. Am. Math.
  Soc. 67~(1) (1977) 157--161.

\bibitem{BDKP2017AASFM}
V.~Bilet, O.~Dovgoshey, M.~K\"{u}\c{c}\"{u}kaslan, E.~Petrov, {Minimal
  universal metric spaces}, Ann. Acad. Sci. Fenn. Math. 42~(2) (2017)
  1019--1064.

\bibitem{Janos1972}
L.~Janos, A metric characterization of zero-dimensional spaces, Proc. Amer.
  Math. Soc. 31~(1) (1972) 268--270.

\bibitem{Carlsson2010}
G.~Carlsson, F.~M\'{e}moli, {Characterization, stability and convergence of
  hierarchical clustering methods}, J.~Machine Learn. Res. 11~(3/1) (2010)
  1425--1470.

\bibitem{DLW}
E.~D. Demaine, G.~M. Landau, O.~Weimann, {On Cartesian Trees and Range Minimum
  Queries}, in: {Proceedings of the 36th International Colloquium, ICALP 2009,
  Rhodes, Greece, July 5-12, 2009, Part I}, Vol. 5555 of Lecture notes in
  Computer Science, Springer-Berlin-Heidelberg, 2009, pp. 341--353.

\bibitem{Fie}
M.~Fiedler, {Ultrametric sets in Euclidean point spaces}, Electronic Journal of
  Linear Algebra 3 (1998) 23--30.
\newblock \href {http://dx.doi.org/https://doi.org/10.13001/1081-3810.1012}
  {\path{doi:https://doi.org/10.13001/1081-3810.1012}}.

\bibitem{GV2012DAM}
V.~Gurvich, M.~Vyalyi, Characterizing (quasi-)ultrametric finite spaces in
  terms of (directed) graphs, {Discrete Appl. Math.} 160~(12) (2012)
  1742--1756.

\bibitem{HolAMM2001}
J.~E. Holly, {Pictures of ultrametric spaces, the p-{A}dic numbers, and valued
  fields}, Amer. Math. Monthly 108~(8) (2001) 721--728.

\bibitem{H04}
B.~Hughes, {Trees and ultrametric spaces: a categorical equivalence}, Adv.
  Math. 189~(1) (2004) 148--191.

\bibitem{BH2}
B.~Hughes, {Trees, ultrametrics, and noncommutative geometry}, Pure Appl. Math.
  Q. 8~(1) (2012) 221--312.

\bibitem{Lemin2003}
A.~J. Lemin, {The category of ultrametric spaces is isomorphic to the category
  of complete, atomic, tree-like, real graduated lattices \(\mathbf{LAT}^*\)},
  Algebra Universalis 50~(1) (2003) 35--49.
\newblock \href {http://dx.doi.org/10.1007/s00012-003-1806-4}
  {\path{doi:10.1007/s00012-003-1806-4}}.

\bibitem{Bestvina2002}
M.~Bestvina, {R-trees in topology, geometry and group theory}, in: R.~J.
  Daverman, R.~B. Sher (Eds.), {Handbook of Geometric Topology}, Nort-Holland,
  Amsterdam, 2002, pp. 55--91.

\bibitem{DDP2011pNUAA}
D.~Dordovskyi, O.~Dovgoshey, E.~Petrov, Diameter and diametrical pairs of
  points in ultrametric spaces, p-Adic Numbers Ultrametr. Anal. Appl. 3~(4)
  (2011) 253--262.

\bibitem{DP2019PNUAA}
O.~Dovgoshey, E.~Petrov, {Properties and morphisms of finite ultrametric spaces
  and their representing trees}, p-Adic Numbers Ultrametr. Anal. Appl. 11~(1)
  (2019) 1--20.

\bibitem{DPT2017FPTA}
O.~Dovgoshey, E.~Petrov, H.-M. Teichert, {How rigid the finite ultrametric
  spaces can be?}, Fixed Point Theory Appl. 19~(2) (2017) 1083--1102.

\bibitem{DPT2015}
O.~Dovgoshey, E.~Petrov, H.-M. Teichert, {On spaces extremal for the Gomory-Hu
  inequality}, p-Adic Numbers Ultrametr. Anal. Appl. 7~(2) (2015) 133--142.

\bibitem{DP2018pNUAA}
O.~Dovgoshey, E.~Petrov, {From isomorphic rooted trees to isometric ultrametric
  spaces}, p-Adic Numbers Ultrametr. Anal. Appl. 10~(4) (2018) 287--298.
\newblock \href {http://dx.doi.org/https://doi.org/10.1134/S2070046618040052}
  {\path{doi:https://doi.org/10.1134/S2070046618040052}}.

\bibitem{Dov2020TaAoG}
O.~Dovgoshey, Isomorphism of trees and isometry of ultrametric spaces, Theory
  and Applications of Graphs 7~(2), article 3.

\bibitem{BS2017}
J.~Beyrer, V.~Schroeder, {Trees and ultrametric m\"{o}bius structures}, p-Adic
  Numbers Ultrametr. Anal. Appl. 9~(4) (2017) 247--256.

\bibitem{DP2020pNUAA}
O.~Dovgoshey, E.~Petrov, {On some extremal properties of finite ultrametric
  spaces}, p-Adic Numbers Ultrametr. Anal. Appl. 12~(1) (2020) 1--11.

\bibitem{DKa2021}
O.~Dovgoshey, M.~K\"{u}\c{c}\"{u}kaslan, Labeled trees generating complete,
  compact, and discrete ultrametric spaces, arXiv:2101.00626v3 (2022) 1--23.

\bibitem{Dov2019pNUAA}
O.~Dovgoshey, {Finite ultrametric balls}, p-Adic Numbers Ultrametr. Anal. Appl.
  11~(3) (2019) 177--191.

\bibitem{DP2013SM}
O.~Dovgoshey, E.~Petrov, Subdominant pseudoultrametric on graphs, Sb. Math
  204~(8) (2013) 1131--1151.

\bibitem{PD2014JMS}
E.~Petrov, A.~Dovgoshey, {On the {G}omory-{H}u inequality}, J. Math. Sci.
  198~(4) (2014) 392--411, translation from Ukr. Mat. Visn. 10(4):469--496,
  2013.

\bibitem{BDK2021a}
V.~Bilet, O.~Dovgoshey, Y.~Kononov, Ultrametrics and complete multipartite
  graphs, arXiv:2103.09470v1 (2021) 1--14.

\end{thebibliography}

\end{document}